\theoremstyle{plain}
\newtheorem{theorem}{Theorem}
\newtheorem{lemma}[theorem]{Lemma}
\newtheorem{proposition}[theorem]{Proposition}
\newtheorem{corollary}[theorem]{Corollary}
\newtheorem{claim}[theorem]{Claim}
\theoremstyle{definition}
\newtheorem{definition}[theorem]{Definition}
\newtheorem{remark}[theorem]{Remark}
\numberwithin{theorem}{section}
\renewcommand{\phi}{\varphi}
\renewcommand{\tilde}{\widetilde}
\newcommand{\htop}{h_{\textrm{\tiny top}}}
\newcommand{\hmu}{h_{\textrm{\tiny Liouville}}}
\def\etc.{\emph{et\thinspace c.}}
\DeclareMathOperator{\Id}{Id}
\DeclareMathOperator{\dist}{\mathrm{d}}
\newcommand{\dd}{\mathop{}\!\mathrm{d}}
\def\R{\mathbb{R}}
\def\Mc{\mathcal{M}}
\let\setminus\smallsetminus
\let\emptyset\varnothing
\newcommand{\Lie}{\mathcal{L}}
\def\dfn{\mathbin{{:}{=}}}
\def\nfd{\mathbin{{=}{:}}}
\let\rest\restriction
\def\geodesic{\mathfrak c}
\begin{document}
\title{Orbit growth of contact structures after surgery}
\author{Patrick Foulon}
\address{Centre International de Rencontres Math\'ematiques, UMS 822, and Institut de Math\'ematiques de Marseille, UMR 7373, 13453 Marseille, France}
\email{foulon@cirm.univ-mrs.fr}
\author{Boris Hasselblatt}
\address{Department of Mathematics,Tufts University,
Medford, MA 02155,
USA}
\email{Boris.Hasselblatt@tufts.edu}
\thanks{Partially supported by the Committee on Faculty Research Awards of Tufts University}
\author{Anne Vaugon}
\address{Laboratoire de Math\'ematiques d'Orsay, Univ. Paris-Sud, CNRS, Universit\'e Paris-Saclay, 91405 Orsay, France}
\thanks{Partially supported by ANR QUANTACT}
\email{anne.vaugon@math.u-psud.fr}
\keywords{Anosov flow, 3-manifold, contact structure, Reeb flow, surgery, contact homology}
\begin{abstract}
Investigation of the effects of a contact surgery construction and of invariance of contact homology reveals a rich new field of inquiry at the intersection of dynamical systems and contact geometry. We produce contact 3-flows not topologically orbit-equivalent to any algebraic flow, including examples on many hyperbolic 3-manifolds, and we show how the surgery produces dynamical complexity for any Reeb flow compatible with the resulting contact structure. This includes exponential complexity when neither the surgered flow nor the surgered manifold are hyperbolic. We also demonstrate the use in dynamics of contact homology, a powerful tool in contact geometry.
\end{abstract}
\maketitle
\tableofcontents

\section{Introduction}\label{SIntro}
This paper is a sequel of~\cite{FHLegendrian} in which the authors decribed a surgery construction adapted to contact flows. This construction was originally conceived as a source of uniformly hyperbolic contact flows. However it turns out that the surgered flows exhibit more noteworthy dynamical properties than orginally observed and that interesting consequences of the surgery arise even when the initial or resulting flow are not hyperbolic. Thus the primary interest in this contact surgery may be as a rich source of contact flows exhibiting new phenomena from both dynamical and contact points of view.

The starting point of our surgery is the unit tangent bundle \(M\) of a surface of negative and (mainly, but not always necessarily) constant curvature equipped with its natural contact structures. This surgery is known to contact-symplectic topologists as a Weinstein surgery, and its desciption in~\cite{FHLegendrian} makes it easier to study dynamical properties (as opposed, for instance, to topological properties).

Our purpose is to expand the understanding of the dynamical effects achieved by the contact surgery from~\cite{FHLegendrian}  in 3 main directions: \begin{itemize}
\item we show that the complexity of the resulting flow exceeds that of the flow on which the surgery is performed (Theorems \ref{THMLargerTopEnt}, \ref{THMLargerSharpGrowth}, \ref{THMFHC}, and \ref{th_courbe_simple_polynom}),
\item we show that much of the complexity of the resulting flow is reflected in the cylindrical contact homology and is therefore realized in \emph{any} Reeb flow associated to the contact structure resulting from the surgery (Theorems \ref{THMFHC}, \ref{th_courbe_simple_entropie}, and \ref{th_courbe_simple_polynom}), and
\item we do this beyond the context of hyperbolic flows (in more than one way---Theorems \ref{th_courbe_simple_entropie} and \ref{th_courbe_simple_polynom}).
\end{itemize}
Taken together, this reveals a much richer field of inquiry at the interface between contact geometry and dynamical systems than was apparent when the surgery construction was conceived.

Contact homology and its growth rate are relevant tools to describe dynamical properties of all Reeb flows associated to a given contact structure. Even if it is not always explicit in the statements, they play a crucial role in the proofs of Theorems \ref{THMFHC}, \ref{th_courbe_simple_entropie}, and \ref{th_courbe_simple_polynom}. A goal of this paper is to demonstrate to dynamicists the use of these powerful tools from contact geometry.

In addition to the dynamical point of view, our study is also motivated by contact geometry as we want to investigate connections between growth properties in Reeb dynamics (generally characterized by the growth rate of contact homology) and the geometry of the underlying manifold.
The simplest model of such a connection is Colin and Honda's conjecture~\cite[Conjecture 2.10]{ColinHonda08}, and some surgeries under study give examples supporting it. Colin and Honda speculate that the number of Reeb periodic orbits of universally tight\footnote{see \cref{SDefinitionsNotations}} contact structures on hyperbolic manifolds grows at least exponentially with the period.
More generally, one may look for sources of exponential or polynomial behavior of contact homology. Our starting point, the unit tangent bundle of an hyperbolic surface, is a transitional example as it carries two special contact structures, one with an exponential growth rate for contact homology and one with a polynomial growth rate. We prove (Theorems \ref{th_courbe_simple_entropie} and \ref{th_courbe_simple_polynom}) that some surgeries lead to two coexisting contact forms on the surgered manifold with exponential and polynomial growth rates and therefore give new examples of transitional manifolds with respect to growth rate.
Note that these examples do not include hyperbolic manifolds (and are therefore compatible with Colin and Honda's conjecture).

The principal results in this article were obtained in 2014, and we here integrate it with work by others that was done contemporaneously \cite{Alves1,Alves2,Alves3}.

\subsubsection*{Structure of the paper}
In \cref{SResults} we cover the background material and present our main results. Specifically, \cref{SBSNewContact} presents and elaborates our earlier results \cite{FHLegendrian}, and \cref{SBSClosedOrbits} introduces the resulting complexity increase of the surgered geodesic flow. \cref{SBSClosedOrbitsReeb} finally describes how cylindrical contact homology forces complexity of Reeb flows with the same contact structure, introduces our surgery on the fiber flow, and discusses the relation of our results to other works on contact surgery and Reeb dynamics.

The construction of contact surgery is recalled in \cref{SOutlineSurgery}, which also contains some preliminary results on the dynamics of the surgered flow and the proof of \cref{THMLargerTopEnt}.

In \cref{SECContHom} we define contact homology and its growth rate. This enables us to prove \cref{THMFHC} in \cref{SECOrbitGrowthFHC}, \cref{th_courbe_simple_entropie} in \cref{SECOrbitGrowth_simple_geodesic} and \cref{th_courbe_simple_polynom} in \cref{SMoreContactFlows}.

\subsubsection*{Acknowledgements}
We thank Marcelo Alves, Fr\'ed\'eric Bourgeois, Patrick Massot and Samuel Tapie for useful discussions and helpful advice. Boris Hasselblatt is grateful for the support of the ETH, which was important as we finalized this work.

\section{The results}\label{SResults}

\subsection{Definitions and notations}\label{SDefinitionsNotations}

A manifold is said to be \emph{closed} if it is compact and has no boundary.

A $C^\infty$ 1-form $\alpha$ on a 3-manifold $M$ is called a \emph{contact form} if\/ $\alpha\wedge \dd \alpha$ is a volume form. The associated plane field $\xi\dfn\ker \alpha$ is
a cooriented \emph{contact structure}, and \((M,\xi)\) is called a contact manifold..
The geometric object under study in contact geometry is the contact structure (as opposed to the contact form).
Note that for a given contact structure $\xi=\ker(\alpha)$ the contact forms with kernel\/ $\xi$ are exactly the forms $f\alpha$ where $f\in\mathcal C^\infty(M, \mathbb R\setminus\{0\})$. Additionally, if\/ $\alpha\wedge \dd \alpha$ is a volume form then $f\alpha\wedge \dd(f\alpha)$ is also a volume form for any $f\in\mathcal C^\infty(M, \mathbb R\setminus\{0\})$.
A curve tangent to $\xi$ is said to be \emph{Legendrian}.

The \emph{Reeb vector field} associated to a contact form $\alpha$ is the vector field $R_\alpha$ such that $\iota_{R_\alpha}\alpha=1$ and
$\iota_{R_\alpha}\dd \alpha=0$. Its flow is called the \emph{Reeb flow} (and it preserves \(\alpha\) because \(\Lie_{R_\alpha}\alpha=\iota_{R_\alpha}\dist \alpha=0\)). Note that the Reeb vector field is associated to a contact form $\alpha$: if we consider another contact form $\alpha'=f\alpha$ where $f\in\mathcal C^\infty(M,\mathbb R\setminus\{0\})$, then $\dd\alpha'=\dd f\wedge\alpha+f\dd\alpha$ and the condition $\iota_{R_{\alpha'}}\dd \alpha'=0$ implies that $R_{\alpha}$ and $R_{\alpha'}$ are not collinear unless $f$ is constant. \emph{A} Reeb field on a contact manifold \((M,\xi)\) is the Reeb field of any contact form \(\alpha\) with \(\xi=\ker\alpha\). By Libermann's Theorem \cite{Libermann} on contact Hamiltonians (see for instance \cite[Theorem 2.3.1]{Geiges}), these are exactly the nowhere-vanishing vector fields transverse to $\xi$ whose flows preserve~$\xi$.

A Reeb vector field (or the associated contact form) is said to be \emph{non\-degenerate} if all periodic orbits are non\-degenerate (1 is not an eigenvalue of the differential of the Poincar\'e map). A Reeb vector field (or the associated contact form or the associated contact structure) is said to be \emph{hypertight} if there is no contractible periodic Reeb orbit.
One can always perturb a contact form into a non\-degenerate contact form. Hypertightness is much more restrictive.

Contact structures on $3$-manifolds can be divided into two classes: tight contact structures and overtwisted contact structures. This fundamental distinction is due to Eliashberg~\cite{Eliashberg89} following Bennequin~\cite{Bennequin83}. Tight contact structures are the contact structures that reflect the geometry of the manifolds and this article focuses on them.
A contact structure $\xi$ is said to be \emph{overtwisted} if there exists an embedded disk tangent to $\xi$ on its boundary. Otherwise $\xi$ is said to be \emph{tight}. \emph{Universally tight} contact structures are those with a tight lift to the universal cover. Universally tight and hypertight~\cite{Hofer} contact structures are always tight. All the contact structures considered in this paper are hypertight and therefore tight.

We recall from \cite{FHLegendrian} a contact surgery on a Legendrian curve $\gamma\in S\Sigma$ derived from a closed geodesic \(\geodesic\) on a hyperbolic surface \(\Sigma\). This corresponds to a $(1,-q)$ Dehn-surgery and results in a new manifold $M_S$ with a contact form $\alpha_A$. The construction is presented in \cref{SOutlineSurgery}. The Reeb flow $R_{\alpha_A}$ is Anosov if\/ $q$ is positive---and only then (\cref{PRPNotAnosovIfq<0}).

\begin{definition}[\cite{KatokHasselblatt}]\label{DEFAnosov}
Let $M$ be a manifold and $\varphi\colon\R\times M\to M$ a smooth flow with nowhere vanishing generating vector field \(X\). Then $\varphi$ (and also \(X\)) is said to be an Anosov flow if the tangent bundle $TM$ (necessarily invariantly) splits as $TM=\R X\oplus E^+\oplus E^-$ (the \emph{flow}, \emph{strong-unstable} and \emph{strong-stable directions}, respectively), in such a way that there are constants $C>0$ and $\eta>1>\lambda>0$ for which
\begin{equation}
\label{AnosovCondition}
\bigl\|D\varphi^{-t}\rest{E^+}\bigr\| \le C\eta^{-t}
\quad\text{and}\quad
\bigl\|D\varphi^t\rest{E^-}\bigr\| \le C\lambda^t
\end{equation}
for $t>0$. The \emph{weak-unstable} and \emph{weak-stable} bundles are
$\R X\oplus E^+$ and $\R X\oplus E^-$, respectively. ($E^\pm$ are then tangent to continuous
foliations $W^\pm$ with smooth leaves.)
\end{definition}

An Anosov flow on a 3-manifold is said to be of \emph{algebraic type} if it is finitely covered by the geodesic flow of a surface of constant negative curvature or the suspension of a diffeomorphism of the 2-torus, and it is called a \emph{contact Anosov flow} if it is a Reeb flow, in which case $E^+\oplus E^-$ is the contact structure and $\alpha$ is said to be Anosov as well. Geodesic flows of Riemannian manifolds with negative sectional curvature are Anosov flows. For surfaces of constant negative curvature it is easy to verify the defining property directly, and we do so at the start of \cref{SIMoreContactFlows}.

In this paper, we show that the complexity of the resulting flow exceeds that of the flow on which the surgery is performed. We measure the complexity of the flow of\/ $X$ via its orbit growth, entropy and cohomological pressure. For a contact form $\alpha$, a free homotopy class $\rho$ and $T>0$, we denote by $N_T^\rho(\alpha)$ the number of\/ $R_\alpha$-periodic orbits in $\rho$ with period smaller than $T$ and $N_T(\alpha)$ the number of\/ $R_\alpha$-periodic orbits with period smaller than $T$. The \emph{orbit growth} of\/ $R_\alpha$ (or the associated flow) is the asymptotic behavior of\/ $N_T(\alpha)$, its exponential growth rate is the topological entropy. We summarize the needed notions and facts in \cref{SBSEntropy}. Cohomological pressure drives orbit growth in a given homology class and is defined in \cref{SGrowthHomologyClass}.

\subsection{New contact flows}\label{SBSNewContact}
We begin with a paraphrase of the main result of the surgery construction from \cite{FHLegendrian} in a way that points to the broader perspective of the present work and make a few initial observations that go further.
\begin{theorem}[{\cite[Theorems 1.6, 1.9]{FHLegendrian}}]\label{THMMain}
On the unit tangent bundle \(M\) of a negatively curved surface, there is a
family of smooth Dehn surgeries, including the
Handel--Thurston surgery, that produce new contact flows. The surgered geodesic flow has the following properties:
\begin{enumerate}
\item
It acts on a manifold that is not a unit tangent bundle.
\item\label{itemNotUTB}
If it is Anosov, it is not ortibt equivalent to an algebraic Anosov flow.
\item\label{itemBMMnotVolume}
If it is Anosov, then its topological and volume entropies differ, or, equivalently, the measure of maximal entropy is always singular \cite{Foulon}.
\item
If it is Anosov and if the surgered manifold is hyperbolic, then non empty free homotopy class $\rho$ of closed orbits is infinite and is an isotopy class\rlap,\footnote{Each closed orbit is related to at most finitely many others by the pair being the boundary of an embedded cylinder \cite{BarthelmeFenley}. (This relation is neither transitive nor reflexive.) For comparison, isotopy only ensures that the circles in question are the boundary components of an \emph{immersed} cylinder.} moreover, there exist $a_1, c_1,a_2,c_2>0$ such that
\[\frac{1}{a_2}\ln(T)-c_2\leq N_T^\rho(\alpha_A)\leq a_1\ln(T)+c_1\]
for all\/ $T>0$, where $\alpha_A$ is the contact form defined on the surgered manifold. \cite[Theorem A]{Fenley}, \cite[Remark 5.1.16, Theorem 5.3.3]{BarthelmeDiss}, \cite{BarthelmeFenley},  \cite[Theorem~F]{BarthelmeFenley2}.
\end{enumerate}
\end{theorem}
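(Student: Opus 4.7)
The statement collects four rather different conclusions, so the plan is to address each in turn, exploiting different parts of the surgery picture recalled in \cref{SOutlineSurgery}.

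For items (1) and (2), I would start from the explicit description of the \((1,-q)\) Dehn surgery along a Legendrian lift of a closed geodesic. A Mayer--Vietoris computation produces the first homology and fundamental group of the surgered manifold \(M_S\); comparing with the standard description of \(\pi_1\) of a Seifert fibered unit tangent bundle shows that \(M_S\) is not such a bundle, settling (1). For (2), recall that an algebraic Anosov flow lives, up to finite cover, on a unit tangent bundle of a hyperbolic surface or on a mapping torus of a hyperbolic toral automorphism; by (1) our \(M_S\) is of neither type, and Ghys's rigidity theorem on smoothness of the weak foliations of algebraic flows rules out orbit equivalence.

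For item (3), the approach is via the well-known criterion that for a contact Anosov flow the topological entropy \(\htop\) coincides with the volume entropy if and only if the unstable Jacobian is smoothly cohomologous to a constant (equivalently, Liouville measure is the measure of maximal entropy). A direct computation near the surgery tube---using the non-triviality of the gluing map and the fact that the original geodesic flow already has a cohomologically trivial Jacobian---produces a cohomological obstruction, so Liouville measure fails to maximize entropy, as established in \cite{Foulon}. For item (4), assuming the surgered manifold is hyperbolic, I would invoke Fenley's structural result that in hyperbolic 3-manifolds a non-empty free homotopy class of closed orbits of a transitive Anosov flow is infinite, together with the Barthelme--Fenley identification of each such class with an isotopy class; the logarithmic upper and lower bounds on \(N_T^\rho(\alpha_A)\) then follow from the quantitative growth estimates in \cite[Theorem F]{BarthelmeFenley2}.

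The principal obstacle is item (3): extracting a clean cohomological obstruction requires careful control of the contribution of the gluing map to the strong foliations and hence to the unstable Jacobian. Items (1), (2), and (4) largely reduce to applying known topological and dynamical results once the surgery is placed in the appropriate framework.
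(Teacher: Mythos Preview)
This theorem is not proved in the paper; it is a summary of results imported from \cite{FHLegendrian}, \cite{Foulon}, \cite{Fenley}, \cite{BarthelmeDiss}, \cite{BarthelmeFenley}, and \cite{BarthelmeFenley2}, with the citations indicated item by item. So the relevant question is whether your outline matches those sources.

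For items (1) and (4) your sketch is in line with the cited arguments. For item (2), the core idea---\(M_S\) is not homeomorphic to a manifold supporting an algebraic Anosov flow---is correct, but invoking Ghys's rigidity is a red herring: topological orbit equivalence already entails a homeomorphism of the underlying manifolds, so once (1) (and the analogous check for torus bundles) is established, (2) follows without any foliation-smoothness input.

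The genuine gap is item (3). You propose to extract a cohomological obstruction to the unstable Jacobian being a coboundary by analyzing the gluing map directly, and you flag this as the hard part. That is not how the result is obtained, and it would indeed be delicate. The argument in \cite{Foulon} is a rigidity theorem: for a contact Anosov flow on a closed 3-manifold, \(\htop=\hmu\) forces the flow to be (smoothly conjugate to) an algebraic one. Item (3) then follows by contraposition from item (2): since the surgered flow is not orbit equivalent to any algebraic flow, Foulon's theorem gives \(\htop\neq\hmu\) immediately. This is exactly how the paper uses item (3) later (see \cref{REMBishopHughesVinhageYang} and the proof of \cref{THMLargerTopEnt}, where the strict inequality is attributed to ``contraposition of a rigidity result \cite{Foulon}''). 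Your direct approach, if it could be made to work, would give effective control on the entropy gap---something the rigidity route does not---but as written it is an unresolved obstacle rather than a proof.
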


That these surgeries produce contact flows on hyperbolic manifolds is a corollary of the two following theorems.

\begin{theorem}[Thurston~{\cite[Theorem 5.8.2]{ThurstonPrinceton, ThurstonBAMS}}, Petronio and Porti~{\cite{PetronioPorti}}]
For all but infinitely many slopes, Dehn filling a hyperbolic $3$-manifold gives rise to a hyperbolic manifold.
\end{theorem}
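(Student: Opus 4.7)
The plan is to derive this from Thurston's hyperbolic Dehn surgery theorem via the deformation space of hyperbolic structures on the cusped complement. First I would fix the unique complete hyperbolic structure on the interior of the given $3$-manifold $N$ (available by Mostow--Prasad rigidity), assuming for concreteness it has $k\ge 1$ torus cusps, each carrying parabolic holonomy on its cross-sectional torus subgroup.

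The first main step is to parametrize nearby deformations. Analysing the $\mathrm{PSL}(2,\C)$-character variety of $\pi_1(N)$ near the complete discrete faithful representation, one shows (following Thurston and Neumann--Zagier) that the space of nearby, possibly incomplete, hyperbolic structures on $N$ is a smooth complex manifold of complex dimension $k$, with coordinates $(u_1,\dots,u_k)$; here $u_i$ is the complex length of a chosen meridian on the $i$-th cusp, and $u=0$ corresponds to the complete structure.

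Next, for each prescribed integer surgery slope $(p_i,q_i)$ on the $i$-th cusp, there is a holomorphic Dehn-surgery equation
\[p_i\, u_i + q_i\, v_i(u_i) \;=\; 2\pi\sqrt{-1},\]
where $v_i(u_i)$ is the complex length of the corresponding longitude, a holomorphic function of $u_i$. This equation encodes when the incomplete structure closes up smoothly across the core of the new solid torus. I would then show it has a solution $u_i$ near $0$ whenever $(p_i,q_i)$ has sufficiently large length in the flat metric on the cusp cross-section, and that the resulting metric completion is a smooth complete hyperbolic structure on the Dehn-filled manifold.

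Finiteness of the exceptional slopes on each cusp then follows because only finitely many primitive $(p_i,q_i)\in\Z^2$ have length bounded by any given constant in that flat metric. The main obstacle is the smoothness of the metric completion across the core of the glued solid torus: this is the delicate analytic step and relies on a careful choice of meridian--longitude frame realizing the surgery slope together with control of the developing map near the cusp. The Petronio--Porti extension covers the setting where $N$ has additional cusps meant to remain unfilled: one runs the same deformation argument only on the cusps being filled, and verifies that the unfilled cusps persist as parabolic ends along the resulting family of hyperbolic structures, so that the output is a manifold with finitely many geometric ends, all of the expected type.
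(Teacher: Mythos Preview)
The paper does not prove this theorem; it is quoted as a background result with attribution to Thurston and to Petronio--Porti, and no argument is supplied. There is therefore nothing in the paper to compare your proposal against.

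That said, your sketch is a faithful outline of the standard deformation-theoretic proof via the $\mathrm{PSL}(2,\C)$-representation variety and the Dehn-surgery equations $p_i u_i + q_i v_i = 2\pi\sqrt{-1}$. One correction concerns your characterization of the Petronio--Porti contribution: their paper is not principally about allowing some cusps to remain unfilled (Thurston's original treatment already handles multiple cusps, filled or not). Rather, Petronio--Porti provide a complete and self-contained proof of the Dehn filling theorem using ideal triangulations, possibly with negatively oriented tetrahedra, thereby closing gaps and supplying full analytic details that are only sketched in Thurston's Princeton notes. You might also note that the statement as printed in the paper contains an evident typo---it should read ``for all but \emph{finitely} many slopes''---which you have silently corrected in your argument.
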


\begin{theorem}[Folklore {\cite[Theorem 1.12]{FHLegendrian}}]\label{THMhypknotcomplement}
Suppose $\Sigma$ is a hyperbolic surface, $\pi\colon S\Sigma\to\Sigma$ its
unit tangent bundle, $\gamma\colon S^1\to M$ continuous such that
$\geodesic\dfn\pi\circ\gamma$ is a closed geodesic that is not the same geodesic
traversed more than once and such that $\ell\cap \geodesic\neq\emptyset$ whenever
$\ell$ is a noncontractible closed curve. Then
$S\Sigma\setminus(\gamma(S^1))$ is a hyperbolic manifold.
\end{theorem}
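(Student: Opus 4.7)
The plan is to apply Thurston's hyperbolization theorem for Haken 3-manifolds to the knot exterior $N$, obtained from $S\Sigma$ by removing an open tubular neighborhood of $\gamma(S^1)$. What I would verify is that $N$ is compact, orientable, irreducible, with incompressible torus boundary, atoroidal, and not Seifert fibered.

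First I would dispense with the foundational properties. Since $S\Sigma$ is a circle bundle over the aspherical surface $\Sigma$, it is aspherical and hence irreducible. The closed geodesic $\geodesic$ is noncontractible in $\Sigma$, so $\gamma$ is noncontractible in $S\Sigma$. Any embedded $2$-sphere in $N$ bounds a $3$-ball in $S\Sigma$, and this ball cannot contain $\gamma(S^1)$ without forcing $\gamma$, and therefore $\geodesic$, to be null-homotopic; so the ball lies in $N$. Likewise a compressing disk for $\partial N$ would witness $\gamma$ as null-homotopic, which is excluded.

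The core step is atoroidality. Given an essential torus $T \subset N$, I would appeal to the standard theorem on incompressible surfaces in Seifert fibered spaces to isotope $T$ into either vertical position $T = \pi^{-1}(c)$ for a simple closed curve $c \subset \Sigma$, or horizontal position as a finite cover of $\Sigma$. The horizontal case is excluded because $\Sigma$ has genus at least $2$, so no finite cover of $\Sigma$ is a torus. In the vertical case, disjointness of $T$ and $\gamma(S^1)$ means $c$ is disjoint from $\geodesic$; by the filling hypothesis $c$ must then be contractible, bounding a disk $D \subset \Sigma$. Since the closed geodesic $\geodesic$ is noncontractible, it cannot lie in $D$, so the solid torus $\pi^{-1}(D)$ lies entirely in $N$ and furnishes a compression of $T$, contradicting essentialness.

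Finally I would rule out $N$ being Seifert fibered using uniqueness up to isotopy of the Seifert fibration of $S\Sigma$ (for $\Sigma$ of genus at least $2$): any Seifert fibration of $N$ would extend across $\partial N$ to one of $S\Sigma$ in which $\gamma$ is a fiber, but $\gamma$ is everywhere transverse to the $SO(2)$-orbits that constitute the standard (and only) fibration, so it cannot be a fiber. Thurston's hyperbolization theorem then supplies a complete finite-volume hyperbolic structure on the interior of $N$. The principal obstacle is the careful execution of the vertical/horizontal normalization step, since the classical theorem applies in the ambient Seifert fibered space: one must either verify that $T$ remains incompressible in $S\Sigma$ (any compressing disk would have to meet $\gamma$ essentially) or invoke a relative version of the normalization for tori in the complement of a knot transverse to the fibration.
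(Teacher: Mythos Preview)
The paper does not prove this statement; it is labeled ``Folklore'' and attributed to \cite[Theorem~1.12]{FHLegendrian}, so there is no in-paper argument to compare against. Your route through Thurston's hyperbolization is the standard one, and the irreducibility and boundary-incompressibility checks are correct.

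The atoroidality step is where the real work lies, and you rightly flag the obstacle without resolving it. The vertical/horizontal normalization of an incompressible torus is an isotopy in the ambient space $S\Sigma$; once you have moved $T$ to $\pi^{-1}(c)$ it may well cross $\gamma$, and disjointness of the \emph{original} $T$ from $\gamma$ forces only the algebraic intersection $\hat\imath(c,\geodesic)=0$, which does not contradict the filling hypothesis. Your suggestion (a), that $T$ stays incompressible in $S\Sigma$, does not by itself keep the normalization isotopy inside $N$; and even that incompressibility needs its own argument, since a priori $T$ could bound a solid torus in $S\Sigma$ containing $\gamma$ as a non-core knot (making $T$ essential in $N$ yet compressible in $S\Sigma$). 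What is actually required is a statement to the effect that an essential torus in the complement of a knot \emph{transverse to the Seifert fibration} can be isotoped \emph{within the complement} to a vertical torus---true, but not the classical Waldhausen theorem. A parallel omission appears in the Seifert-fibered exclusion: a Seifert structure on $N$ extends across the meridian filling only when the fiber slope on $\partial N$ differs from the meridian, and the residual case must be dispatched separately (for instance by observing that $\pi_1(S\Sigma)$ would then coincide with the base-orbifold group of $N$, forcing its center to be trivial rather than infinite cyclic).
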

Nonetheless, there exist infinitely many closed orientable hyperbolic manifolds of dimension $3$ which do not support an Anosov flow \cite[Theorem A]{RobertsShareshianStein}.
Additionally, since there are only finitely many homotopy classes of tight contact structures on a 3-manifold \cite[Th\'eor\`eme 1]{ColinGirouxHonda} and the contact structures with an Anosov Reeb flow are tight as they are hypertight (\cite{PlanteThurston}, \cite[p.~18]{BarbotHDR}), there exist only finitely many homotopy classes of contact Anosov flows on a given $3$-manifold. On hyperbolic $3$-manifolds the same goes for isotopy classes \cite[Th\'eor\`eme 2]{ColinGirouxHonda}. We do not know if the surgery from \cref{THMMain} can produce different contact structures on the same manifold.

\begin{remark}
The dynamical properties of the flow after surgery differ from the properties of Anosov algebraic flows. Indeed, for algebraic flows, free homotopy classes of closed orbits are finite. For geodesic flows no two (parametrized) orbits are homotopic, though rotating the tangent vector through $\pi$ isotopes each to its flip, which has the same \emph{image} as another orbit (the same geodesic run backwards), and only in suspensions are all free homotopy classes of \emph{images} of orbits singletons \cite[Corollary 4.3]{BarthelmeFenley2}.
\end{remark}

Our surgery corresponds to a $(1,-q)$-Dehn surgery and produces Anosov Reeb flows for $q>0$. As part of our study focuses on the $q<0$-case, it is important to note the following.

\begin{proposition}
Some surgeries from \cref{THMMain} produce flows that are not Anosov (\cref{PRPNotAnosovIfq<0}).
\end{proposition}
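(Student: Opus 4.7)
The plan is to exhibit a periodic orbit of the Reeb flow $R_{\alpha_A}$ on $M_S$ whose linearized Poincar\'e return map fails to be hyperbolic. Because $R_{\alpha_A}$ preserves $\dd\alpha_A$, the restriction of $\dd\alpha_A$ to the contact plane $\xi=\ker\alpha_A$ is an invariant area form along any periodic orbit, so the linearized return map lies in $\mathrm{SL}(2,\R)$; hyperbolicity is equivalent to its trace having absolute value strictly greater than $2$. Producing a periodic orbit with trace in $[-2,2]$ therefore directly contradicts \cref{DEFAnosov}, since the hyperbolic splitting along such an orbit would force eigenvalues $\lambda,1/\lambda$ with $|\lambda|\neq 1$.

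The concrete construction is localized: $\alpha_A$ equals the canonical contact form of the geodesic flow outside a tubular neighborhood $V$ of the Legendrian curve $\gamma$, and inside $V$ it is the explicit modification recalled in \cref{SOutlineSurgery} implementing the $(1,-q)$-Dehn surgery. In coordinates on $V$ adapted to the stable/unstable splitting of the original geodesic flow along $\gamma$, the modification introduces a shear whose sign is tied to $\sign(-q)$. For $q>0$ this shear cooperates with the hyperbolic stretching, and the invariant cone field survives (this is what is proved in \cite{FHLegendrian}); for $q<0$ the shear is in the opposite direction, and one expects orbits trapped in the re-glued solid torus to have linearizations that are products of the diagonal hyperbolic part with a twist of the wrong sign, whose trace can enter $[-2,2]$.

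To make this rigorous I would analyze the modified Reeb field inside $V$ and locate closed Reeb orbits that stay inside $V$. The surgery is constructed so that $R_{\alpha_A}$ is tangent to an invariant family of tori in the re-glued region, parametrized by a radial coordinate. On each such torus the Reeb flow is a linear flow with a rotation number depending smoothly on the radial coordinate and on $q$; closed orbits occur precisely at rational values. I would then compute the Floquet multipliers of these closed orbits as a function of $q<0$ and of the chosen torus, and verify that for appropriate $q$ and an appropriate torus the resulting trace satisfies $|\tau|\le 2$.

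The main obstacle is the last step: controlling the rotation function and the associated trace simultaneously, and ensuring that the closed orbit singled out is a genuine closed orbit of $R_{\alpha_A}$ rather than an artifact of an approximation of the Reeb field. Once an actual trapped closed Reeb orbit with non-hyperbolic linearization is exhibited, \cref{DEFAnosov} is violated and the proposition follows.
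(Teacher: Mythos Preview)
Your approach has a genuine gap: there are no periodic Reeb orbits trapped inside the surgery neighborhood. In the coordinates $(t,s,w)\in(-\eta,\eta)\times S^1\times(-\epsilon,\epsilon)$ used for the surgery on the geodesic flow, the Reeb field $R_{\alpha_A}$ is a positive time-change of $X_{\mathit{HT}}=\partial/\partial t$ (see \eqref{eqXh}), so every orbit enters near $t=-\eta$ and exits near $t=\eta$; the $t$-coordinate lives on an interval, not on a circle. Your claim that ``$R_{\alpha_A}$ is tangent to an invariant family of tori in the re-glued region'' is therefore false here---you seem to be importing the picture from the fiber-flow surgery of \cref{SMoreContactFlows}, where the $t$-coordinate \emph{is} a circle and the tori $p_S^{-1}(w)$ are indeed Reeb-invariant. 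In the present setting $\Omega$ remains a genuine flow-box for $R_{\alpha_A}$ even after the cut-and-shear, so the closed orbits you propose to analyse do not exist, and the trace computation has nothing to act on.

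The paper's argument is global and cone-theoretic rather than orbit-by-orbit. For $-q/\epsilon$ large, the shear $DF$ carries the half-cone $0\le a\le Kb$ around $e^+$ past the stable direction $e^-$ into the half-plane $a\le -Kb\le 0$, and the first-return map to the surgery annulus then sends this into the \emph{opposite} half $0\ge a\ge Kb$ of the original cone. This flip is incompatible with any continuous invariant cone field that must agree with the geodesic-flow cones at points whose orbits miss the surgery annulus, hence incompatible with the Anosov property. No specific periodic orbit is produced or needed. If you wish to rescue a linearization argument, you would have to track a periodic orbit that crosses the surgery annulus and returns through the ambient dynamics on $M_S$, not one confined to the surgery neighborhood; controlling such an orbit and its monodromy is substantially harder than the cone argument.
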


In the case $q=1$, this surgery is the standard Weinstein surgery as defined by Weinstein~\cite{Weinstein91} in 1991 simplifying Eliashberg's work~\cite{Eliashberg90} of 1990 (see \cite[Chapter 6]{Geiges} for more details). The surgery $(1,q)$ for any $q$ can be deduced from this construction. A direct construction for any $q$ using Giroux theory of convex surfaces can be found in \cite{DingGeiges}.

In answer to a question of Serge Troubetzkoy we here note:
\begin{proposition}
There are analytic Anosov flows as described in \cref{THMMain}.
\end{proposition}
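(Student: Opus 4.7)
The plan is to obtain an analytic example by analytic approximation of the smooth Anosov contact form $\alpha_A$ produced in \cref{THMMain}, combined with structural stability of Anosov flows.

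First, the surgered 3-manifold $M_S$ is closed and smooth, so by the classical theorems of Whitney, Morrey, and Grauert it carries a compatible real-analytic structure (unique in this dimension up to analytic diffeomorphism). On such an analytic manifold, Grauert's approximation theorem ensures that the space of real-analytic differential forms is $C^k$-dense in the space of smooth differential forms for every $k$. Apply this to select a real-analytic 1-form $\alpha_\omega$ that is $C^2$-close to $\alpha_A$. Then $\alpha_\omega \wedge \dist\alpha_\omega$ is $C^1$-close to $\alpha_A \wedge \dist\alpha_A$, a nowhere-vanishing volume form, so $\alpha_\omega$ is itself contact.

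Next, I would check that the Reeb flow is analytic and close to $R_{\alpha_A}$. The Reeb field is determined pointwise by the algebraic equations $\iota_R\alpha=1$ and $\iota_R\dist\alpha=0$, whose solution is a rational expression in the coefficients of $\alpha$ and $\dist\alpha$ with denominator the nowhere-vanishing volume form $\alpha \wedge \dist\alpha$. Consequently $R_{\alpha_\omega}$ is real-analytic and $C^1$-close to $R_{\alpha_A}$.

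Finally, the Anosov condition \eqref{AnosovCondition} is $C^1$-open, so $R_{\alpha_\omega}$ is Anosov; by structural stability it is topologically orbit-equivalent to $R_{\alpha_A}$ and hence inherits the conclusions of \cref{THMMain}---the underlying manifold is still not a unit tangent bundle, and the flow is not orbit-equivalent to any algebraic Anosov flow. The principal technical point is the analytic approximation of smooth differential forms on an analytic 3-manifold, which is furnished by Grauert's theorem; the openness of the contact and Anosov conditions together with the algebraic description of the Reeb field then do the rest.
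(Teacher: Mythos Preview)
Your argument is correct and follows exactly the same line as the paper's proof, which simply notes that the smooth contact form can be approximated by analytic ones and that both the contact property and the Anosov property of the Reeb flow are open. Your version is a careful unpacking of that two-sentence sketch, supplying the references (Whitney--Morrey--Grauert for the analytic structure and approximation) and the algebraic dependence of the Reeb field on the form that justify each step.
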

\begin{proof}
The contact form is smooth and can hence be approximated by analytic ones.
The contact property of the form and the Anosov property of its Reeb flow are open.
\end{proof}

\begin{remark}
Another perspective on the connection with the Handel--Thurston construction is that our result implies in particular that the Handel--Thurston examples are topologically orbit-equivalent to contact flows.
\end{remark}

\begin{remark}\label{REMBernoulli}
For context we recall here that contact Anosov flows have the \emph{Bernoulli} property \cite{KatokBurns,OrnsteinWeiss,ChernovHaskell} and exponential decay of correlations \cite{Liverani}. The Bernoulli property and the Ornstein Isomorphism Theorem \cite{Ornstein} imply that the flows we obtain from our surgery are measure-theoretically isomorphic to the original contact Anosov flow up to a constant rescaling of time, the constant being the ratio of the Liouville entropies. (This answers a question of Vershik.)
\end{remark}

\subsection{Production of closed orbits for contact Anosov flows}\label{SBSClosedOrbits}

\subsubsection{Impact on entropy}\label{SBSEntropy}
We continue with new results about the features of the contact Anosov flows from \cite{FHLegendrian} to the effect that the surgery of \cref{THMMain} produces ``exponentially many'' closed orbits. We preface these statements by a brief summary of the needed notions and facts pertinent to entropy.
\begin{itemize}
\item The \emph{topological entropy} of an Anosov flow (or of the vector field that generates it) equals the exponential growth rate of the number of periodic orbits; in our case this means that \(\htop(R_\alpha)=\lim_{T\to\infty}\frac1T\log N_T(\alpha)\).
\item The entropy \(h_\mu(\varphi^t)\) of a flow \(\varphi^t\) with respect to an invariant Borel probability measure \(\mu\) (also referred to as the entropy of \(\mu\) with respect to \(\varphi^t\)) does not exceed the topological entropy of \(\varphi^t\).\footnote{Indeed, the topological entropy is the supremum of the entropies of invariant Borel probability measures (Variational Principle).}
\item If a flow-invariant Borel probability measure \(\mu\) is absolutely continuous with respect to a smooth volume, then we say it is a Liouville measure and write \(\hmu\dfn h_\mu\).
\item For the geodesic flow \(g^t\) of a surface we have \(\hmu(g^t)=\htop(g^t)\) if (and only if \cite{Foulon,KatokEntClGeod,KatokConformal}) the curvature is constant.
\item Scaling of time: if \(s\in(0,\infty)\), then \(\hmu(sX)=s\hmu(X)\) and  \(\htop(sX)=s\htop(X)\).
\item More generally, there is Abramov's formula: the entropy of a time change \(gX\) of a nonzero vector field \(X\) with respect to a  \(gX\)-invariant probability measure \(\mu_g\)  canonically associated with an \(X\)-invariant Borel probability measure \(\mu\) is
\begin{equation}\label{eqAbramov}
h_{\mu_g}(gX)=h_\mu(X)\int g\dd\mu.
\end{equation}
This means that comparisons of the intrinsic dynamical complexity of these vector fields are meaningful only when \(\int g=1\).
\item Pesin entropy formula \cite{BarreiraPesin}: For a volume-preserving flow \(\varphi^t\) with 1-di\-men\-sio\-nal expanding direction, \(\hmu(\varphi^t)\) equals the positive Lyapunov exponent of the flow \cite{BarreiraPesin}, \cite[Definition S.2.5]{KatokHasselblatt}, which is (a.e.) defined as the exponential growth rate of unstable vectors under the flow
and as a function of time.
\end{itemize}
\begin{theorem}\label{THMLargerTopEnt}
If\/ $\psi^t$ is a contact Anosov flow obtained from the geodesic flow $g^t$ of a compact oriented surface of constant negative curvature by the surgery in \cref{THMMain} (generated by the vector field in \eqref{eqDEFXh}), then its topological entropy is strictly larger. Indeed, $\htop(\psi^t)>\hmu(\psi^t)\ge\hmu(g^t)=\htop(g^t)$.

Since $\htop$ measures the exponential growth rate of periodic orbits of a hyperbolic dynamical system, the number $N_T(\psi^t)$ of\/ $\psi^t$-periodic orbits of period $t\le T$ (of up to a given length) grows at a larger exponential rate than $N_T(g^t)$.
\end{theorem}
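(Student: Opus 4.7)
The plan is to establish the chain $\htop(\psi^t) > \hmu(\psi^t) \ge \hmu(g^t) = \htop(g^t)$ one link at a time, and then deduce the periodic-orbit consequence from the characterization of topological entropy for Anosov flows.

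The equality $\hmu(g^t) = \htop(g^t)$ at the right end is the constant-curvature case of the rigidity statement recalled in \cref{SBSEntropy}: for a surface geodesic flow the two entropies coincide exactly when the curvature is constant, which is our standing assumption.

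For the middle inequality $\hmu(\psi^t) \ge \hmu(g^t)$ I would apply Pesin's entropy formula to both contact flows, each of which preserves its contact volume and has a one-dimensional strong-unstable direction, reducing the comparison to integrated positive Lyapunov exponents. Outside a neighborhood of the surgery annulus the manifolds $M$ and $M_S$ are canonically identified and the contact forms agree, so the two flows coincide on that common region with the common pointwise Lyapunov exponent $1$. Inside the surgery tube, the explicit form of the generator $X_h$ recalled in \cref{SOutlineSurgery} adds a Legendrian shear that, by a direct computation in the adapted frame supplied by the surgery construction, can only enhance the exponential expansion of unstable vectors, so the positive Lyapunov exponent of $X_h$ is at least $1$ pointwise in the tube as well. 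After normalizing the two contact volumes to probability measures one concludes $\hmu(\psi^t) \ge 1 = \hmu(g^t)$.

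The leftmost strict inequality $\htop(\psi^t) > \hmu(\psi^t)$ is already recorded as item \ref{itemBMMnotVolume} of \cref{THMMain}: for the surgered contact Anosov flow the measure of maximal entropy is singular with respect to the contact volume \cite{Foulon}, and the Variational Principle then forces $\htop > \hmu$ strictly. The final paragraph of the theorem is then immediate from the first bullet of \cref{SBSEntropy}, since $\htop$ of an Anosov flow is the exponential growth rate of periodic orbits: the ratio $N_T(\psi^t)/N_T(g^t)$ grows at least like $e^{(\htop(\psi^t)-\htop(g^t)-o(1))T}$ and in particular tends to infinity exponentially fast.

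The main obstacle will be the middle step: establishing the pointwise lower bound on the positive Lyapunov exponent of $X_h$ inside the surgery tube. Although the shear introduced by the surgery is explicit, one must carefully locate the new strong-unstable direction with respect to this shear to confirm that the expansion rate cannot drop below that of the unperturbed geodesic flow. Tracking the normalization of the contact volumes between the two distinct manifolds is a minor but real bookkeeping point that has to be handled to turn the pointwise comparison into the desired inequality between the two Liouville entropies.
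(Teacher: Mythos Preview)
Your overall architecture---establishing the chain one link at a time, invoking constant-curvature rigidity on the right and \cref{THMMain}\ref{itemBMMnotVolume} on the left---matches the paper exactly. The gap is in the middle step.

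You attempt to show $\hmu(\psi^t)\ge\hmu(g^t)$ by working directly with $X_h$ and arguing about ``pointwise Lyapunov exponents'' inside and outside the surgery tube. Two problems: first, the claim that the two flows coincide outside the tube is false, since $X_h=cR_{\alpha_A}$ with $c\neq1$ in general, so outside the tube $X_h=cX$, not $X$; second, Lyapunov exponents are asymptotic along-orbit quantities, not local ones, and the time-change factor $1/(1\pm\dd h(X_{\mathit{HT}}))$ built into $R_{\alpha_A}$ is a \emph{slow-down} near the annulus, so a direct expansion estimate for $X_h$ there is not obviously favorable.

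The paper resolves this by factoring through the intermediate Handel--Thurston field $X_{\mathit{HT}}$, which you do not mention. One first proves $\hmu(X_{\mathit{HT}})\ge\hmu(X)$ (\cref{PRPsameLyapunov}) by showing the time-$1$ map of $X_{\mathit{HT}}$ expands unstable vectors by at least $e$ in an adapted norm: away from the annulus this is the geodesic flow, and at each crossing the shear $DF$ only \emph{increases} the $E^+$-component (this is the geometric content of \cref{FIGShearGrowsUnstable}). Then Abramov's formula \eqref{eqAbramov}, with the constant $c$ chosen precisely so that the correction integral equals $1$, gives $\hmu(X_h)=\hmu(X_{\mathit{HT}})$. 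Your ``bookkeeping'' remark about volume normalization is in fact the substance of this second step, not a minor detail; without separating the shear from the time-change via $X_{\mathit{HT}}$ and Abramov, the argument does not close.
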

\begin{remark}\label{REMBishopHughesVinhageYang}
The strict inequality in \cref{THMLargerTopEnt} is obtained by contraposition of a rigidity result \cite{Foulon}, so we do not know by how much the topological entropy increases through our surgery. Recently, Bishop, Hughes, Vinhage and Yang suggested to provide effective lower bounds for this entropy-increase by using cutting sequences in the spirit of Series.
\end{remark}

\subsubsection{Growth in homology classes}\label{SGrowthHomologyClass}
In a self-contained digression, we can give rather more detailed information about orbit growth in homology classes.

\begin{theorem}\label{THMLargerSharpGrowth}
If\/ $\psi^t$ is a contact Anosov flow obtained from the geodesic flow $g^t$ of a compact oriented surface of constant negative curvature by the surgery in \cref{THMMain} (generated by the vector field in \eqref{eqDEFXh}), then
\[N_T^\zeta(\psi^t)\big/N_T^\eta(g^t)\xrightarrow[T\to\infty]{\text{\tiny{exponentially}}}\infty\]
for any homology classes $\zeta$ for $\psi^t$ and $\eta$ for $g^t$ (where $N_T^\zeta(\psi^t)$ and $N_T^\eta(g^t)$ count the number of periodic orbits orbit with period $\leq T$ in the \emph{homology} classes $\zeta$ and $\eta$).
\end{theorem}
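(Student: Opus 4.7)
The plan is to use the cohomological pressure formalism to reduce the statement to the strict inequality $\htop(\psi^t)>\htop(g^t)$ furnished by \cref{THMLargerTopEnt} (chaining $\htop(\psi^t)>\hmu(\psi^t)\ge\htop(g^t)$). Concretely, I would aim to show that for every fixed homology class $\zeta$ of $\psi^t$,
\[
\liminf_{T\to\infty}\frac{1}{T}\log N_T^\zeta(\psi^t)=\htop(\psi^t),
\]
while for every homology class $\eta$ of $g^t$,
\[
\limsup_{T\to\infty}\frac{1}{T}\log N_T^\eta(g^t)\le\htop(g^t),
\]
the latter being immediate from $N_T^\eta(g^t)\le N_T(g^t)$ and the standard prime-orbit upper bound. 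Subtracting then gives $\liminf_{T\to\infty}\frac{1}{T}\log\bigl(N_T^\zeta(\psi^t)/N_T^\eta(g^t)\bigr)\ge \htop(\psi^t)-\htop(g^t)>0$, which is exactly the claimed exponential blow-up.

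For the displayed liminf for $\psi^t$ I would invoke the prime orbit theorem in a fixed homology class for transitive Anosov flows with orientable splitting: classically Katsuda--Sunada and Phillips--Sarnak for geodesic flows of hyperbolic surfaces, extended to a wide class of Anosov flows by Pollicott, Sharp and Babillot--Ledrappier. The most natural framework is the cohomological pressure of \cref{SGrowthHomologyClass}, \[P_\psi([\omega])=\sup_\mu\Bigl(h_\mu(\psi)-\int\omega(X)\dd\mu\Bigr),\] with the variational identity $P_\psi(0)=\htop(\psi^t)$. For a \emph{fixed} class $\zeta$, the normalized asymptotic cycles $\zeta/T$ tend to $0$ as $T\to\infty$, and the Legendre dual of $P_\psi$ governing the exponential rate of $N_T^\zeta(\psi^t)$ is evaluated at the origin, yielding $\htop(\psi^t)$. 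The analogous input for $g^t$ supplies the matching equality $\frac{1}{T}\log N_T^\eta(g^t)\to\htop(g^t)$, and in fact the refined Sharp asymptotics $N_T^\eta(g^t)\sim Ce^{\htop(g^t)T}/T^{1+\gamma}$ can be used directly if one prefers an explicit control of the polynomial factors in the ratio.

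The hardest step is verifying the hypotheses of this prime-orbit theorem for $\psi^t$. Topological mixing is immediate, since contact Anosov flows are Bernoulli (\cref{REMBernoulli}). The real difficulty is \emph{homological fullness}, namely that the zero cycle lies in the interior of the set of asymptotic cycles of $\psi^t$-invariant probability measures. For $g^t$ this is automatic, because the flip involution $v\mapsto -v$ on $S\Sigma$ reverses orbits and negates asymptotic cycles, so $0$ is an interior point by symmetrization. After surgery this involution is destroyed, and fullness must be reconstructed intrinsically. The plan here is to combine the Liouville measure of $\psi^t$, which has a well-defined asymptotic cycle, with periodic orbits concentrated near the surgery annulus whose homology classes are computable from the surgery data of \cref{SOutlineSurgery}; perturbing the relative frequencies with which orbits traverse the surgery locus should produce enough invariant measures whose asymptotic cycles fill a full-dimensional convex neighborhood of the Liouville asymptotic cycle in $H_1(M_S,\R)$, which suffices.
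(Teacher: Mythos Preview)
Your plan has a genuine gap at the core identification of the exponential rate. Sharp's counting theorem \eqref{eqSharpCounting} gives
\[
N_T^\zeta(\psi^t)\sim C(\zeta)\,\frac{e^{T\,P(\psi^t)}}{T^{1+b_1/2}},
\]
so the exponential rate in a fixed homology class is the \emph{cohomological pressure} $P(\psi^t)=\inf_{[b]}\sup_\mu\bigl(h_\mu+\int b(X)\,\dd\mu\bigr)$, not $\htop(\psi^t)$. Your Legendre-dual argument in fact confirms this: for a fixed integer class $\zeta$ one has $\zeta/T\to0$, and the Legendre transform of $[\omega]\mapsto P_\psi([\omega])$ evaluated at the origin is $\inf_{[\omega]}P_\psi([\omega])=P(\psi^t)$, \emph{not} $P_\psi(0)=\htop(\psi^t)$. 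You have conflated ``evaluate $P_\psi$ at $0$'' with ``evaluate the dual at $0$.'' Since \eqref{eqFanginequality} only gives $P(\psi^t)\le\htop(\psi^t)$, and nothing in the surgery construction forces the Bowen--Margulis measure of $\psi^t$ to be homologically trivial, you have no reason to expect equality. Consequently the chain $\htop(\psi^t)>\htop(g^t)$ from \cref{THMLargerTopEnt} does not by itself yield $P(\psi^t)>P(g^t)$.

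The paper closes this gap differently: it invokes Fang's rigidity result (\cref{THMFangCohomPressure}), which gives the strict inequality $P(\psi^t)>\hmu(\psi^t)$ directly, and then chains
\[
P(g^t)\le\htop(g^t)=\hmu(g^t)\le\hmu(\psi^t)<P(\psi^t),
\]
using constant curvature for the equality and \eqref{eqLiouvilleEntDefect} for the middle comparison. Two further remarks: homological fullness of $\psi^t$ is not a difficulty you need to reconstruct by hand---contact Anosov flows are homologically full by \cite[Proposition~1]{FangTherm}, as the paper notes just before \eqref{eqSharpCounting}; and your proposed ad hoc argument for it (perturbing orbit frequencies near the surgery annulus) is both unnecessary and, as sketched, not clearly sufficient to reach the interior of the asymptotic-cycle simplex.
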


The proof derives from the notion of \emph{cohomological pressure}.

\begin{definition}[{\cite[Theorem 1(iii), p.\ 398]{Sharp}}]
The \emph{cohomological pressure} of\/ $\varphi^t$ is \looseness-1
\[
P(\varphi^t)\dfn\inf_{[b]\in H^1(M,\R)}\Big\{\sup_{\mu\in\Mc(\varphi^t)}\big\{h_\mu(\varphi^t)+\int b(X)\dd\mu\big\}\Big\},
\]
where
$\Mc(\varphi^t)$ is the set of\/ $\varphi^t$-invariant Borel probability measures.
\end{definition}

\begin{remark}
The cohomological pressure is the usual pressure of the function \(b(X)\) and the abose formula is well-defined for a cohomology class $[b]$. Indeed, here, \(H^1(M,\R)\) is the first de Rham cohomology group, and the integral is the Schwartzman winding cycle, which is well-defined for a closed 1-form when $\mu$ is $\varphi^t$-invariant; the supremum is unaffected by addition of an exact form to~\(b\).
\end{remark}

Contact Anosov flows satisfy
\begin{equation}
\htop(\varphi^t)\ge P(\varphi^t)\ge\hmu(\varphi^t)\quad\text{\cite[Corollary 1]{FangTherm}}.\label{eqFanginequality}
\end{equation}
\begin{theorem}[{\cite[Theorem 5.3]{FangTherm}}]\label{THMFangCohomPressure}
If\/ $\varphi^t$ is a flow such as the ones obtained in \cref{THMMain}, then
$P(\varphi^t)>\hmu(\varphi^t)$.
\end{theorem}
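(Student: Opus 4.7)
The plan is to invoke the cited rigidity result of Fang in the same contrapositive spirit as the proof of the strict inequality in \cref{THMLargerTopEnt}. Recall from \eqref{eqFanginequality} that every contact Anosov flow already satisfies the soft inequality $P(\varphi^t)\ge\hmu(\varphi^t)$, so what is at stake is only to rule out the equality case. Fang's \cite[Theorem 5.3]{FangTherm} achieves precisely this by characterizing the equality case: for a contact Anosov flow on a closed three-manifold, $P(\varphi^t)=\hmu(\varphi^t)$ forces $\varphi^t$ to be (smoothly) orbit-equivalent to an algebraic Anosov flow, i.e.\ up to finite covers either the geodesic flow of a surface of constant negative curvature or the suspension of a hyperbolic automorphism of the 2-torus.

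Given that characterization, the proof reduces to a single verification: our surgered flow is not on that rigid list. But this is exactly the content of \cref{THMMain}\eqref{itemNotUTB}, which asserts that the contact Anosov flow $\psi^t$ produced by the surgery of \cref{THMMain} is not topologically orbit-equivalent to any algebraic Anosov flow. A fortiori, it is not \emph{smoothly} orbit-equivalent to one. Hence the equality hypothesis of Fang's theorem fails for $\psi^t$, and the contrapositive gives the strict inequality $P(\psi^t)>\hmu(\psi^t)$ claimed in the statement.

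The only point requiring any care is checking that Fang's hypotheses apply in our context, and they do so directly: $\psi^t$ is by construction the Reeb flow of the contact form $\alpha_A$ on the surgered closed oriented 3-manifold $M_S$, and \cref{THMMain} guarantees that it is Anosov (for the relevant surgery parameters). So the framework of \cite[Theorem 5.3]{FangTherm} is exactly met, and no approximation or perturbation argument is needed. The main conceptual obstacle—proving the rigidity underlying the equality case—has been outsourced entirely to \cite{FangTherm}; all that remains on our side is the non-algebraicity already established in \cref{THMMain}.
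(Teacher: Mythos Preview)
Your proposal is correct and matches the paper's approach: the paper does not give its own proof of this statement but simply attributes it to \cite[Theorem 5.3]{FangTherm}, and your write-up spells out exactly the contrapositive reasoning (Fang's rigidity characterization of the equality case combined with \cref{THMMain}\eqref{itemNotUTB}) that the bare citation leaves implicit. There is nothing to add or correct.
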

Thus, the conclusion of \cref{THMLargerTopEnt} is strengthened to
\[\htop(\varphi^t)\ge P(\varphi^t)>\hmu(\varphi^t)\ge\hmu(g^t)=\htop(g^t).\]
This makes it possible to amplify the observation about increased orbit growth and prove \cref{THMLargerSharpGrowth}.
Indeed, contact Anosov flows are homologically full\footnote{I.e., every homology class contains a closed orbit} \cite[Proposition
1]{FangTherm}, and, for homologically full flows, cohomological pressure drives orbit growth in a given homology class $\zeta$ \cite[Theorem 1]{Sharp}:
\begin{equation}\label{eqSharpCounting}
N_T^\zeta(\phi_t)\sim C(\zeta)\frac{e^{TP(\varphi_t)}}{T^{1+\frac{b_1}{2}}}\text{ as }T\to\infty,
\end{equation}
where $b_1$ is the first Betti number of the underlying manifold.

\subsection{Production of closed orbits for any Reeb flow}\label{SBSClosedOrbitsReeb}

We now broaden the scope far beyond hyperbolic dynamics by beginning to involve contact geometry in a serious fashion. Specifically, the existence of well-understood Reeb flows, such as those in \cref{THMMain}, allows us to control all the other Reeb flows associated to the same contact structure in terms of entropy or orbit growth. We transcend hyperbolicity because we describe here our results concerning dynamical properties of Reeb flows associated to all (or a subclass of) contact forms after a contact surgery. These flows need not be hyperbolic even if the contact structure arises from an Anosov flow.

\subsubsection{Orbit growth from Anosov Reeb flows}

This section presents an archetype of theorem deriving properties for all Reeb flows from stronger properties for one Reeb flow. Our results described in \cref{SOrbitGrowthReebFlows} can be seen as extension of this theorem. It can be applied to some of the contact flows described in \cref{THMMain}.

The existence of Anosov Reeb flows is a source of exponential orbit growth for all Reeb flows as proved by Alves or Macarini and Paternain~\cite[Theorem 2.12.]{MacariniPaternain}.

\begin{theorem}[{Alves \cite[Corollary~1]{Alves3}}]\label{THMAlvesAnosov}
If one Reeb flow for a compact contact 3-manifold \((M,\xi)\) is Anosov, then every Reeb flow on \((M,\xi)\) has positive topological entropy.
\end{theorem}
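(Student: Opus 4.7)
The plan is to route through cylindrical contact homology, which depends only on the contact structure $(M,\xi)$ and hence yields a lower bound on the orbit growth of \emph{every} compatible Reeb flow. I would first compute (or at least bound) this homology using the given Anosov Reeb flow, then show that it has exponential growth in the action filtration, and finally convert this exponential homological growth into positive topological entropy for an arbitrary Reeb flow on $(M,\xi)$.

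First, a contact Anosov flow is hypertight by Plante--Thurston, so the Anosov Reeb field $R_\alpha$ has no contractible closed orbit and cylindrical contact homology $HC_*(M,\xi)$ is well-defined. Because every closed orbit of an Anosov flow is hyperbolic, $R_\alpha$ is automatically nondegenerate and can be used directly as a defining datum, its closed orbits furnishing the generators filtered by action. Exponential orbit growth of the Anosov flow yields that the number of such generators of action at most $T$ grows like $e^{\htop(R_\alpha)T}$.

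Next I would show that $HC_*(M,\xi)$ itself has exponential action-growth. The cylindrical differential respects free homotopy classes, so the chain complex decomposes as a direct sum indexed by these classes and cancellations can happen only \emph{within} a single class. On a closed 3-manifold supporting an Anosov flow, the number of distinct free homotopy classes represented by closed orbits of period at most $T$ grows exponentially in $T$ (orbits may cluster within a class, but still exponentially many classes are populated), and in each such nonzero class at least one generator must survive to homology. This gives $\dim HC_*^{\le T}(M,\xi)\ge Ce^{h'T}$ for some $h'>0$. Invariance of $HC_*$ under the choice of contact form then implies that for any other Reeb flow $R_{\alpha'}$ on $(M,\xi)$, after a small nondegenerate perturbation of $\alpha'$, the orbit count $N_T(\alpha')$ is bounded below by the rank of $HC_*^{\le T}$ and hence grows exponentially in $T$.

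The main obstacle is the last implication: exponential orbit growth does not \emph{a priori} force positive topological entropy for a general smooth flow. This is precisely where the contact-geometric machinery of \cite{Alves3} enters: Alves shows, through a careful analysis of moduli spaces of finite-energy pseudoholomorphic cylinders in the symplectization $\R\times M$, that exponential action-growth of cylindrical contact homology produces a genuine topological-entropy lower bound for every Reeb flow on $(M,\xi)$. Combining this implication with the exponential growth of $HC_*(M,\xi)$ derived from the Anosov flow finishes the argument.
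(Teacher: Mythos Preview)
The paper does not itself prove this theorem; it is quoted from Alves~\cite{Alves3}. What the paper provides in \cref{SECContHom} is a related argument, for the specific contact structures produced by its surgery, that yields exponential orbit growth (rather than entropy) for any nondegenerate contact form. Comparing that argument to your sketch pinpoints a genuine gap.

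Your outline is in the right direction, but the step ``in each such nonzero class at least one generator must survive to homology'' is unjustified: nothing in general prevents the cylindrical differential from killing every generator in a populated free homotopy class. The paper closes this gap (under an extra hypothesis) by a parity argument: when the stable and unstable foliations of the Anosov flow are orientable, every closed orbit is \emph{even} (\cref{PROPEvenOrbits}), and since the differential changes parity (\cref{THMFPCH}.\ref{itemContHomDiffOddEven}) it vanishes identically, so cylindrical contact homology is the free $\Q$-vector space on all closed orbits and \cref{CORGR} transfers the exponential orbit count of the Anosov form to any other nondegenerate form. Without orientability you would need to pass to a double cover or supply some other mechanism; your sketch does neither. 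Your auxiliary claim that exponentially many free homotopy classes are populated is likewise asserted rather than argued, and becomes unnecessary once one knows the differential vanishes.
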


\begin{remark}
Alves also obtains lower bounds for the entropy: for $\alpha=f\alpha_0$ with $f>0$, we get $h(R_\alpha)\geq a/\max(f)$ where $a$ is some growth rate associated to $R_{\alpha_0}$. Note also that these estimates can not be obtained by the Abramov formula, which determines the measure-theoretic entropy of a time-change because different Reeb fields for a contact structure need not be collinear.
\end{remark}

The standard contact structure on the unit tangent bundle of a hyperbolic surface has an Anosov Reeb flow and therefore, by \cref{THMAlvesAnosov}, all its other Reeb flows have positive entropy and their orbit growth is at least exponential. In particular, \cref{THMAlvesAnosov} applies to the contact structures obtained in \cref{THMMain} on hyperbolic manifolds: these are examples satisfying the Colin--Honda conjecture, and on non\-hyperbolic manifolds, for instance, when the surgery is associated to a simple geodesic. We give a slightly different proof of this result in \cref{SECContHom}.

\subsubsection{Orbit growth from contact homology}\label{SOrbitGrowthReebFlows}

We now present our results and extend \cref{THMAlvesAnosov} in two different settings
\begin{enumerate}
\item when the Reeb flow after surgery is Anosov, we study orbit growth in free homotopy classes;
\item when the geodesic associated to the surgery is a simple curve, we prove positivity of entropy for any contact form (and any surgery).
\end{enumerate}

Let us describe our results in the first setting. The following result can be seen as a corollary
of the invariance of contact homology and the Barthelm\'e--Fenley estimates from \cite[Theorem~F]{BarthelmeFenley2} in the non\-degenerate case, and of Alves' proof of Theorem~1 in \cite{Alves3} and the Barthelm\'e--Fenley estimates from \cite[Theorem~F]{BarthelmeFenley2} in the degenerate case.

\begin{theorem}\label{THMFHC}
Let $(M_S,\xi_S=\ker(\alpha_A))$ be a contact manifold obtained after a non-trivial contact surgery such that $\alpha_A$ is Anosov. Let $\rho$ be a primitive free homotopy class containing at least one $R_{\alpha_A}$-periodic orbit. Then for all contact forms $\alpha$ on $(M_S,\xi_S)$, $\rho$ contains infinitely many $R_\alpha$-periodic orbits.
Additionally,
\begin{enumerate}
\item if\/ $\alpha$ is non\-degenerate, there exist $a>0$ and $b\in\mathbb R$ such that
$N_T^\rho(\lambda)\geq a\ln(T)+b$ for all\/ $T>0$,
\item if\/ $\alpha$ is degenerate and $M_S$ is hyperbolic, there exist $a>0$ and $b\in\mathbb R$ such that
$N_T^\rho(\lambda)\geq a\ln(\ln(T))+b$
for all\/ $T>0$.
\end{enumerate}
\end{theorem}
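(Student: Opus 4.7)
The plan is to use cylindrical contact homology per primitive free homotopy class as an invariant of \(\xi_S\), equipped with an action filtration, so that orbit-growth estimates for the distinguished Anosov form \(\alpha_A\) transfer to every Reeb flow compatible with \(\xi_S\). The starting observation is that, because \(\alpha_A\) is Anosov, it has no contractible closed orbit (Plante--Thurston, recalled in \cref{SDefinitionsNotations}), hence it is hypertight, and all its closed orbits are hyperbolic and non-degenerate. Consequently the cylindrical contact homology \(HC^\rho_*(\xi_S)\) in each primitive class \(\rho\), together with its action filtration \(HC^{\rho,\leq T}_*(\xi_S)\), is well-defined (cf.\ \cref{SECContHom}) and is an invariant of \(\xi_S\). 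For every non-degenerate contact form \(\alpha\) with \(\ker\alpha=\xi_S\), the chain group in filtration \(\leq T\) has exactly \(N_T^\rho(\alpha)\) generators, so
\[N_T^\rho(\alpha)\;\geq\;\dim HC^{\rho,\leq T}_*(\alpha)\;=\;\dim HC^{\rho,\leq T}_*(\xi_S),\]
and the bulk of the work will be to bound this last quantity from below.

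The core step is to establish such a lower bound using \(\alpha_A\). \cref{THMMain}(\ref{itemBMMnotVolume}) through (4) guarantees at least \(a_2^{-1}\ln(T)-c_2\) orbits of \(\alpha_A\) of period \(\leq T\) in \(\rho\), each contributing a generator to \(CC^{\rho,\leq T}_*(\alpha_A)\). The differential counts \(J\)-holomorphic cylinders between orbits in the same free homotopy class and strictly decreases action. I would argue that it cannot erase the logarithmic growth: the Barthelm\'e--Fenley description of \(\rho\) as a single isotopy class with sparse action spectrum, combined with Conley--Zehnder parity considerations for hyperbolic orbits of an Anosov flow, forces
\[\dim HC^{\rho,\leq T}_*(\xi_S)\;\geq\;a\ln(T)+b\]
for some \(a>0\), \(b\in\R\). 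Combining this with the previous display settles part~(1) in the non-degenerate case.

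For part~(2), when \(\alpha\) is degenerate and \(M_S\) is hyperbolic, I would follow Alves~\cite{Alves3}: approximate \(\alpha\) by non-degenerate contact forms \(\alpha_n\to\alpha\) with \(\ker\alpha_n=\xi_S\); each \(\alpha_n\) satisfies the logarithmic bound from part~(1). As \(n\to\infty\), several orbits of \(\alpha_n\) may coalesce onto a single degenerate orbit of \(\alpha\), but SFT-compactness together with the hyperbolicity of \(M_S\) controls the coalescence rate as at most exponential in the period. Consequently the orbit count of \(\alpha_n\) at filtration \(\leq T\) captures only orbits of \(\alpha\) at filtration roughly \(\leq\ln(T)\), which yields the claimed \(\ln(\ln(T))\) lower bound.

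The main obstacle will be the middle paragraph: turning the generator count for \(\alpha_A\) into a rank lower bound for the filtered cylindrical homology. This is where one must invoke the structural information about Anosov free homotopy classes (each being a single isotopy class with well-separated action spectrum, per \cite{BarthelmeFenley2}) and carefully handle gradings and cancellations in the complex. The degenerate case additionally requires Alves' SFT-compactness machinery on hyperbolic \(3\)-manifolds, which is where the hyperbolicity hypothesis on \(M_S\) enters essentially.
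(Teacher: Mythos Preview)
Your proposal has the right ingredients but two genuine gaps.

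In part~(1), the assertion \(\dim HC^{\rho,\leq T}_*(\alpha)=\dim HC^{\rho,\leq T}_*(\xi_S)\) is false: the action filtration depends on the contact form, and filtered contact homology is \emph{not} an invariant of \(\xi_S\). What \emph{is} available (\cref{THMFPCH}(6) and \cref{PROPGR}) is that for \(\alpha=f\alpha_A\) with \(1/B\le f\le B\) there is a constant \(C=C(B)\) and a morphism \(C\mathbb H^\rho_{\le T}(\alpha_A)\to C\mathbb H^\rho_{\le CT}(\alpha)\) compatible with the direct systems, yielding only \(N_T^\rho(\alpha)\ge\mathrm{rank}\,\phi_{T/C}(\alpha_A)\). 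Second, your ``Conley--Zehnder parity considerations'' are too vague and miss the essential point. The paper's argument is that the stable and unstable foliations of the surgered flow are \emph{orientable} (\cref{PROPFoliationsOrientability}), whence every \(R_{\alpha_A}\)-orbit is \emph{even} hyperbolic (\cref{PROPEvenOrbits}); by \cref{THMFPCH}(\ref{itemContHomDiffOddEven}) the differential of the contact complex for \(\alpha_A\) is then identically zero. This gives \(C\mathbb H^\rho_{\mathrm{cyl}}(\alpha_A)=\bigoplus_\gamma\mathbb Q\gamma\) outright, after which \cref{CORGR} yields \(N_T^\rho(\alpha)\ge N_{T/C}^\rho(\alpha_A)\), and Barthelm\'e--Fenley finishes. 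Orientability of the foliations is a specific feature of this surgery, not a general property of contact Anosov flows, so it cannot be skipped.

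For part~(2), your ``exponential coalescence rate'' picture does not match the actual mechanism. The paper (following Alves) shows via SFT compactness that for \emph{each} \(R_{\alpha_A}\)-orbit in \(\rho\) of period \(T\) there exists an \(R_\alpha\)-orbit in \(\rho\) of period in \([eT,ET]\), where \(e=\min|f_\alpha|\), \(E=\max|f_\alpha|\) (\cref{CORExistencePO}). To guarantee that these \(R_\alpha\)-orbits are distinct, one selects a subsequence \((\gamma_n)\) of \(\alpha_A\)-orbits whose periods satisfy \(T_{n+1}>(E/e)T_n\). The doubly-logarithmic bound then comes from a purely combinatorial estimate that uses \emph{both} the upper and the lower Barthelm\'e--Fenley bounds on \(N_T^\rho(\alpha_A)\) to control how fast such a sparse sequence must grow: one gets \(T_{n+1}\le(c_3T_n)^{a_3}\), hence \(\ln\ln T_n\lesssim n\). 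Hyperbolicity of \(M_S\) enters only through the logarithmic \emph{upper} bound on \(N_T^\rho(\alpha_A)\), not through any SFT control of coalescence.
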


\begin{remark}
In fact, for $\alpha$ non\-degenerate, we will prove $N_T^\rho(\alpha)\geq N_{CT}^\rho(\alpha_A)$ for some $C>0$ and for all\/ $T>0$ and use the Barthelm\'e--Fenley result. Therefore better control of\/ $N_{T}^\rho(\alpha_A)$ in some free homotopy classes will lead to better estimates.
\end{remark}

\begin{remark}
There is no hope to obtain a upper bound on $N_{T}^\rho(\alpha_A)$ for all contact forms as the number of Reeb periodic orbits can always be increased by creating many periodic orbits in a neighborhood of a preexisting periodic orbit.
\end{remark}

\begin{remark}\label{RMKGR}
If the manifold is not hyperbolic, the Barthelm\'e and Fenley estimates are weaker as the upper bound is linear. The proof of \cref{THMFHC} can be adapted to this situation but leads to weak control of the growth of periodic orbits in a given homotopy class for degenerate contact forms.
\end{remark}

We now turn to our second setting and assume the geodesic associated to the surgery is a simple curve. Note that we do not assume that the Reeb flow is Anosov and therefore consider any $(1,q)$-Dehn surgery. Additionally, note that $M_S$ is never a hyperbolic manifold in this setting. Our main theorem is the following.

\begin{theorem}\label{th_courbe_simple_entropie}
If\/ $(M_S,\alpha_A)$ is a contact manifold obtained from contact surgery along a simple geodesic, then any Reeb flow of\/ \((M_S,\ker(\alpha_A))\) has positive topological entropy.
\end{theorem}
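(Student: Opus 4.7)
The plan is to use the invariance of cylindrical contact homology $CH_*(\xi_S)$ under the choice of defining contact form, together with Alves' criterion from~\cite{Alves3} that exponential growth of the action-filtered cylindrical contact homology of a hypertight contact 3-manifold implies positive topological entropy for \emph{every} Reeb flow representing the contact structure. Thus the theorem will follow once we establish
\[
\liminf_{L\to\infty}\frac{\log\dim CH_*^L(\xi_S)}{L}>0,
\]
which we verify by computing $CH$ with the form $\alpha_A$ (after a small perturbation in the surgery region, if $q<0$, to ensure nondegeneracy where needed).

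Since the simple geodesic $\geodesic$ is embedded, $\Sigma\setminus\geodesic$ splits into one or two hyperbolic subsurfaces with geodesic boundary; pick one such component $\Sigma_0$. The contact surgery recalled in~\cref{SOutlineSurgery} modifies the contact form only in a tubular neighborhood $N(\gamma)$ of the Legendrian lift $\gamma\subset S\Sigma$, so every closed geodesic $c$ in the interior of $\Sigma_0$ lifts to a nondegenerate hyperbolic closed orbit $\widehat c\subset S\Sigma_0\setminus N(\gamma)$ of the geodesic flow, which persists as a periodic orbit of $R_{\alpha_A}$ with unchanged action. Because $\pi_1(\Sigma_0)$ is nonabelian (free of rank at least $2$ in most components, a surface group with boundary otherwise), the hyperbolic length cocycle produces exponentially many primitive such $c$ of length at most $L$.

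The critical geometric step is to verify that these orbits realize exponentially many distinct primitive free homotopy classes in $M_S$. Incompressibility of $\geodesic$ makes $\pi_1(\Sigma_0)\hookrightarrow\pi_1(\Sigma)$ $\pi_1$-injective, and composing with the canonical section gives injectivity of the induced map on conjugacy classes into $\pi_1(S\Sigma\setminus N(\gamma))$. A Seifert--van Kampen analysis of the $(1,q)$-Dehn filling shows that the new relation appearing in $\pi_1(M_S)$ involves the meridian of $\partial N(\gamma)$, and therefore does not collapse conjugacy classes represented by loops lying entirely in $S\Sigma_0\setminus N(\gamma)$ (beyond at worst finite multiplicity). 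Each such class contributes at least one good generator to cylindrical contact homology; since the cylindrical differential preserves free homotopy classes, these generators cannot cancel against one another. This yields the sought exponential lower bound on $\dim CH_*^L(\xi_S)$, and Alves' criterion delivers positive topological entropy for every Reeb flow of $\xi_S$.

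The main obstacle will be the topological persistence step: carefully controlling how the relation introduced by the $(1,q)$-Dehn surgery acts on the conjugacy classes inherited from $\pi_1(\Sigma_0)$ through $S\Sigma_0\hookrightarrow M_S$, so that exponentially many distinct classes survive into $\pi_1(M_S)$. A secondary technical point, relevant when $q<0$ and $R_{\alpha_A}$ may carry degenerate orbits away from $S\Sigma_0$, is to perturb $\alpha_A$ inside $N(\gamma)$ into a nondegenerate contact form while preserving the orbits $\widehat c$ and their free homotopy classes, so that the standard definition of cylindrical contact homology applies without altering the above count.
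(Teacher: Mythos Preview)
Your overall strategy---produce exponentially many closed geodesics in a component $\Sigma_0$ of $\Sigma\setminus\geodesic$, lift them to Reeb orbits unaffected by the surgery, and invoke Alves---matches the paper's approach. But there is a genuine gap at the step where you claim ``each such class contributes at least one good generator to cylindrical contact homology.'' You argue only that the orbits $\widehat c$ lie in \emph{distinct} free homotopy classes of $M_S$; you do not verify that each such class contains \emph{no other} $R_{\alpha_A}$-periodic orbit. If some other orbit (for instance one that crosses the surgery region) were freely homotopic to $\widehat c$ in $M_S$, the two generators could cancel in the cylindrical differential, and your lower bound on $\dim CH_*^L$ would evaporate. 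For $q>0$ one could rescue this via the parity argument (the flow is Anosov with orientable strong foliations, so every orbit is even and the differential vanishes), but for $q<0$ the flow is not Anosov and no parity argument is available; your proposed perturbation inside $N(\gamma)$ does nothing to exclude additional orbits in the same free homotopy class.

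The paper closes exactly this gap, and in a way quite different from your Seifert--van~Kampen sketch. It uses the incompressible torus $\mathbb T_S=\pi^{-1}(\geodesic)$ (the full fibered torus over $\geodesic$, not $\partial N(\gamma)$) as a separating surface and proves directly, via a transversality and innermost-arc argument on homotopies $H\colon[0,1]\times S^1\to M_S$, that any $R_{\alpha_A}$-periodic orbit disjoint from $\mathbb T_S$ is \emph{alone} in its free homotopy class. The same technique also yields hypertightness of $\alpha_A$, which you need for Alves' criterion but do not address. With uniqueness in hand, $C\mathbb H^{\rho}_{\mathrm{cyl}}(\alpha_A)=\mathbb Q$ for each such $\rho$, and the exponential count of geodesics in $\Sigma_0$ gives exponential \emph{homotopical} growth in the sense of \cite{Alves1} (this is the relevant reference here, not \cite{Alves3}), from which positive entropy for every Reeb flow follows.
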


In particular, the number of periodic orbits grows at least exponentially with respect to the period. The proof of this theorem is based on Alves' work~\cite{Alves1}. In the same paper, Alves obtains the same result when the associated geodesic is separating~\cite[Section~4 and Theorem~2]{Alves1}. Our strategy of proof is similar to that of Alves.

Floer type homology and especially contact homology are the main tools to control Reeb periodic orbits of all contact forms associated to a contact structure. The contact homology of a ``nice'' contact form $\alpha_0$ is the homology of a complex generated by $R_{\alpha_0}$-periodic orbits and therefore encode dynamical properties of the Reeb vector field (contact homology is described in~\cref{SECContHom}).

The growth rate of contact homology makes it possible define the polynomial behavior of a contact structure. We now focus on examples obtained by surgery exhibiting polynomial growth.

\subsubsection{Coexistence of diverse contact flows}\label{SIMoreContactFlows}

We first introduce the three Reeb flows that naturally appear on the unit tangent bundle of a constantly curved surface of higher genus. This is elementary but not commonly presented.
On the unit tangent bundle of a hyperbolic surface, there is a canonical framing consisting of \(X\), the vector field on \(S\Sigma\) that generates the geodesic flow, of \(V\), the vertical vector field (pointing in the fiber direction), and of \(H\dfn[V,X]\). It satisfies the classical \emph{structure equations}
\begin{equation}\label{eqStructureEquations}
[V,X]=H,\quad[H,X]=V,\quad[H,V]=X.
\end{equation}
One can check these by using that in the \(\operatorname{PSL}(2,\R)\)-representation of \(S\tilde\Sigma\), these vector fields are given by
\[
X\sim\begin{pmatrix}\nicefrac12&0\\0&-\nicefrac12\end{pmatrix},\quad H\sim\begin{pmatrix}0&\nicefrac12\\\nicefrac12&0\end{pmatrix},\quad V\sim\begin{pmatrix}0&-\nicefrac12\\\nicefrac12&0\end{pmatrix}.
\]
The structure equations imply that $e^\pm\dfn V\pm
H$ satisfies $[X,V\pm H]=\mp e^\pm$, so if a vector field $f\cdot e^\pm$ along an orbit of \(X\) is
invariant under the geodesic flow, then $0=[X,fe^\pm]=(\dot f\mp
f)e^\pm$, where \(\dot f\) is the derivative along the orbit. This means that $ \dot f=\pm f$, so $f(t)=\operatorname{const}e^{\pm t}$. Thus, the differential of
the geodesic flow expands and contracts,
respectively, the directions $e^\pm$; this is the Anosov property and $E^\pm$ is spanned by the vector $e^\pm=V\pm H$.

Of course, in the \(\operatorname{PSL}(2,\R)\)-representation of \(S\tilde\Sigma\), these 3 flows are given by
\begin{align*}
X\leadsto\exp\Big(\begin{pmatrix}\nicefrac12&0\\0&-\nicefrac12\end{pmatrix}t\Big)&=\begin{pmatrix}e^{\nicefrac t2}&0\\0&e^{-\nicefrac t2}\end{pmatrix},\\
H\leadsto\exp\Big(\begin{pmatrix}0&\nicefrac12\\\nicefrac12&0\end{pmatrix}t\Big)&=\begin{pmatrix}\cosh\nicefrac t2&\sinh\nicefrac t2\\\sinh\nicefrac t2&\cosh\nicefrac t2\end{pmatrix},\\
V\leadsto\exp\Big(\begin{pmatrix}0&-\nicefrac12\\\nicefrac12&0\end{pmatrix}t\Big)&=\begin{pmatrix}\cos \nicefrac t2&-\sin \nicefrac t2\\\sin \nicefrac t2&\cos \nicefrac t2\end{pmatrix}
\end{align*}
To see in these terms that $X$ generates a contact flow, define a 1-form $\alpha_0$ by \(\alpha_0(X)=1\) and \(\alpha_0(V)=0=\alpha_0(H)\). For \(Z\in\{V,H\}\) we have
\[
\dist \alpha_0(X,Z)=\underbracket{\Lie_X\overbracket{\alpha_0(Z)}^{\equiv0}}_{=0}-\underbracket{\Lie_Z\overbracket{\alpha_0(X)}^{\equiv1}}_{=0}-\underbracket{\alpha_0(\overbracket{[X,Z]}^{\mathclap{\in-\{V,H\}}})}_{=0}=0,
\]
so $\iota_X\dist \alpha_0\equiv0$.
Additionally \(\alpha_0\wedge\dist \alpha_0(X,V,H)=\alpha_0(X)\dist \alpha_0(V,H)=1\) because
\[
\dist \alpha_0(V,H)=\underbracket{\Lie_V\overbracket{\alpha_0(H)}^{\equiv0}}_{=0}-\underbracket{\Lie_H\overbracket{\alpha_0(V)}^{\equiv0}}_{=0}-\underbracket{\alpha_0(\overbracket{[V,H]}^{=-X})}_{=-1}=1.
\]
Thus, $\alpha_0\wedge\dist \alpha_0$ is a volume form; in fact a volume particularly well adapted to this canonical framing, and $\alpha_0$ is a contact form. Additionally, \(X=R_{\alpha_0}\).

Likewise, one can check that the $1$-forms $\beta$ and $\gamma$ defined by \(\beta(V)=1\) and \(\beta(X)=0=\beta(H)\), and \(\gamma(H)=1\) and \(\gamma(X)=0=\gamma(V)\) are also contact forms. Their Reeb vector fields are $R_\beta=V$ and $R_\gamma=H$. Note that $\gamma = \dd \alpha_0(V,\cdot)$ and $\beta = -\dd \alpha_0(H,\cdot)$. Additionally, the orientation given by $\beta\wedge\dd\beta$ is the opposite of the orientation given by $\alpha_0\wedge\dist \alpha_0$; therefore $\alpha_0$ and $\beta$ define different contact structures. By contrast, $\alpha_0$ and $\gamma$ define isotopic contact structures. Indeed, let $\psi^t$ be the flow of\/ $V$. Then,
\begin{align*}
(\psi_t)_*X&=\cos\nicefrac t2X+\sin\nicefrac t2H
\quad\text{and}\\
(\psi_t)_*H&=\cos\nicefrac t2H-\sin\nicefrac t2X,
\end{align*}
thus
\[(\psi_t)_*\alpha_0=\cos\nicefrac t2\alpha_0+\sin\nicefrac t2\gamma \]
as the two contact forms coincide on $(\psi_t)_*X$, $(\psi_t)_*H$ and $(\psi_t)_*V=V$.
So it suffices to study the geodesic flow as the leading representative of this \(S^1\)-family of contact Anosov flows. Geometrically, this family of flows can be described as: rotate a vector by an angle, carry it along the geodesic it now defines, and rotate back by the same angle. In other words, it is parallel transport for a fixed angle.

Dynamically $R_{\alpha_0}$ and $R_\beta$ are polar opposites: the geodesic flow is hyperbolic and the fiber flow is periodic. The surgery increases the complexity of both, whether or not the twist goes in the correct direction to produce hyperbolicity from the geodesic flow. For the geodesic flow this is \cref{th_courbe_simple_entropie}, and for the fiber flow it is:

\begin{theorem}\label{th_courbe_simple_polynom}
Let \((M_S,\ker(\beta_S))\) be a contact manifold obtained from the contact form for the fiber flow after a non-trivial contact surgery along a simple geodesic. Then the growth rate of contact homology for \((M_S,\ker(\beta_S))\) is quadratic. In particular, any nondegenerate Reeb flow of \((M_S,\ker(\beta_S))\) has at least quadratic orbit growth\footnote{This means that for any nondegenerate contact form $\beta$ such that $\ker(\beta)=\ker(\beta_S)$ the number $N_T(\beta)$ of $R_\beta$-periodic orbits with period smaller than $T$ satisfies $N_T(\beta)\geq a T^2$ for some positive real number $a$.}.
\end{theorem}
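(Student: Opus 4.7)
The plan is to compute the cylindrical contact homology of $(M_S,\ker(\beta_S))$ directly, exhibit quadratic growth of its action-filtered version, and then invoke invariance to transfer the bound to every nondegenerate Reeb form on the same contact structure. First, I would analyze the Reeb dynamics of $\beta_S$. Before surgery $R_\beta=V$ is the periodic fiber flow, so $\beta$ is totally Morse--Bott: for each multiplicity $k$ the orbits of period $2\pi k$ sweep out a single Morse--Bott family diffeomorphic to $\Sigma$. Because $\geodesic$ is simple, its Legendrian lift $\gamma$ is an embedded knot, and the surgery model of \cref{SOutlineSurgery} perturbs $\beta$ only inside a solid torus $N(\gamma)\cong S^1\times D^2$; outside $N(\gamma)$ the Reeb flow of $\beta_S$ still coincides with $V$, so its periodic orbits continue to foliate $\Sigma\setminus\pi(N(\gamma))$ in Morse--Bott families with boundary.

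Next I would study $R_{\beta_S}$ inside $N(\gamma)$ in coordinates adapted to the surgery model. In the spirit of the explicit computation of \cite{Alves1}, one expects $R_{\beta_S}\rest{N(\gamma)}$ to be Morse--Bott with a two-parameter family of periodic orbits indexed by pairs $(a,b)\in\Z_{>0}^2$ of winding numbers about the fiber and meridional directions of $N(\gamma)$, with periods
\[
\ell_{a,b} = a T_1 + b T_2 + O(1)
\]
for fixed $T_1,T_2>0$; coprime pairs correspond to primitive orbits. Distinct coprime pairs should lie in distinct primitive free homotopy classes $\rho_{a,b}$ of $M_S$, since they project to distinct classes in $H_1(N(\gamma);\Z)\cong\Z^2$. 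A Bourgeois-style Morse--Bott perturbation then produces a bounded number of nondegenerate generators of the cylindrical chain complex in each $\rho_{a,b}$; a parity computation of the Conley--Zehnder indices in the local surgery model, together with the invariance of free homotopy class under holomorphic cylinders, should rule out cancellations between orbits in different classes $\rho_{a,b}$. Combining these ingredients one would conclude
\[
\dim CH^{\leq T}\bigl(M_S,\ker(\beta_S)\bigr) \;\geq\; \#\bigl\{(a,b)\in\Z_{>0}^2 : \gcd(a,b)=1,\ a T_1 + b T_2 \leq T\bigr\} \;=\; \Theta(T^2).
\]

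Finally, because the filtered growth rate of cylindrical contact homology is an invariant of the contact structure and $\dim CH^{\leq T}(M_S,\beta')\leq N_T(\beta')$ for any nondegenerate contact form $\beta'$ with $\ker(\beta')=\ker(\beta_S)$, one obtains $N_T(\beta')\geq a T^2$ for some $a>0$. The main obstacle will be the Morse--Bott analysis inside $N(\gamma)$: both the enumeration of the two-parameter family of orbits and the verification that Conley--Zehnder parity prevents the differential from cancelling the generators in each $\rho_{a,b}$. This step is the technical heart of the argument and should closely parallel Alves's treatment of the geodesic contact form in \cite{Alves1}; however, it does not follow formally from his work because the Reeb dynamics of the fiber flow after surgery is qualitatively different, being Morse--Bott degenerate rather than Anosov, and so the local model and index computations must be carried out afresh.
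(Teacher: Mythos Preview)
Your outline is broadly along the same lines as the paper's proof, but the paper organizes the computation differently and replaces your proposed Conley--Zehnder parity argument by a much simpler observation, so let me compare.

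\textbf{Structure inside the surgery region.} Rather than indexing orbits by coprime pairs $(a,b)$ with periods $aT_1+bT_2+O(1)$, the paper trivializes the surgered neighborhood $N_S$ as a torus bundle $p_S\colon N_S\to(-2\epsilon,2\epsilon)$ and observes that $p_S^{-1}(w)$ is foliated by closed Reeb orbits precisely when $f(w)=2\pi p_w/q_w\in2\pi\Q$, with period $P(w)$ comparable to $q_w$. The quadratic count then comes from counting such rational slopes with $q_w\le T$ (a Farey-type count), not from lattice points under a line. Your period formula is not quite the right model here, though the outcome (quadratic growth) is the same.

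\textbf{Separating free homotopy classes.} You argue that distinct $(a,b)$ give distinct homology classes in $H_1(N(\gamma))$, but this alone does not give distinct free homotopy classes in $M_S$. The paper uses instead that the torus $\mathbb T_S$ is incompressible (by van Kampen), so distinct slopes on $\mathbb T_S$ give distinct free homotopy classes in $M_S$; this is where simplicity of the geodesic is used.

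\textbf{The key simplification.} The main difference is how nontriviality of $C\mathbb H^{\rho}$ is established for each such class $\rho$. You propose a Conley--Zehnder parity computation in the local model. The paper avoids indices entirely: after a standardization $\beta_S\rightsquigarrow\beta_0$ and a Morse--Bott perturbation, it notes that \emph{all} $R_{\beta_0}$-periodic orbits in a given class $\rho$ lie on a single torus $p_S^{-1}(w)$ and therefore have \emph{equal} period. Since holomorphic cylinders strictly decrease action, there are no cylinders between them, and in Bourgeois' cascade description the differential reduces to the Morse--Witten differential of a Morse function on $p_S^{-1}(w)/R_{\beta_0}\cong S^1$, yielding $C\mathbb H^{\rho}\cong\Q^2$ directly. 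This is both shorter and more robust than the parity route you outline, and it is the step you should aim for if you carry out the argument.

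\textbf{Outside the surgery region.} You are right that there the Reeb flow remains the fiber flow and is Morse--Bott over $\Sigma$ minus an annulus; the paper handles this with a Morse function $\phi$ on that surface, producing only linearly many generators, which do not affect the quadratic growth rate.

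The transfer to an arbitrary nondegenerate $\beta'$ via filtered contact homology and the commutative diagrams of \cref{THMFPCH} is exactly as you describe.
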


\subsubsection{Relation to other works on contact surgery and Reeb dynamics}

Weinstein surgery/handle attachment is an elementary building block and fundamental operation in contact/symplectic topology and has been largely studied from the topological point of view (for instance it can be used to construct specific or tight or fillable contact manifolds).
We only mention here works focusing on the Reeb dynamics.

A description of contact surgery with control of the Reeb vector field can be found in \cite{EtnyreGhrist}, where Etnyre and Ghrist construct tight contact structures and prove tightness using dynamical properties of the Reeb vector field (their desciption is  different from ours as they consider a surgery on a transverse knot and focus on the description of this surgery via tori).

In \cite{BEE}, Bourgeois, Ekholm and Eliashberg describe the effect of a Weinstein surgery on Reeb dynamics and contact homology. More precisely, they prove the existence of an exact triangle in any dimension connecting contact homologies of the initial manifold and the surgered manifold and a third term associated to the attaching sphere and called Legendrian contact homology. However, explicit computations are delicate even for our explicit examples, for instance as the Lengendrian contact homology is the homology of a huge complex. In contrast, our results give precise estimates in Reeb dynamics but for specific examples.

Our work is largely inspired by Alves' work on Reeb dynamics as explained above, note that he himself applied his methods to contact sugery. The study of Reeb flows with positive entropy comes from Macarini and Schlenk~\cite{MacariniSchlenk} of the unit cotangent bundle equipped with the standard contact structure. This has been developed by Macarini and Paternain~\cite{MacariniPaternain}, Alves~\cite{Alves1, Alves2, Alves3} and others. In \cite{AlvesEtAl}, Alves, Colin and Honda relate topological entropy of Reeb flows to the monodromy of an associated open book decomposition.

\section{Surgery and production of closed orbits}\label{SOutlineSurgery}
The surgery in \cite{FHLegendrian} on which this work is based came with some infelicitous conventions and an immaterial sign error, so we recapitulate some of the steps here with more explicit details. This is necessary also as a base for the proof of \cref{THMLargerTopEnt}, and for a supplementary result (\cref{PROPFoliationsOrientability}) that is needed later. Our surgery can be performed in a neighborhood of any Legendrian knot in a contact $3$-manifold.
We start with a description of the surgery in adapted coordinates near a Legendrian and then explain how to obtain such coordinates in the unit tangent bundle of a hyperbolic surface and how they are linked to the stable and unstable bundles.

\subsection{The surgery from the contact viewpoint}\label{SScontact}

Let $(M,\xi=\ker(\alpha))$ be a contact $3$-manifold and let $\gamma$ be a Legendrian knot in $M$. Then there exist coordinates
\[
(t,s,w)\in\Omega\dfn(-\eta,\eta)\times S^1\times(-\epsilon,+\epsilon),
\]
with $0<\epsilon<\eta/2\pi$ on a neighborhood of \(\gamma\) in which $\alpha=\dd t+w\,\dd s$ and $\gamma=\{0\}\times S^1\times\{0\}$. The \emph{surgery annulus} is $\{0\}\times S^1\times (-\epsilon,+\epsilon)$.
Note that in these coordinates $\alpha\wedge\dd\alpha = \d
t\wedge \dd w\wedge \dd s$ and $R_\alpha=\frac{\partial}{\partial t}$, so $\Omega$ is a flow-box chart. The surgeries split this chart into 2 one-sided flow-box neighborhoods of the surgery annulus, and while the initial transition map between these on $\{0\}\times S^1\times(-\epsilon,+\epsilon)$ is the identity, the surgered manifold $M_S$ is defined by imposing the desired twist (or shear) as the transition map on this annulus:
\begin{equation}\label{eqdefF}
F\colon S^1\times(-\epsilon,\epsilon)\to S^1\times(-\epsilon,\epsilon),\quad(s,w)\mapsto(s+f(w),w)
\end{equation}
with $ f\colon[-\epsilon,\epsilon]\to S^1$, $w\mapsto\exp(iqg(w/\epsilon))
$, $q\in\mathbb Z$, $g\colon\R\to[0,2\pi]$ nondecreasing smooth, $0\le g'\le4$ even, and
$g((-\infty,-1])=\{0\}$, $g([1,\infty))=\{2\pi\}$.
We specify that the transition map from $\{t<0\}$ to $\{t>0\}$ is used to identify points $(0^-,x)$ with $(0^+,F(x))$. With this choice one see that $F^*\alpha=\alpha+wf'(w)\,dw$
and hence that
\[F^*\dd \alpha=\dd \alpha\quad\text{and}\quad F^*(\alpha\wedge \dd \alpha)=\alpha\wedge \dd \alpha,\]
so $\alpha\wedge \dd \alpha$ is a well-defined volume on $M_S$. The vector field $R_\alpha$ on $M$ induces the \emph{Handel--Thurston} vector field $X_{\mathit{HT}}$ on $M_S$. Its flow preserves the Liouville volume defined by $\alpha\wedge \dd \alpha$ \cite[Corollary 3.3]{FHLegendrian}, and the total volume of the manifold is not changed by the surgery.

However, we have not yet produced a \emph{contact} flow: $F^*\alpha=\alpha +wf'(w)\dd w$, so $\alpha$ does not induce a contact form on $M_S$. A deformation yields a well-defined contact form $\alpha^\mp_h=\alpha\mp\dd h$ for $\pm t\geq 0$, where
\[
h(t,w)\dfn\frac12{\underbracket{\lambda(t)}_{\mathclap{\quad\lambda\colon\R\to[0,1]\text{ is a smooth bump function\strut}}}}\int_{-\epsilon}^wxf'(x)\,\dd x\text{ on }(-\eta,\eta)\times(-\epsilon, \epsilon)\text{ and }h=0\text{ outside}.
\]
satisfies $dh=\frac12wf'(w)\dd w$ on the surgery annulus and $h\equiv 0$ for $t$ close to $\pm\eta$. Hence $F^*(\alpha_h^+)=\alpha_h^-$ and $\alpha_h^\pm$ induces a contact form $\alpha_A$ on $M_S$. Its Reeb field is a time-change
\begin{equation}\label{eqXh}
R_{\alpha_A}\dfn\dfrac{X_{\mathit{HT}}}{1\pm \dd h(X_{\mathit{HT}})}
\end{equation}
of\/ $X_{\mathit{HT}}$ \cite[Theorem 4.2]{FHLegendrian}, which is well-defined because$\vert\dd h(X_{\mathit{HT}})\vert<1$ if\/ $0<\epsilon<\eta/2\pi$ \cite[Theorem 4.1]{FHLegendrian}. If one considers smaller $\epsilon$, it is possible to impose the condition $\vert\dd h(X_{\mathit{HT}})\vert<1/2$ and we will do so in \cref{SMoreContactFlows}.

The time-change that defines \(R_{\alpha_A}\) is a slow-down near the surgery annulus, which confounds comparisons of dynamical complexity because of the extra factor in Abramov's formula \eqref{eqAbramov}, so we study the vector field
\begin{equation}\label{eqDEFXh}
X_h\dfn cR_{\alpha_A}=R_{\alpha_A/c},
\end{equation}
where \(c\in\R\) is such that \(\displaystyle\int\dfrac{c}{1\pm \dd h(X_{\mathit{HT}})}\alpha\wedge\dd\alpha=1\) to compare entropies.

\subsection{Surgery on unit tangent bundle and Anosov flows}\label{SSAnosov}

We now explain how to perform a contact surgery on the unit tangent bundle of a hyperbolic surface $\Sigma$. Select a closed geodesic $\geodesic\colon S^1\to\Sigma$, $s\mapsto\geodesic(s)$ and consider the Legendrian knot $\gamma$  obtained by rotating the unit vector field along $\geodesic$ by the angle $\theta=\pi/2$. This knot is Legendrain as $H$ is tangent to $\gamma$ (see \cref{fig1}).
\begin{figure}[h]
\begin{center}
\reflectbox{\includegraphics[width=.6\textwidth]{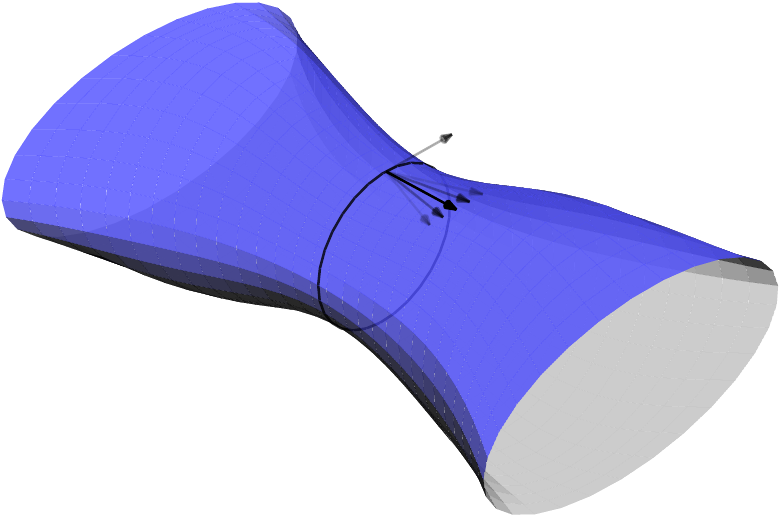}}
\caption{Surgery annulus in the base}
\label{fig1}
\end{center}
\end{figure}
Standard coordinates for $\alpha_A$ near $\gamma$ are obtained by flowing along the vertical field $V$ and then along the geodesic vector field $X$ \cite[Lemma 5.1]{FHLegendrian}: the surgery annulus is contained in the torus $\mathbb T$ above $\geodesic$ (see \cref{FIGSurgeryAnnulus}); it consists of vectors that are almost orthogonal to a chosen geodesic in a surface. Along $\gamma$, $E^+$ is spanned by a vector $V+H$ in the first quadrant.

\begin{figure}[h]
\begin{center}
\footnotesize
\includegraphics[width=.5\textwidth]{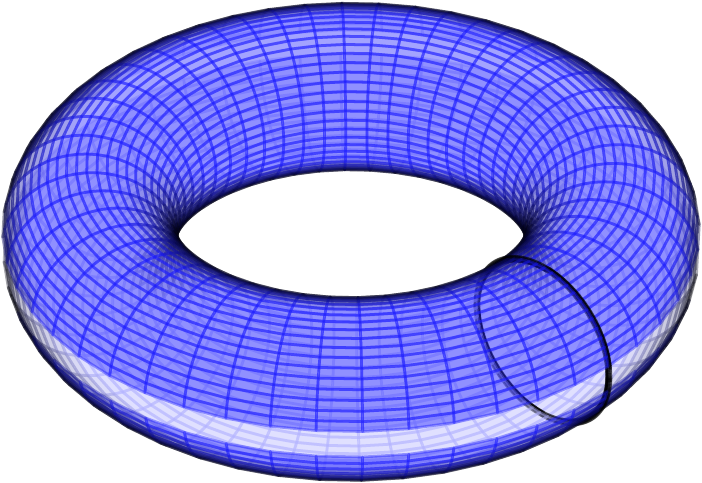}%
\includegraphics[width=.5\textwidth]{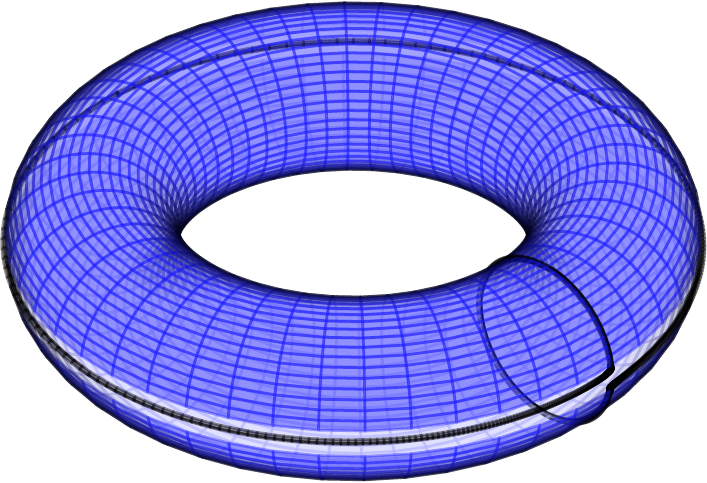}
\normalsize
\caption{Surgery annulus before and after surgery (\(q=1\))}
\label{FIGSurgeryAnnulus}
\end{center}
\end{figure}

To prove that the surgered flow is Anosov, \cite{FHLegendrian} uses Lyapunov--Lorentz metrics \cite[Claim 4.5 and Appendix A]{FHLegendrian}.

\begin{definition} The continuous Lorentz metrics $Q^+$ and $Q^-$ on $M$ are a pair of Lyapunov--Lorentz metrics for the flow $\phi^t$ generated by $X$ if there exists constants $a,b,c,T>0$ such that
\begin{enumerate}
\item $C^+\cap C^-=\emptyset$ where $C^\pm$ is the $Q^\pm$-positive cone;
\item $Q^\pm(X)=-c$;
\item for any $x\in M$, $v\in C^\pm(x)$ and $t>T$, $Q^\pm(D_x\phi^{\pm t}(v))\geq ae^{bt}Q^\pm(v)$
\item for any $x\in M$ $D_x\phi^{\pm T}\left(\overline{C^\pm(x)}\right)\setminus \{0\}\subset C^\pm\left(\phi^{\pm T}(x)\right)$
\end{enumerate}
\end{definition}

\begin{proposition}{\cite[Claim 4.5 and Appendix A]{FHLegendrian}}
A smooth flow $\phi^t$ is Anosov if and only if it admits a pair of Lyapunov--Lorentz metrics $Q^-$ and $Q^+$. The unstable foliation of the flow is then contained in the positive cone $Q^+$ and the stable foliation in the positive cone of\/ $Q^-$
\end{proposition}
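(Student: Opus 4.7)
The plan is to prove both implications via the standard cone-field characterization of hyperbolicity.

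\emph{Forward direction.} Starting from an Anosov splitting $TM=\R X\oplus E^+\oplus E^-$, I would first fix adapted (Lyapunov) norms $\|\cdot\|_\pm$ on $E^\pm$ satisfying $\|D\phi^{\pm t}v^\pm\|_\pm\geq e^{\lambda t}\|v^\pm\|_\pm$ for $t\geq 0$ and $v^\pm\in E^\pm$, with $\lambda>0$, and then define pointwise
\begin{equation*}
Q^\pm(aX+v^++v^-)\dfn\|v^\pm\|_\pm^2-\|v^\mp\|_\mp^2-a^2.
\end{equation*}
Each $Q^\pm$ has signature $(1,2)$, satisfies $Q^\pm(X)=-1$, and has positive cone $C^\pm=\{\|v^\pm\|_\pm^2>\|v^\mp\|_\mp^2+a^2\}$. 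The cones are disjoint because a common vector would obey both $\|v^+\|_+>\|v^-\|_-$ and $\|v^-\|_->\|v^+\|_+$. The expansion~(3) and strict invariance~(4) then follow from the single identity
\begin{equation*}
Q^+(D\phi^t v)\geq e^{2\lambda t}\|v^+\|_+^2-e^{-2\lambda t}\|v^-\|_-^2-a^2=e^{2\lambda t}Q^+(v)+(e^{2\lambda t}-e^{-2\lambda t})\|v^-\|_-^2+(e^{2\lambda t}-1)a^2,
\end{equation*}
which gives condition (3) with $b=2\lambda$; the case of $Q^-$ is symmetric under time reversal.

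\emph{Backward direction.} Assume a pair of Lyapunov--Lorentz metrics exists. Define
\begin{equation*}
E^+_x\dfn\bigcap_{t\geq 0}D_{\phi^{-t}x}\phi^t\bigl(\overline{C^+_{\phi^{-t}x}}\bigr),\qquad E^-_x\dfn\bigcap_{t\geq 0}D_{\phi^{t}x}\phi^{-t}\bigl(\overline{C^-_{\phi^{t}x}}\bigr).
\end{equation*}
By condition (4) these are nested decreasing intersections of closed cones, hence closed $D\phi^t$-invariant cones contained in $\overline{C^\pm_x}$. The key step is to show that each $E^\pm_x$ is a one-dimensional subspace varying continuously with~$x$. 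I would invoke the classical observation that the interior of the positive cone of a Lorentz form is a projective model of the hyperbolic disk, and that a linear map satisfying both the strict invariance (4) and the $Q$-expansion (3) projectivizes to a uniform contraction of this disk, with contraction constant controlled by the uniform $a$, $b$, $T$ of the definition. Iterating, the nested cones collapse exponentially fast to a single projective direction, yielding a continuous line bundle $E^+_x\subset\overline{C^+_x}$, and symmetrically for $E^-_x$. The disjointness $C^+\cap C^-=\emptyset$ together with $Q^\pm(X)=-c<0$ then places $\R X$, $E^+_x$, $E^-_x$ in general position and spans $TM$.

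Once the splitting is in place, the Anosov estimates in \eqref{AnosovCondition} follow: $E^+$ lies in a subcone strictly interior to $C^+$, on which $Q^+$ is equivalent (up to positive constants, by compactness of $M$) to the square of any background Riemannian norm. Consequently $Q^+(D\phi^t v)\geq ae^{bt}Q^+(v)$ on $E^+$ translates into a uniform exponential expansion $\|D\phi^t v\|\geq C'e^{(b/2)t}\|v\|$, with the analogous estimate for $E^-$ under backward time.

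The main obstacle is the projective-contraction step in the backward direction: showing that nested, strictly invariant Lorentz cones collapse to a single line (not merely to a proper subcone) at a uniform rate, and that the resulting line depends continuously on~$x$. This rests on the classical fact---in the tradition of Wojtkowski and Liverani on cone dynamics---that a $Q$-expanding, strictly cone-preserving linear map acts as a uniform contraction on the hyperbolic (Klein) model of the projectivized positive cone, uniformity being inherited from the uniform constants in (3)--(4) and the compactness of~$M$.
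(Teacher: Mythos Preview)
The paper does not prove this proposition; it merely quotes it from \cite[Claim 4.5 and Appendix A]{FHLegendrian} and then uses it. So there is no ``paper's own proof'' to compare against. Your sketch is the standard cone-field argument (in the spirit of Wojtkowski and Katok--Burns) and is essentially what one finds in the cited reference: in the forward direction one builds the Lorentz forms from an adapted norm on the Anosov splitting, and in the backward direction one extracts the invariant line bundles as the intersection of iterated cones, using that a $Q$-expanding cone-preserving linear map is a projective contraction of the Klein model. Both directions are correctly outlined; the only point worth tightening is the claim that $\R X$, $E^+$, $E^-$ span $T_xM$, which in dimension~3 follows because $E^\pm\subset C^\pm$ forces $Q^\pm(E^\pm)>0$ while $Q^\pm(X)<0$ and $Q^\pm(E^\mp)\le0$, so no nontrivial linear relation $aX+v^++v^-=0$ can hold.
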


For the geodesic flow, one can choose $Q^\pm=\pm\dd w\dd s-c\dd t^2$ in the coordinates $(t,s,w)$. Understanding how the surgery affects the positive cones of\/ $Q^\pm$ is crucial to understand why the condition $q$ positive is essential to obtain an Anosov flow after surgery. We restrict attention to the trace of these cones in the $sw$-plane and consider the geometry of the action of \(F\) by differentiating \eqref{eqdefF} to see the twist (shear) in \((s,w)\)-coordinates:
\[
DF=\begin{pmatrix}1&f'(w)\\0&1\end{pmatrix}.
\]
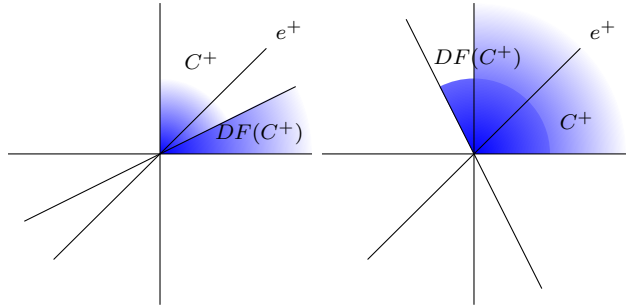
\begin{figure}[h]
\begin{tikzpicture}
\begin{scope}
\clip (0,0) -- (1,0) arc (0:90:1) -- (0,0);
\shade[inner color=blue,outer color=white] (0,0) circle (1);
\end{scope}
\begin{scope}
\clip (0,0) -- (2,0) arc (0:27:2) -- (0,0);
\shade[inner color=blue,outer color=white] (0,0) circle (2);
\end{scope}
\draw (-2,0) -- (2,0);
\draw (0,-2) -- (0,2);
\draw (-1.4,-1.4) -- (1.4,1.4);
\draw (-4/2.24,-2/2.24)-- (4/2.24,2/2.24);
\draw (1.4,1.4) node[anchor=south west] {\footnotesize$e^+$};
\draw (0.2,1) node[anchor=south west] {\footnotesize$C^+$};
\draw (0.6,0) node[anchor=south west] {\footnotesize$DF(C^+)$};
\end{tikzpicture}
\begin{tikzpicture}
\begin{scope}
\clip (0,0) -- (2,0) arc (0:90:2) -- (0,0);
\shade[inner color=blue,outer color=white] (0,0) circle (2);
\end{scope}
\begin{scope}
\clip (0,0) -- (1,0) arc (0:117:1) -- (0,0);
\shade[inner color=blue,outer color=white] (0,0) circle (2.5);
\end{scope}
\draw (-2,0) -- (2,0);
\draw (0,-2) -- (0,2);
\draw (-1.4,-1.4) -- (1.4,1.4);
\draw (-2/2.24,4/2.24)-- (2/2.24,-4/2.24);
\draw (1.4,1.4) node[anchor=south west] {\footnotesize$e^+$};
\draw (1.7,.2) node[anchor=south east] {\footnotesize$C^+$};
\draw (0,1) node[anchor=south] {\footnotesize\ $DF(C^+)$};
\end{tikzpicture}
\caption{Action of a positive and negative twist (shear) on the first quadrant}
\label{fig3}
\end{figure}
Therefore, if\/ $q>0$, the image of the first and third quadrant (ie the trace of\/ $\overline{C^+}$) is a subcone of the first and third quadrant that shares the horizontal axis (see \cref{fig3}). Roughly speaking, this implies that the cone field $C^+$ is preserved by the surgery and one can define a new cone field one the surgered manifold by
\begin{align*}
Q_0^\pm&=\pm\dd w\dd s-c\dd t^2, &\text{ if } t<0,\\
Q_1^\pm&=\pm\left(\dd w\dd s-b(t)f'(w)\dd w^2\right)-c\dd t^2, &\text{ if } t>0,
\end{align*}
where $b\colon\R\to\R^+$ smooth with $b((-\infty,0])=\{1\}$, $b([\eta,\infty))=\{0\}$ and
$b'<0$ on $(-\eta,\eta)$.
Then for $t=0$, $F^*Q_1=Q_0$ and $Q_0^\pm$ and $Q_1^\pm$ induces a pair of Lyapunov--Lorentz metrics on $M_S$. If\/ $q<0$, the cones are not preserved and the flow is not Anosov:

\begin{proposition}\label{PRPNotAnosovIfq<0}
  The \((1,q)\)-Dehn surgery defined by \(F\) in \eqref{eqdefF} does not produce an Anosov flow if \(-q/\epsilon\) is large enough, i.e., if either $q<0$ is fixed and $\epsilon$ is small enough or if $\epsilon>0$ is fixed and $q<0$ with $|q|$ big enough.
\end{proposition}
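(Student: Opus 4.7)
The plan is to argue by contraposition using the Lyapunov--Lorentz characterization of Anosov flows recalled just above: for $q<0$ with $-q/\epsilon$ large enough, no invariant pair of Lyapunov--Lorentz metrics can exist on $M_S$, because the shear $DF$ introduced at the surgery annulus rotates the unstable direction of the underlying geodesic flow past its stable direction.

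First I would compute the action of $DF$ in the $(e^+,e^-)$-basis adapted to the geodesic flow. Along the Legendrian, where $\partial/\partial s\sim H$ and $\partial/\partial w=V$, expressing the shear $(v_s,v_w)\mapsto(v_s+f'(w)v_w,v_w)$ in the $(e^+,e^-)$-basis via $e^\pm=V\pm H$ yields, modulo the flow direction,
\[
DF(e^+)=\tfrac{2+f'(w)}{2}\,e^+-\tfrac{f'(w)}{2}\,e^-.
\]
Hence $[DF(e^+)]\in\R P^1$ coincides with $[e^+]$ at $f'(w)=0$, crosses $[e^-]$ exactly at $f'(w)=-2$, and for $f'(w)<-2$ lies on the arc of $\R P^1$ opposite $[e^+]$ across $[e^-]$. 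Since $f'(w)=qg'(w/\epsilon)/\epsilon$ with $g'$ continuous, nonnegative, and nontrivial (indeed $\int g'=g(\infty)-g(-\infty)=2\pi$), an intermediate-value argument produces $w^*\in(-\epsilon,\epsilon)$ with $f'(w^*)<-2$ whenever $-q/\epsilon$ is large enough.

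If the surgered flow were Anosov, its invariant cone $C^+$ would in the outer region be an arc containing $[e^+]$ and disjoint from $[e^-]$; invariance of $C^+$ across the annulus would then force $[DF(e^+)]\in C^+$ at the image point, which is impossible once $f'(w^*)<-2$. The main difficulty is propagating the outer-region cone constraint into the deformation region, where the flow is a genuine time-change of the jump picture and $E^+$ may drift from $\R e^+$. I would handle this either by shrinking the cutoff $\eta$ in $h$ so that the drift is negligible next to the shift of $[DF(e^+)]$ toward $[e^-]$, or by working directly with the jump vector field $X_{\mathit{HT}}$: its orbits coincide with those of $R_{\alpha_A}$, and on each of the two pieces its invariant directions are exactly $\R e^\pm$, so the shear $DF$ would be forced to carry one to the other---impossible for $f'(w^*)<-2$.
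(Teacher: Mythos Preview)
Your computation of $DF(e^+)$ in the $(e^+,e^-)$-frame is correct, and the observation that $[DF(e^+)]$ crosses $[e^-]$ once $f'(w)<-2$ is the right geometric picture. The gap is in turning this into a contradiction with the Anosov property: you tacitly assume that the unstable direction (or cone) of the \emph{surgered} flow equals $\mathbb{R}e^+$ at or near the annulus. This is not justified. Even where the flow coincides locally with the geodesic flow, the Anosov bundles $E^\pm$ are determined by the \emph{entire} orbit, which typically crosses the annulus repeatedly; so $E^+(x)$ can lie far from $\mathbb{R}e^+$ even at points where the local generator is $X$. Your Fix~2 makes this conflation explicit: ``on each of the two pieces its invariant directions are exactly $\mathbb{R}e^\pm$'' confuses the local infinitesimal expanding directions of the geodesic flow with the global invariant bundles of $X_{\mathit{HT}}$, which are not the same thing. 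Fix~1 only concerns the time-change from $X_{\mathit{HT}}$ to $R_{\alpha_A}$ and does nothing about this global issue.

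The paper supplies the missing ingredient by analysing not $DF$ alone but the full first-return map to the annulus. Between consecutive hits the orbit runs under the geodesic flow for at least some $T_{\min}>0$, and this contracts every direction projectively into a cone of fixed aperture $K=K(T_{\min})$ about $[e^+]$; this is what pins down, a~priori, where a putative unstable direction must lie just before each crossing, without assuming it equals $\mathbb{R}e^+$. One then needs $f'<-2K$ (not merely $<-2$) so that $DF$ throws the entire half-cone $\{0\le a\le Kb\}$ past $e^-$ into $\{a\le -Kb\le 0\}$, after which the geodesic return carries it into the \emph{opposite} half-cone $\{0\ge a\ge Kb\}$ about $-e^+$. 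Since no such reversal occurs along orbits that miss the surgery region, this is incompatible with the continuity of an invariant cone field.
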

\begin{proof}
There is a lower bound on the return time to the surgery region, so there is a $K>0$ such that the half-cone \(a\le-Kb\le0\) is mapped into the half-cone \(0\ge a\ge Kb\) by the differential of the return map (see \cref{FIGPRPNotAnosovIfq<0}). Here, we use coordinates \((a,b)\) in the \((s,w)\)-plane. Now suppose that $q/\epsilon<-2K$ and that the function \(g\) in the definition of \(f\) (after \eqref{eqdefF}) is chosen with monotone derivative on \((0,\infty)\). Then \(f'(0)<q/\epsilon\), so \(f'(w)<q/\epsilon\) for small \(w\). This has the effect that for such \(w\), the half-cone around \(e^+\) given  by \(0\le a\le Kb\) is mapped by \(DF\) into the half-cone \(a\le-Kb\le0\), which is on the other side of \(e^-\). The return map then sends it into the half-cone \(0\ge a\ge Kb\), which is the other half of the cone in which we started. This is incompatible with the existence of a \emph{continuous} invariant cone field that extends to points that miss the surgery region, and hence with the Anosov property.
\end{proof}
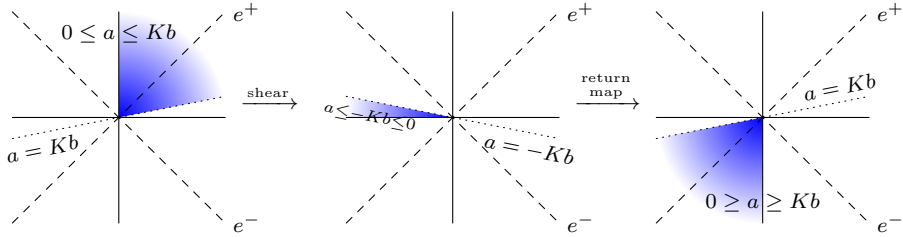
\begin{figure}[ht]\footnotesize
\begin{tikzpicture}[line cap=round,line join=round,>=triangle 45,x=1.4cm,y=1.4cm]
\begin{scope}
\clip (0,0) -- (1,.2) arc (11.46:90:1.02) -- (0,0);
\shade[inner color=blue,outer color=white] (0,0) circle (1);
\end{scope}
\draw (-1,0) -- (1,0) node[anchor=south west] {{$\ \xrightarrow{\text{\Tiny shear}}$}};
\draw (0,-1) -- (0,1) node[anchor=south] {};
\draw[style=dashed] (-1,-1) -- (1,1) node[anchor=west] {$e^+$};
\draw[style=dashed] (-1,1) -- (1,-1) node[anchor=west] {$e^-$};
\draw[style=dotted] (-1,-.2) -- node[very near start,sloped,below]{$a=Kb$} (1,.2);
\node at (0,.8) {$0\le a\le Kb$};
\end{tikzpicture}
\begin{tikzpicture}[line cap=round,line join=round,>=triangle 45,x=1.4cm,y=1.4cm]
\begin{scope}
\clip (0,0) -- (-1,.2) arc (0:-11.46:1.02) -- (0,0);
\shade[inner color=blue,outer color=white] (0,0) circle (1);
\end{scope}
\draw (-1,0) -- (1,0) node[anchor=south west] {$\ \xrightarrow{\substack{\text{\Tiny return}\\\text{\Tiny map}}}$};
\draw (0,-1) -- (0,1) node[anchor=south] {};
\draw[style=dashed] (-1,-1) -- (1,1) node[anchor=west] {$e^+$};
\draw[style=dashed] (-1,1) -- (1,-1) node[anchor=west] {$e^-$};
\draw[style=dotted] (-1,.2) -- node[very near start,sloped,below]{$\scriptstyle a\le-Kb\le0$} node[very near end,sloped,below]{$a=-Kb$} (1,-.2);
\end{tikzpicture}
\begin{tikzpicture}[line cap=round,line join=round,>=triangle 45,x=1.4cm,y=1.4cm]
\begin{scope}
\clip (0,0) -- (-1,-.2) arc (-168.54:-90:1.02) -- (0,0);
\shade[inner color=blue,outer color=white] (0,0) circle (1);
\end{scope}
\draw (-1,0) -- (1,0) node[anchor=south west] {};
\draw (0,-1) -- (0,1) node[anchor=south] {};
\draw[style=dashed] (-1,-1) -- (1,1) node[anchor=west] {$e^+$};
\draw[style=dashed] (-1,1) -- (1,-1) node[anchor=west] {$e^-$};
\draw[style=dotted] (-1,-.2) -- node[very near end,sloped,above]{$a=Kb$} (1,.2);
\node at (0,-.8) {$0\ge a\ge Kb$};
\end{tikzpicture}
\caption{The cones in the proof of \cref{PRPNotAnosovIfq<0}}\label{FIGPRPNotAnosovIfq<0}
\end{figure}

Note that one can perform a positive surgery on an Anosov flow (and therefore obtain another Anosov flow) then undo it by performing a negative surgery and obtain again an Anosov flow. This is compatible with the satement of \cref{PRPNotAnosovIfq<0}, as $q$ and $\epsilon$ are fixed in the negative surgery (and thus \cref{PRPNotAnosovIfq<0} does not apply).

Returning to the case of positive \(q\), we note from the preceding:
\begin{proposition}\label{PROPFoliationsOrientability}
The stable and unstable foliations of\/ $(M_S,\alpha_A)$ as described in \cref{THMMain} are orientable.
\end{proposition}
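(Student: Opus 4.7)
The plan is to exhibit continuous nowhere-vanishing sections of the line bundles $E^+$ and $E^-$ on $M_S$, which yields orientability of the foliations $W^+$ and $W^-$. Outside the surgery coordinate patch $\Omega$, the Anosov splitting of $M_S$ coincides with that of the unperturbed geodesic flow on $M$, so the global vector fields $V+H$ and $V-H$ provide canonical orientations of $E^+$ and $E^-$; the task is therefore to extend these orientations through $\Omega$ and, in particular, across the surgery annulus $\{t=0\}$.

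The decisive observation is that, in the $(t,s,w)$-Darboux coordinates, every Lyapunov--Lorentz cone $C^\pm_0$ (for $t<0$) and $C^\pm_1$ (for $t>0$) misses the plane spanned by $\partial_t$ and $\partial_s$. Writing a tangent vector as $a\,\partial_t+b\,\partial_s+c\,\partial_w$, the defining inequalities of these cones all take the form $\pm c\bigl(b-b(t)f'(w)c\bigr)>\kappa a^2\geq 0$, which forces $c\neq 0$. Since $E^\pm\subset C^\pm$ at every point of $\Omega$, the sign of $c$ (the $\partial_w$-component) provides a well-defined local orientation of $E^\pm$ on $\Omega$.

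Next, consider the identification gluing the two sides of the surgery annulus, $(0^-,s,w)\sim(0^+,s+f(w),w)$. Its pushforward on tangent vectors satisfies
\[
\partial_t\mapsto\partial_t,\qquad \partial_s\mapsto\partial_s,\qquad \partial_w\mapsto\partial_w+f'(w)\,\partial_s,
\]
so the $\partial_w$-component $c$ of any tangent vector is invariant under the identification. Hence the orientation ``$c>0$'' of $E^\pm$ agrees on the two sides of the annulus.

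It remains to match this orientation with the bulk orientation furnished by $V\pm H$. The Darboux coordinates of \cite[Lemma~5.1]{FHLegendrian} are built by first flowing along $V$ and then along $X$ starting from $\gamma$, so on $\gamma$ one has $\partial_w=V$ and $\partial_s$ is (positively) proportional to $H$. Consequently $V\pm H$ has $\partial_w$-component $+1>0$ along $\gamma$, and by continuity---together with the fact that $V\pm H$ stays inside the relevant cone, which avoids $\{c=0\}$---this component remains positive throughout $\Omega$. The two orientations therefore agree, which assembles into a continuous nowhere-vanishing section of each $E^\pm$ on all of $M_S$. The only real obstacle is this final bookkeeping identification of $\partial_w$ and $\partial_s$ along $\gamma$; the core content is the invariance of the $\partial_w$-component under the gluing.
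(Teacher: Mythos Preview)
Your core computation---that every vector in the Lyapunov--Lorentz cones $C^\pm_0,C^\pm_1$ has nonzero $\partial_w$-component, and that the gluing $DF$ preserves this component---is correct and is essentially equivalent to the paper's observation that $\partial_s$ lies on the boundary of each cone and is fixed by $DF$. Both statements express the same fact: $DF$ respects the decomposition of each cone into its two connected components.

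There is, however, a genuine gap in your framing. You assert that outside $\Omega$ the Anosov splitting of the surgered flow coincides with that of the geodesic flow, so that $V\pm H$ are literally sections of $E^\pm$ there. This is false: the bundles $E^\pm$ are determined by the \emph{asymptotic} behaviour of orbits, and the surgery alters orbits that pass through $\Omega$ even while they are outside it. The new $E^\pm$ need not equal $\operatorname{span}(V\pm H)$ anywhere. Consequently your final step, which claims to have produced a continuous nowhere-vanishing section of $E^\pm$, does not actually produce one.

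The fix is exactly what the paper does and what your own argument already contains implicitly: orient the \emph{cone field} $C^\pm$ rather than $E^\pm$ directly. Outside the region where $Q^\pm$ is modified, the cone is the standard geodesic-flow cone and $V\pm H$ (which lies in it) picks a component; inside $\Omega$, the sign of the $\partial_w$-component picks a component; your bookkeeping paragraph shows these agree on the overlap, and your gluing computation shows consistency across $\{t=0\}$. Since $E^\pm\subset C^\pm$ everywhere, an orientation of the cone field induces one on $E^\pm$. Once you replace ``section of $E^\pm$'' by ``orientation of $C^\pm$'', your proof is complete and essentially identical to the paper's.
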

\begin{proof}
The strong stable foliation is contained in the
positive cone of\/ $Q^-$ and the strong unstable foliation in the positive cone of\/ $Q^+$, so the stable foliation is orientable if and only if the positive cone of\/ $Q^-$ is orientable (an orientation of the positive cone is a choice of a connected component of this cone). The stable and unstable foliations of the unit tangent bundle over a hyperbolic surface are
orientable. Additionally, $Q^-\left(\frac{\partial}{\partial s},\frac{\partial}{\partial s}\right)=0$ and $F^*\frac{\partial}{\partial s}=\frac{\partial}{\partial s}$, so the surgery preserves the orientation of\/ $Q^-$, and $Q^-$ is orientable. It implies that $Q^+$ is orientable.
\end{proof}

\subsection{Impact on entropy}
The nature of the surgery map then implies:
\begin{proposition}\label{PRPsameLyapunov}
If \(q\ge0\), then $\hmu(X_{\mathit{HT}})\ge\hmu(X)$.
\end{proposition}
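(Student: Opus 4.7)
The plan is to reduce the claim to a comparison of positive Lyapunov exponents via Pesin's entropy formula and then track the derivative cocycle of $X_{\mathit{HT}}$ along orbits using the surgery description. The case $q=0$ is trivial since then $F=\Id$ and $X_{\mathit{HT}}=X$, so assume $q>0$; the flow $X_{\mathit{HT}}$ is then Anosov by \cref{SSAnosov}. Both $X$ on $M$ and $X_{\mathit{HT}}$ on $M_S$ are $C^\infty$ volume-preserving Anosov flows with one-dimensional expanding direction, so their Liouville measures are ergodic and by Pesin's formula (last bullet of \cref{SBSEntropy}) $\hmu(X)=\chi^+_X$ and $\hmu(X_{\mathit{HT}})=\chi^+_{\mathit{HT}}$, both interpreted as a.e.\ constant positive Lyapunov exponents. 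For the constant-curvature geodesic flow, $\chi^+_X=1$ in the standard normalization, so it suffices to prove $\chi^+_{\mathit{HT}}\ge1$.

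First I would reuse the Lyapunov--Lorentz forms $Q_0^\pm,\,Q_1^\pm$ on $M_S$ from \cref{SSAnosov}: outside the surgery flow-box they agree with the geodesic-flow form $Q^\pm=\pm\dd w\,\dd s-c\,\dd t^2$, and within the flow-box charts both $X_{\mathit{HT}}$ and $X$ are $\partial_t$, so the cocycles of $X_{\mathit{HT}}$ and $X$ coincide on orbit segments that do not cross the surgery annulus $A=\{0\}\times S^1\times(-\epsilon,\epsilon)$. Consequently, in the flow-box chart, $D\phi^T_{\mathit{HT}}$ differs from $D\phi^T_X$ only by an application of $DF$ at each crossing of $A$.

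The decisive computation is that $DF=\bigl(\begin{smallmatrix}1&f'(w)\\0&1\end{smallmatrix}\bigr)$ does not decrease $Q^+$ on the unstable cone $C^+$. Since $q\ge0$ and $g$ is nondecreasing, $f'(w)\ge0$; a vector $(a,b)$ in the first quadrant of the $(s,w)$-plane maps to $(a+f'(w)b,b)$, which remains in the first quadrant and satisfies $(a+f'(w)b)\cdot b\ge ab$. Thus $Q^+(DF(v))\ge Q^+(v)$ for $v\in C^+$. Combining this with the geodesic-flow Lyapunov--Lorentz inequality $Q^+(D\phi^t_X v)\ge ae^{bt}Q^+(v)$ (where $b=2$ matches the geodesic-flow exponent $\chi^+_X=1$) on each segment between crossings, we obtain for any $v\in C^+(x)$ and $T>0$,
\[
Q^+(D\phi^T_{\mathit{HT}}v)\;\ge\;a e^{bT}Q^+(v).
\]
Taking logarithms, dividing by $T$, and passing to the limit gives $\chi^+_{\mathit{HT}}\ge1=\chi^+_X$, hence $\hmu(X_{\mathit{HT}})\ge\hmu(X)$ by Step~1.

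The main obstacle is making rigorous the chart-level statement ``the cocycle of $X_{\mathit{HT}}$ equals that of $X$ composed with $DF$ at each crossing of $A$,'' i.e.\ checking that the derivative comparison is intrinsic and independent of the auxiliary Riemannian metric on $M_S$ up to multiplicative constants that wash out in the asymptotic rate, and that the finite-time constants $a$ in the Lyapunov--Lorentz inequality can be chosen uniformly in $x$ despite the distinct atlas structures of $M$ and $M_S$.
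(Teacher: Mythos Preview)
Your overall strategy---reduce to Lyapunov exponents via Pesin, decompose $D\phi^T_{\mathit{HT}}$ into geodesic-flow pieces interspersed with shears $DF$, and show $DF$ cannot decrease the relevant quantity---is exactly the paper's approach. The difference is in the choice of quantity: you track the Lorentz form $Q^+$, while the paper tracks the (discontinuous) norm $\|v\|_+\dfn$ length of the projection of $v$ to $E^+$ along $E^-\oplus\R X$ (for the original geodesic flow). The paper then checks geometrically that the positive shear $DF$ satisfies $\|DF(v)\|_+\ge\|v\|_+$, which is the analogue of your $Q^+(DF(v))\ge Q^+(v)$.

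The gap in your version is the passage from segment-wise estimates to the global one. Invoking the abstract Lyapunov--Lorentz inequality $Q^+(D\phi^t_Xv)\ge a e^{bt}Q^+(v)$ on each of the $n$ geodesic segments between crossings yields $Q^+(D\phi^T_{\mathit{HT}}v)\ge a^{\,n}\,e^{bT}Q^+(v)$, not $a\,e^{bT}Q^+(v)$ as you wrote; since the number of crossings satisfies $n\asymp T$ (bounded return time), the resulting exponent bound is $1+\tfrac{n}{T}\log a$, which is strictly below $1$ whenever $a<1$. Worse, the Lyapunov--Lorentz inequality is only asserted for $t>T_0$, so short segments are not covered at all. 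Your closing remarks about ``constants washing out'' and ``uniformity in $x$'' do not address this: the problem is multiplicative accumulation over unboundedly many segments, not lack of uniformity. The paper's choice of $\|\cdot\|_+$ sidesteps the issue entirely because for the constant-curvature geodesic flow one has the \emph{exact} scaling $\|D\phi^t_Xv\|_+=e^t\|v\|_+$ for all $t\ge0$ (no constant, no threshold), so the composition is clean: $\|D\phi^1_{\mathit{HT}}v\|_+\ge e\|v\|_+$ directly. To repair your argument you would need to replace the abstract Lyapunov--Lorentz form by one with this exact growth---and once you unwind that, you arrive at essentially the paper's $\|\cdot\|_+$.
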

\begin{proof}
By the Pesin entropy formula it suffices to show that the positive Lyapunov
exponent of \(X_{\mathit{HT}}\) is no less than that of \(X\). Volume-preserving Anosov
3-flows are ergodic \cite[Theorem 20.4.1]{KatokHasselblatt}, so the
positive Lyapunov exponent, being a flow-invariant bounded measurable
function, is a.e.\ a constant. The earlier observation that for the
geodesic flow on a hyperbolic surface the expanding vector is of the form
\(e^te^+\) means that the Lyapunov exponent of (the normalized) Liouville measure is 1.
Therefore, we will show that the positive Lyapunov exponent of\/ $X_{\mathit{HT}}$ is
at least 1. To that end we verify that the differential of its time-1 map
expands unstable vectors by at least a factor of \(e\) with respect to a
suitable norm.

For the geodesic flow the Sasaki metric induces a natural norm, and this norm is what is called an \emph{adapted} or \emph{Lyapunov} norm: for unstable vectors, this norm grows by exactly \(e^t\) under the flow, and on each tangent space it is a product norm. Our argument involves only vectors in unstable cones, so we pass to a norm \(\|\cdot\|_+\) that is (uniformly) equivalent when restricted to such vectors: the norm of the unstable component. Geometrically, this means that at each point we project tangent vectors to \(E^+\) along \(E^-\oplus\mathbb R X$ and take the length of this unstable projection as the norm of the vector. Thus, \(\|Dg^t(v)\|_+=e^t\|v\|_+\) for \(t\ge0\).

The proof of hyperbolicity of \(X_{\mathit{HT}}\) shows that the cone field defined by
the Lyapunov--Lorentz functions is well-defined on the surgered manifold
and invariant under \(X_{\mathit{HT}}\). Thus, this adapted norm for the geodesic flow defines a (bounded, though discontinuous) norm \(\|\cdot\|_+\) on unstable vectors for the flow \(\varphi^t\) defined by\(X_{\mathit{HT}}\). We now show that \(\|D\varphi^1(v)\|_+\ge e\|v\|_+\) for any \(v\) in an unstable cone. This is clear (with equality) when the underlying orbit segment does not meet the surgery annulus because the action is that of the geodesic flow. If there is an encounter with the surgery annulus at time \(t\in(0,1]\), then \(v'\dfn D\varphi^t(v)\) satisfies \(\|v'\|_+=e^t\|v\|_+\), and we will check that \(v''\dfn DF(v')\) satisfies \(\|v''\|_+\ge\|v'\|\), which implies that \(\|D\varphi^1(v)\|_+=\|D\varphi^{1-t}(v'')\|_+=e^{1-t}\|v''\|_+\ge e^{1-t}\|v'\|_+=e^{1-t}e^t\|v\|_+=e\|v\|_+\), as required.

That \(\|DF(v')\|_+\ge\|v'\|\) follows from the same argument as hyperbolicity of \(X_{\mathit{HT}}\) as suggested by \cref{FIGShearGrowsUnstable}, which superimposes the tangent spaces at some \(x\) and \(F(x)\) in the surgery annulus (using the identification from the canonical isometries between these tangent spaces). \(DF\) is a positive shear, and in the \(H\)-\(V\)-frame in the figure the addition of a multiple of the projection of $\frac{\partial}{\partial s}$ (which is close to $H$) by a positive shear results in an increase in the projection to \(E^+\), which is spanned by \(e^+=V+H\).

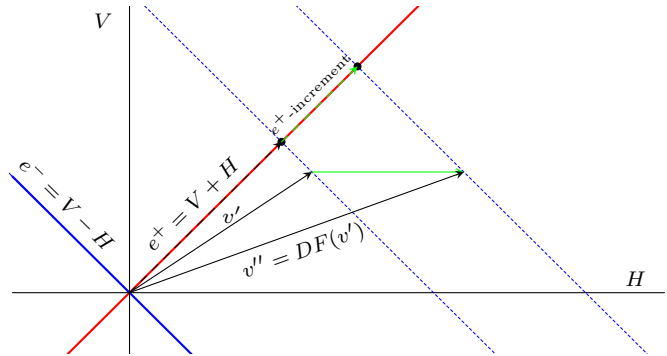
\begin{figure}[ht]
\begin{tikzpicture}[line cap=round,line join=round,>=stealth,x=2cm,y=2cm]
\clip(-0.79,-0.4) rectangle (3.52,1.9);
\draw[red,thick](-0.77,-0.77) -- (3.52,3.52);
\draw [blue] (-1,1) -- (0,0)node[midway,sloped,above,black]{\footnotesize$e^-=V-H$};
\draw[blue,thick](-0.77,0.77) -- (3.52,-3.52);
\draw (0,-0.79) -- (0,3.51);
\draw [domain=-0.77:3.52] plot(\x,{(-0-0*\x)/1});
\draw [dash pattern=on 1pt off 1pt,color=blue,domain=-0.77:3.52]
plot(\x,{(-3--1*\x)/-1});
\draw [dash pattern=on 1pt off 1pt,color=blue,domain=-0.77:3.52]
plot(\x,{(-2--1*\x)/-1});
\fill (1.5,1.5) circle (1.5pt);
\fill (1,1) circle (1.5pt);
\draw [->,dashed] (0,0) -- (1,1)node[midway,sloped,above,black]{\footnotesize$e^+=V+H$};
\draw (-.05,1.8) node[anchor=east] {\footnotesize$V$};
\draw (3.2,.2) node[anchor=north west] {\footnotesize$H$};
\draw [->,color=green] (1.2,.8) -- (2.2,.8)node[midway,above,black]{};
\draw [->] (0,0) -- (1.2,.8)node[midway,above,black]{\footnotesize\quad$v'$};
\draw [->,color=green,dashed] (1,1) -- (1.5,1.5)node[midway,sloped,above,black] {\Tiny $e^+$-increment};
\draw [->] (0,0) -- (2.2,.8)node[midway,sloped,below,black]{\footnotesize$v''=DF(v')$};
\end{tikzpicture}
\caption{\(DF\) increases the unstable component}\label{FIGShearGrowsUnstable}
\end{figure}
\end{proof}

\begin{remark}
Alternatively, let $z$ be a point on the annulus of surgery such that its orbit under the
flow $X_s$ will cross the surgery region infinitely often. As usual consider $z$ as the
identification of $p = (t,s, w)$ with
$F(p) = (t,s +f(w), w)$ if $\xi_0(F(p)) = a e^+ (F(p)) + b e^-(F(p))$ is in the preserved cone given by $ a> 0 , 0 \leq b \leq a$. Consider the first return, at
time $ t$, Then if $ q= \phi_t( F(p)) = (0,s_t, w_t)$, the image
of $\xi_0(F(p))$ by the linear tangent map is
\[ \xi_t(F(q))=  a'e^+( F(q)) + b' e^-(F(q)) +  c' X(F(q))\]
with
\[ a' = a\exp (t) + \frac{df}{dw} (w_t)a_0(a\exp(t)-b\exp(-t)),\]
where
\[\frac{\partial}{\partial s}(F(q)) =  a_0e^+( F(q)) + b_0 e^-(F(q)) +  c_0 X(F(q)) \]
so
$ a' \geq  a( \exp (t) + 2\frac{df}{dw} (w_t)a_0 sht) .$
This gives the desired inequality for the projected norm.
\end{remark}
\begin{remark}
We emphasize that the entropy-increase is manifested for \(X_{\mathit{HT}}\) and thus results from the surgery and not from the time-change that makes the flow contact.
\end{remark}

We are now ready to pursue the growth of periodic orbits.
\begin{proof}[Proof of \cref{THMLargerTopEnt}]
Abramov's formula \eqref{eqAbramov} with \(\displaystyle g\dfn\dfrac{c}{1\pm\dd h(X_{\mathit{HT}})}\) and \(\mu_g\) the normalized volume defined by \(\alpha_A\) gives
\[
\hmu(X_h)=\hmu(X_{\mathit{HT}})\int\dfrac{c}{1\pm\dd h(X_{\mathit{HT}})}\alpha\wedge\dd\alpha=\hmu(X_{\mathit{HT}}).
\]
Combined with our previous result, this gives
\begin{equation}\label{eqLiouvilleEntDefect}
\hmu(\varphi^t)=\hmu(X_h)=
\underbrace{\hmu(X_{\mathit{HT}})\ge\hmu(X)}_{\text{\cref{PRPsameLyapunov}}}=\hmu(g^t).
\end{equation}
This in turn yields a comparison of topological entropies:
\[
\rlap{$\underbrace{\phantom{\htop(g^t)=\hmu(g^t)}}_{\text{constant curvature}}$}\htop(g^t)=\rlap{$\overbrace{\phantom{\hmu(g^t)\le\hmu(\varphi^t)}}^{\eqref{eqLiouvilleEntDefect}}$}\hmu(g^t)\le\underbrace{\hmu(\varphi^t)<\htop(\varphi^t)}_{\text{\cref{THMMain}.\ref{itemBMMnotVolume}}}.\qedhere
\]
\end{proof}
\begin{proof}[Proof of \cref{THMLargerSharpGrowth}]
By \eqref{eqSharpCounting}, increased
cohomological pressure suffices:
\[
\overbrace{\hmu(g^t)\le P(g^t)\le\htop(g^t)}^{\text{\eqref{eqFanginequality}}}=\rlap{$\overbrace{\phantom{\hmu(g^t)\le\hmu(\varphi^t)}}^{\eqref{eqLiouvilleEntDefect}}$}\hmu(g^t)\llap{$\underbrace{\phantom{\htop(g^t)=\hmu(g^t)}}_{\text{constant curvature}}$}\le\underbrace{\hmu(\varphi^t)<P(\varphi^t)}_{\text{\cref{THMFangCohomPressure}}}.
\]
Of course, applying \eqref{eqFanginequality} on the right-hand side
reproves \cref{THMLargerTopEnt}.
\end{proof}

\section{Contact homology and its growth rate}\label{SECContHom}
\emph{Contact homology} is an invariant of the contact structure computed through a Reeb vector field and introduced in the vein of Morse and Floer homology by Eliashberg, Givental and Hofer in 2000~\cite{EGH00}.
The definition of contact homology is subtle and complicated. In this paper, we will consider it as a black box and only use the properties of contact homology described in \cref{THMFPCH}\rlap.\footnote{Plus, for \cref{PROPCH} we also use (and detail in the proof) an elementary and standard application of the computation of contact homology in the Morse--Bott setting.}

Roughly speaking, contact homology is the homology of a complex generated by Reeb periodic orbits of a (nice) contact form. Yet the homology does not depend on the choice of a contact form (but it depends on the underlying contact structure). Therefore a Reeb vector field provides us with information on contact homology and vice-versa. The differential of this complex ``counts'' rigid holomorphic cylinders in the symplectization $M\times \mathbb R $ of our contact manifold (this is the technical part of the definition). These cylinders are asymptotic to Reeb periodic orbits when the $\mathbb R$-coordinate of the cylinder tends to $\pm\infty$. Roughly speaking, if a rigid cylinder is asymptotic to $\gamma_\pm$ at $\pm\infty$, then it contributes $\pm1$ to the coefficient of $\gamma_-$ in the differential of $\gamma_+$. This can be seen as a generalization of the differential of Morse Homology where we ``count'' rigid gradient trajectories asymptotic to critical points of a Morse function.
In particular, this implies that the differential of a periodic orbit only involves periodic orbits in the same free homotopy class and with smaller period. Moreover, the complex is \emph{graded} and the differential decreases the degree by $1$ (here we will only use the parity of this grading). Computing this differential is usually out of reach without a strong control of homotopic periodic Reeb orbits.

Variants of contact homology can be defined by considering periodic orbits in specific free homotopy classes or periodic orbits with period bounded by a given positive real number $T$ (this operation is called a filtration). In the later situation, the limit $T\to\infty$ recovers the original homology. This procees is fundamental to gather information on the growth rate of Reeb periodic orbits.

We recall that, if\/ $\gamma$ is a non\-degenerate $T$-periodic orbit of the Reeb flow $\phi^t$ of\/ $(M,\xi=\ker(\alpha))$ and $p$ is a point on $\gamma$, the orbit $\gamma$ is said to be \emph{even} if the symplectomorphism $\dd\phi^T(p)\colon(\xi_p,\dd \alpha)\to(\xi_p,\dd \alpha)$ has two real positive eigenvalues, and \emph{odd} otherwise.

\begin{theorem}[Fundamental properties of cylindrical contact homology]\label{THMFPCH}
Let $(M,\xi)$ be a closed hypertight contact $3$-manifold, $\alpha_0$ a non\-degenerate contact form on \((M,\xi)\) and $\Lambda$ a set of free homotopy classes of\/ $M$,
\begin{enumerate}
\item Cylindrical contact homology $C\mathbb H^\Lambda_\mathrm{cyl}(\alpha_0)$ is a $\mathbb Q$-vector space. It can be of finite or infinite dimension. It is the homology of a complex generated by $R_{\alpha_0}$-periodic orbits in $\Lambda$.
\item\label{itemContHomDiffOddEven} The differential of an odd (resp. even) orbit contains only even (resp. odd) orbits.
\item If\/ $\alpha$ is another non\-degenerate contact form on $(M,\xi)$, then $C\mathbb H^\Lambda_\mathrm{cyl}(\alpha_0)$ and $C\mathbb H^\Lambda_\mathrm{cyl}(\alpha)$ are isomorphic.
\item There exists a filtered version $C\mathbb H^\Lambda_{\leq T}(\alpha_0)$ (for $T\geq 0$) of contact homology: the associated complex is generated only by periodic orbits in $\Lambda$ with period $\leq T$. Therefore, $C\mathbb H^\Lambda_{\leq T}(\alpha_0)$ is a $\mathbb Q$-vector space of finite dimension and
\[\dim\left(C\mathbb H^\Lambda_{\leq T}(\alpha_0)\right)\leq \sharp\left\{R_{\alpha_0}\text{-periodic orbits in } \Lambda \text{ with period} \leq T \right\}\nfd N_T^\Lambda(\alpha_0)\]
\item $(C\mathbb H^\Lambda_{\leq T}(\alpha))_T$ is a directed system and its direct limit is the cylindrical contact homology. Having a directed system means that for all\/ $T\leq T'$, there exists a morphism $\phi_{T,T'}\colon C\mathbb H^\Lambda_{\leq T}(\alpha_0)\longrightarrow C\mathbb H^\Lambda_{\leq T'}(\alpha_0) $ and
\begin{itemize}
\item $\phi_{T,T}=\Id$
\item if\/ $T_0\leq T_1 \leq T_2$, then $\phi_{T_0,T_2}=\phi_{T_1,T_2}\circ\phi_{T_0,T_1}$.
\end{itemize}
As $\lim C\mathbb H^\Lambda_{\leq T}(\alpha_0)=C\mathbb H^\Lambda(\alpha_0)$, there exist morphisms \[\phi_T\colon C\mathbb H^\Lambda_{\leq T}(\alpha_0)\longrightarrow C\mathbb H^\Lambda(\alpha_0)\] such that the following diagram commutes for $T\leq T'$
\begin{center}
\begin{tikzpicture}[shorten >=1pt,node distance=2.0cm,auto]

\node      (A)                       {$C\mathbb H^\Lambda_{\leq T}(\alpha_0)$};
\node      (B) [below right of=A]  {$C\mathbb H^\Lambda(\alpha_0)$};
\node      (C) [above right  of=B] {$C\mathbb H^\Lambda_{\leq T'}(\alpha_0)$};

\path[->] (A) edge              node        {$\phi_{T,T'}$} (C)
edge            node [swap] {$\phi_{T} $ } (B)
(C) edge              node        {$\phi_{T'} $ } (B);
\end{tikzpicture}
\end{center}
\item Let $\alpha=f\alpha_0$ be another non\-degenerate contact form. Assume $f>0$, and let $B$ be such that $1/B\leq f(m)\leq B$ for all\/ $m\in M$. There exist $C=C(B)$ and morphisms $\psi_T\colon C\mathbb H^\Lambda_{\leq T}(\alpha_0)\longrightarrow C\mathbb H^\Lambda_{\leq C T}(\alpha)$ such that the following diagram commutes
\begin{center}
\begin{tikzpicture}[shorten >=1pt, auto]
\node      (A)                       {$C\mathbb H^\Lambda_{\leq T}(\alpha_0)$};
\node      (B) [right =1.2cm of A]         {$C\mathbb H^\Lambda_{\leq CT'}(\alpha)$};
\node      (C) [below =1.1cm of A]         {$C\mathbb H^\Lambda_{\leq T'}(\alpha_0)$};
\node      (D) [right =1.2cm of C]         {$C\mathbb H^\Lambda_{\leq CT}(\alpha)$};

\path[->] (A) edge              node [swap] {$\phi_{T,T'}(\alpha_0)$} (C)
edge              node        {$\psi_{T} $ } (B)
(C)   edge              node        {$\psi_{T'} $ } (D)
(B) edge              node        {$\phi_{CT,CT'}(\alpha)$} (D);
\end{tikzpicture}
\end{center}
This defines a morphism of directed system.
\end{enumerate}
\end{theorem}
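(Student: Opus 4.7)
The plan is to take each of the six items as a consequence of the now-standard construction of cylindrical contact homology in the hypertight setting, where the absence of contractible periodic orbits (combined with the restriction to non-trivial free homotopy classes when needed) allows transversality for the moduli spaces of holomorphic cylinders without recourse to virtual perturbations. The relevant references are Eliashberg--Givental--Hofer \cite{EGH00}, together with the foundational work of Hofer and Bourgeois; in particular item (3), the invariance, is what makes the whole theory an invariant of the contact structure and is proved by constructing an exact symplectic cobordism between the symplectizations of \((M,\alpha_0)\) and \((M,\alpha)\) and counting holomorphic cylinders in it.

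For items (1) and (2), one sets up the chain complex as the \(\mathbb{Q}\)-vector space generated by good \(R_{\alpha_0}\)-periodic orbits in free homotopy classes of \(\Lambda\), graded by the parity of the Conley--Zehnder index; the differential decreases the grading by one, which immediately gives the odd/even dichotomy of item (2). For items (4) and (5), the key observation is that the symplectic action \(A(\gamma)=\int_\gamma\alpha_0\) coincides with the period of \(\gamma\), and holomorphic cylinders in the symplectization satisfy \(A(\gamma_+)\geq A(\gamma_-)\) by Stokes' theorem and positivity of \(\dd\alpha_0\) on \(J\)-complex tangent planes. Hence the subspace spanned by orbits of period \(\leq T\) is a subcomplex, the inclusions \(\phi_{T,T'}\) are chain maps, and the fact that every element of \(C\mathbb{H}^\Lambda_\mathrm{cyl}(\alpha_0)\) is represented by a cycle of finite action implies that the direct limit is the full cylindrical contact homology.

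The real technical content is item (6), which refines the invariance statement into an action-controlled morphism. Given \(\alpha=f\alpha_0\) with \(1/B\leq f\leq B\), one interpolates linearly on \(M\times\R\) between (a rescaling of) \(\alpha_0\) at \(-\infty\) and (a rescaling of) \(\alpha\) at \(+\infty\) to obtain an exact symplectic cobordism, choosing the rescaling constants so that the interpolation is monotone. Counting rigid \(J\)-holomorphic cylinders in this cobordism defines a chain map \(\psi_T\), and the crucial action estimate---that a holomorphic cylinder with a positive asymptote of \(\alpha_0\)-period \(\leq T\) has a negative asymptote of \(\alpha\)-period \(\leq CT\), with \(C=C(B)\)---is a direct application of Stokes' theorem together with the bound \(\alpha\leq B\alpha_0\) pointwise on vectors on which both forms take the same sign. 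Standard cobordism composition arguments then give the commutative diagram and the fact that \(\psi\) is a morphism of directed systems.

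The main obstacle is the rigorous foundation of the moduli counts, but this is precisely why we have restricted to \emph{hypertight} contact structures: periodic orbits in non-trivial free homotopy classes are automatically simple or covers of simple orbits which cannot bubble, and compactness of moduli spaces of cylinders with fixed asymptotes follows from SFT compactness together with the action bounds above. The remaining details---genericity of \(J\), gluing and orientations---are by now classical in the hypertight setting and are used here exactly as in \cite{Alves1,Alves3}.
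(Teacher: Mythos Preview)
The paper does not prove this theorem; it is explicitly treated as a black box, with a paragraph of attributions immediately following the statement: the basic construction is credited to \cite{EGH00}, the filtration to \cite{ColinHonda08}, the directed-system formulation to \cite{McLean2010} and \cite[Section~4]{Vaugon12}, and---crucially---the well-definedness of the moduli counts to \cite{Dragnev04} when $\Lambda$ consists of primitive classes and to \cite{Pardon} in general. So there is nothing to compare your argument against except that list of references, and your sketch is broadly in line with what those references do.

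That said, your final paragraph contains a genuine gap. Hypertightness rules out bubbling of finite-energy planes (since a plane would have a contractible asymptote), which is what gives compactness for cylinders; it does \emph{not} by itself give transversality. Your sentence ``periodic orbits in non-trivial free homotopy classes are automatically simple or covers of simple orbits which cannot bubble'' conflates two separate issues. Multiply-covered holomorphic cylinders are still present and still obstruct generic regularity of the $\overline{\partial}_J$-operator, regardless of hypertightness. This is precisely why the paper separates the two cases: when $\Lambda$ contains only \emph{primitive} free homotopy classes, every asymptote is simply covered, Dragnev's somewhere-injectivity argument \cite{Dragnev04} applies, and a generic $J$ suffices; for arbitrary $\Lambda$ (in particular for the covers that appear in \cref{prop_perturbation} and \cref{th_courbe_simple_polynom}) one must invoke Pardon's virtual fundamental cycles \cite{Pardon}. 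Your sketch would be accurate if you restricted to primitive $\Lambda$, but as written it overstates what classical methods deliver.
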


Contact homology was introduced by Eliashberg, Givental and Hofer~\cite{EGH00}. The filtration properties come from~\cite{ColinHonda08}. The description in terms of directed systems takes its inspiration from \cite{McLean2010} and is presented in \cite[Section 4]{Vaugon12}.
Though commonly accepted, existence and invariance of contact homology remain unproven in general. This has been studied by many people using different techniques. This paper uses only proved results and follows Dragnev and Pardon approaches. If\/ $\alpha$ is hypertight and $\Lambda$ contains only primitive free homotopy classes, the properties of contact homology described in \cref{THMFPCH} derive from \cite{Dragnev04} (see~\cite[Section 2.3]{Vaugon12}). In the general case, \cref{THMFPCH} can be derived from \cite{Pardon}. Cylindrical contact homology for hypertight contact forms (and possibly nonprimitive homotopy classes) and the action filtration are described in \cite[Section 1.8]{Pardon}. The case of a not hypertight contact form when there exists an hypertight contact form derives from the contact homology of contractible orbits \cite[Section 1.8]{Pardon} and our invariant corresponds to $CH^\mathrm{\Lambda}_\bullet$. Note that when computed through a hypertight contact form, $CH^\mathrm{contr}_\bullet$ is trivial and $CH^\mathrm{\Lambda}_\bullet$ is the cylindrical contact homology. In the not hypertight case, our invariants can be interpreted geometrically using augmentations. This viewpoint is described in \cite[Section 2.4 and Section 4]{Vaugon12}.

Combining the two commutative diagrams from \cref{THMFHC} and the invariance of contact homology we obtain the following inequality.

\begin{proposition}\label{PROPGR}
Let $\alpha_0$ and $\alpha=f\alpha_0$ be two non\-degenerate contact forms on $(M,\xi)$, where $M$ is a closed, $3$-dimensional manifold and $\xi$ is hypertight. Assume $f>0$, and let $B$ such that $1/B\leq f(m)\leq B$ for all\/ $m\in M$. Then
\[ N_L^\Lambda(\alpha)\geq \mathrm{rank}(\phi_L(\alpha)) \geq \mathrm{rank}(\phi_{L/C(B)}(\alpha_0))\]
for all\/ $L>0$.
\end{proposition}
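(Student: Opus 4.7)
My plan is to split the inequality into its two pieces, each of which reduces to one of the items of Theorem~\ref{THMFPCH}.

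The left inequality $N_L^\Lambda(\alpha)\geq \mathrm{rank}(\phi_L(\alpha))$ is tautological from item~(4): by construction, $C\mathbb H^\Lambda_{\leq L}(\alpha)$ is the homology of a complex whose generators are the $R_\alpha$-periodic orbits in $\Lambda$ of period at most~$L$, so
\[
\mathrm{rank}(\phi_L(\alpha))\;\leq\;\dim C\mathbb H^\Lambda_{\leq L}(\alpha)\;\leq\;N_L^\Lambda(\alpha).
\]

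For the right inequality, I would set $T=L/C(B)$ and bring in the comparison maps $\psi_T \colon C\mathbb H^\Lambda_{\leq T}(\alpha_0)\to C\mathbb H^\Lambda_{\leq L}(\alpha)$ from item~(6). The content of "$(\psi_T)_T$ is a morphism of directed systems" is that, passing to the direct limit, $(\psi_T)$ induces a well-defined linear map $\Psi\colon C\mathbb H^\Lambda(\alpha_0)\to C\mathbb H^\Lambda(\alpha)$ satisfying $\phi_L(\alpha)\circ\psi_T=\Psi\circ\phi_T(\alpha_0)$. Combining this with the invariance statement (3) identifies $\Psi$ with the canonical isomorphism between the two contact homologies; in particular $\Psi$ has full rank. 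Once this is in hand, the rank of the left side $\phi_L(\alpha)\circ\psi_T$ is at most $\mathrm{rank}(\phi_L(\alpha))$ (composition cannot increase rank), while the rank of the right side $\Psi\circ\phi_T(\alpha_0)$ equals $\mathrm{rank}(\phi_T(\alpha_0))$ because $\Psi$ is an isomorphism. This gives $\mathrm{rank}(\phi_L(\alpha))\geq \mathrm{rank}(\phi_{L/C(B)}(\alpha_0))$.

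The step which genuinely uses contact-geometric input, and which I expect to be the main obstacle to spelling out in detail, is the identification of the direct-limit map induced by $(\psi_T)$ with the invariance isomorphism from (3). The naturality encoded in item~(6) is a statement about chain-level cobordism maps associated to $\alpha=f\alpha_0$; that the resulting limit map coincides with the continuation-invariance isomorphism relies on the uniqueness (up to chain homotopy) of such cobordism maps, which is the substance of the Dragnev/Pardon arguments cited after the theorem. Modulo invoking this identification, the proposition is then a two-line diagram chase combined with the elementary rank inequalities above.
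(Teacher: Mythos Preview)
Your proposal is correct and matches the paper's approach: the paper states the proposition as a direct consequence of ``combining the two commutative diagrams'' from Theorem~\ref{THMFPCH} (items (5) and (6)) together with the invariance of contact homology (item (3)), and you have accurately spelled out this diagram chase. Your identification of the one nontrivial input---that the direct-limit map induced by the morphism of directed systems in (6) coincides with the invariance isomorphism of (3)---is exactly the point the paper delegates to the cited literature.
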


If\/ $C\mathbb H^\Lambda(\alpha_0)$ is well-understood, one can get an easier estimate.

\begin{corollary}\label{CORGR}
Let $\alpha_0$ and $\alpha=f\alpha_0$ be two non\-degenerate contact forms on $(M,\xi)$ where $M$ is a closed, $3$-dimensional manifold and $\xi$ is hypertight. Assume $f>0$, and let $B$ such that $1/B\leq f(m)\leq B$ for all\/ $m\in M$. If
\[C\mathbb H^\Lambda(\alpha_0)=\bigoplus_{R_{\alpha_0}\text{-Reeb periodic orbit $\gamma$ in }\Lambda}\mathbb Q\gamma\]
then,
$ N_L^\Lambda(\alpha)\geq N_{L/C(B)}^\Lambda(\alpha_0)$
for all\/ $L>0$.
\end{corollary}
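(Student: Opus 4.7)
The plan is to derive \cref{CORGR} directly from \cref{PROPGR} by showing that the freeness hypothesis on $C\mathbb{H}^\Lambda(\alpha_0)$ forces $\mathrm{rank}(\phi_T(\alpha_0)) = N_T^\Lambda(\alpha_0)$ for every $T>0$. Once this identity is in hand, substituting $T = L/C(B)$ into the chain of inequalities supplied by \cref{PROPGR} immediately yields the desired bound $N_L^\Lambda(\alpha) \geq N_{L/C(B)}^\Lambda(\alpha_0)$.

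First I would check that the hypothesis forces the differential of the underlying chain complex computing $C\mathbb{H}^\Lambda(\alpha_0)$ to vanish identically. Recall from \cref{THMFPCH} that the differential sends each generator, that is, each $R_{\alpha_0}$-periodic orbit in $\Lambda$, to a $\mathbb{Q}$-linear combination of generators lying in the same free homotopy class and with strictly smaller period. Hence if $\partial\gamma$ were nonzero for some generator $\gamma$, the classes of the $R_{\alpha_0}$-periodic orbits in $\Lambda$ would satisfy a nontrivial $\mathbb{Q}$-linear relation in $C\mathbb{H}^\Lambda(\alpha_0)$, contradicting the assumption that they form a basis. Therefore $\partial \equiv 0$ on the whole complex.

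Because the differential respects the period filtration, the truncated complex computing $C\mathbb{H}^\Lambda_{\leq T}(\alpha_0)$ is the subcomplex generated by those $R_{\alpha_0}$-periodic orbits in $\Lambda$ of period at most $T$, and its restricted differential is again zero. Consequently $\dim C\mathbb{H}^\Lambda_{\leq T}(\alpha_0) = N_T^\Lambda(\alpha_0)$, and the morphism $\phi_T(\alpha_0)$ sends each basis orbit to its class in the full homology. By the freeness hypothesis these images are linearly independent, so $\phi_T(\alpha_0)$ is injective and $\mathrm{rank}(\phi_T(\alpha_0)) = N_T^\Lambda(\alpha_0)$.

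Finally, applying \cref{PROPGR} and substituting the identity just established (with $T=L/C(B)$ on the right) yields
\[ N_L^\Lambda(\alpha) \geq \mathrm{rank}(\phi_L(\alpha)) \geq \mathrm{rank}(\phi_{L/C(B)}(\alpha_0)) = N_{L/C(B)}^\Lambda(\alpha_0), \]
which is exactly the claim. There is no substantive obstacle here: the only nontrivial input is \cref{PROPGR} itself, and the role of the freeness hypothesis is precisely to convert an abstract statement about the ranks of the comparison morphisms into an honest count of Reeb periodic orbits.
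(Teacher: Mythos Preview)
Your proposal is correct and follows exactly the route the paper intends: the corollary is stated immediately after \cref{PROPGR} without a separate proof, precisely because the freeness hypothesis is meant to be read as $\partial\equiv 0$, which makes $\phi_T(\alpha_0)$ injective and hence $\mathrm{rank}(\phi_T(\alpha_0))=N_T^\Lambda(\alpha_0)$, after which \cref{PROPGR} gives the conclusion. One minor remark: your first paragraph argues by contradiction that $\partial\gamma=0$, but the hypothesis already asserts that each orbit $\gamma$ represents a homology class, so each $\gamma$ is a cycle outright; the vanishing of $\partial$ is thus part of what the displayed equality means rather than something to be deduced.
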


In fact, one can derive another invariant of contact structures from these properties of contact homology. Two non\-decreasing functions $f\colon\mathbb R_+\to \mathbb R_+$ and $g\colon\mathbb R_+\to \mathbb R_+$ have the same \emph{growth rate type} if there exists $C>0$ such that
\[f\left(\frac{x}{C}\right)\leq g(x)\leq f(Cx)\]
for all~$x\in\mathbb R_+$ (for instance, a function grows exponentially is it is in the equivalence class of the exponential). The \emph{growth rate type of contact homology} is the growth rate of\/ $T\mapsto \mathrm{rank}(\phi_T)$. Two non\-degenerate contact forms associated to the same contact structure have the same growth rate type (by \cref{PROPGR}) and therefore, the growth rate type of contact homology is an invariant of the contact structure. The growth rate of contact homology was introduced in~\cite{BourgeoisColin05}. It ``describes'' the asymptotic behavior with respect to $T$ of the number of Reeb periodic orbits with period smaller than $T$ that contribute to contact homology. For a more detailed presentation one can refer to~\cite{Vaugon12}.

Colin and Honda's conjecture~\cite[Conjecture 2.10]{ColinHonda08} (see \cref{SIntro}) for the contact structures from \cref{THMMain}, and \cref{THMFHC} for non\-degenerate contact forms follow from
\begin{proposition}\label{PROPEvenOrbits}
Let $(M,\xi)$ be a compact contact 3-manifold and assume there exists a contact form $\alpha_0$ on $(M,\xi)$ whose Reeb flow is Anosov with orientable stable and unstable foliations. Then any $R_{\alpha_0}$-periodic orbit is even and hyperbolic.
\end{proposition}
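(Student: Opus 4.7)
The plan is to analyze the linearized Poincaré return map on the contact plane along a closed orbit, using the Anosov splitting together with the orientability hypothesis.

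For a contact Anosov flow one has $\xi = E^+ \oplus E^-$. Fix a closed $R_{\alpha_0}$-orbit $\gamma$ of period $T$ through a point $p$, and write $\phi^t$ for the Reeb flow. Since $\xi_p$ is transverse to the flow at $p$, the linearized Poincaré return map $P : \xi_p \to \xi_p$ coincides with $d\phi^T(p)|_{\xi_p}$, which is a $d\alpha_0$-symplectomorphism. Flow-invariance of the Anosov splitting forces $P$ to preserve the decomposition $\xi_p = E^+_p \oplus E^-_p$, so in a basis adapted to this splitting $P$ is a diagonal matrix $\operatorname{diag}(\lambda_+, \lambda_-)$ with $\lambda_+ \lambda_- = 1$ by symplecticity. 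The Anosov inequalities in \eqref{AnosovCondition} applied to iterates $\phi^{nT}$ force $|\lambda_+| > 1$ and $|\lambda_-| < 1$; in particular $1$ is not an eigenvalue of $P$, so $\gamma$ is nondegenerate and hyperbolic.

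It remains to show $\lambda_\pm > 0$. Here is where orientability enters: the hypothesis that $W^+$ is orientable gives a continuous nonvanishing section $e^+$ of the line bundle $E^+$, and similarly $e^-$ for $E^-$. Along the orbit write $d\phi^t(e^+(p)) = c(t)\, e^+(\phi^t(p))$; the function $c : [0,T] \to \R$ is continuous, satisfies $c(0) = 1$, and never vanishes because $d\phi^t$ is an isomorphism. By the intermediate value theorem $c > 0$ throughout $[0,T]$, and in particular $\lambda_+ = c(T) > 0$. The same argument with $e^-$ gives $\lambda_- > 0$. Combined with the Anosov bounds, this yields $\lambda_+ > 1 > \lambda_- > 0$, so $P$ has two real positive eigenvalues and $\gamma$ is even in the sense recalled just before \cref{THMFPCH}.

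There is no serious obstacle: hyperbolicity of each closed orbit is a formal consequence of the Anosov hypothesis, and the sole content of the orientability assumption is to exclude two negative Floquet multipliers via the continuity argument above. In essence, the flow acts on an oriented one-dimensional bundle by continuous, hence orientation-preserving, isomorphisms along each closed orbit.
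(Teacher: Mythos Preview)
Your proof is correct and follows essentially the same approach as the paper's: both use that the Anosov splitting $\xi=E^+\oplus E^-$ is preserved by $D\phi^T$ so the eigenvalues are real, and then invoke orientability of the strong foliations to conclude positivity. Your version is simply a more explicit rendering---spelling out the symplecticity constraint $\lambda_+\lambda_-=1$, the asymptotic argument for $|\lambda_+|>1$, and the intermediate-value argument for positivity---where the paper compresses all of this into two sentences.
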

Indeed, by \cref{PROPFoliationsOrientability}, one can apply \cref{PROPEvenOrbits} to $(M_S,\alpha_A)$. Note that $\alpha_A$ is hypertight as the Reeb flow is Anosov. Therefore, the differential in contact homology is trivial (\cref{THMFPCH}.\ref{itemContHomDiffOddEven}.) and for any set $\Lambda$ of free homotopy classes,
\[C\mathbb H_{\mathrm{cyl}}^\Lambda(\alpha_A)=\bigoplus_{R_{\alpha_A}\text{-Reeb periodic orbit $\gamma$ in }\Lambda}\mathbb Q\gamma.\]
Let $\alpha=f\alpha_A$ be non\-degenerate with $f>0$ and let $B$ be such that $1/B\leq f(m)\leq B$ for all\/ $m\in M$.
Applying \cref{CORGR} for $\Lambda=\{\rho\}$, we get
$ N_L^\Lambda(\alpha)\geq N_{L/C(B)}^\Lambda(\alpha_A)$
for any $L>0$. Using  the Barthelm\'e--Fenley estimates from \cite[Theorem~F]{BarthelmeFenley2} we obtain the desired logarithmic growth. This finishes the proof of \cref{THMFHC} in the nondegenerate case. Additionally, the number of periodic orbits of an Anosov flow in primitive homotopy classes grows exponentially with the period. Applying \cref{CORGR} for $\Lambda$ the set of all primitive free homotopy classes in $M_S$ proves the Colin--Honda conjecture for contact structures from \cref{THMMain} and non\-degenerate contact forms.

\begin{proof}[Proof of \cref{PROPEvenOrbits}]
By definition of stable and unstable foliations, $D\phi^T_{\vert \xi}(p)$ has real eigenvalues $\mu$ and $\frac{1}{\mu}$ and the associated eigenspaces are $E^+$ and $E^-$. As the strong stable foliation is orientable, the eigenvalues are positive. Thus $\gamma$ is even and hyperbolic
\end{proof}

\section{Orbit growth in a free homotopy class for degenerate contact forms}\label{SECOrbitGrowthFHC}

In this Section, we prove \cref{THMFHC} for degenerate contact form (the non\-degenerate case is explained in the previous section). The proof derives from the proof of \cite[Theorem~1]{Alves3}. Yet Alves' goal was to obtain one orbit with bounded period in some free homotopy class and not control the number of orbit in this class, and the following result is not explicit in \cite{Alves3}.

\begin{corollary}\label{CORExistencePO}
Let $(M,\xi)$ be a closed manifold and $\alpha_0$ an Anosov contact form on $(M,\xi)$. Let $\rho$ be a primitive free homotopy class of\/ $M$ such that \[C\mathbb H_\mathrm{cyl}^\rho(\alpha_0)=\bigoplus_{R_{\alpha_0}\text{-periodic orbit $\gamma$ in }\rho}\mathbb Q\gamma\neq\{0\}.\]
Then, for any contact form $\alpha=f_\alpha\alpha_0$ on $(M,\xi)$ and for any $R_{\alpha_0}$-periodic orbit of period $T$, there exists an $R_\alpha$-periodic orbit in $\rho$ of period $T'$ with $eT\leq T'\leq ET$ where $e=\min|f_\alpha|$ and $E=\max|f_\alpha|$.
\end{corollary}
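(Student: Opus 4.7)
The plan is to follow Alves' strategy \cite{Alves3}, with sharper bookkeeping of the action filtration so as to pin the period of the produced orbit down to the interval $[eT,ET]$. Assume first that $f_\alpha>0$ (otherwise replace $\alpha$ by $-\alpha$: this reverses the Reeb flow but preserves the set of closed orbits and their periods, so $e,E>0$). By \cref{PROPEvenOrbits} every $R_{\alpha_0}$-periodic orbit is even, so by \cref{THMFPCH}(\ref{itemContHomDiffOddEven}) the differential on the cylindrical complex of $\alpha_0$ vanishes. Hence for any $R_{\alpha_0}$-periodic orbit $\gamma_0\in\rho$ of period $T$, the class $[\gamma_0]$ is a nonzero generator of $CH^\rho_{\leq T}(\alpha_0)$ which is \emph{not} in the image of $CH^\rho_{<T}(\alpha_0)\to CH^\rho_{\leq T}(\alpha_0)$.

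For any constant $c>0$ the form $c\alpha_0$ has the same closed orbits as $\alpha_0$ but with periods multiplied by $c$, so $CH^\rho_{\leq L}(c\alpha_0)$ is identified with the subcomplex of $CH^\rho_{\mathrm{cyl}}(\alpha_0)$ generated by orbits of period $\leq L/c$. The inequalities $e\alpha_0\leq\alpha\leq E\alpha_0$ (interpreted via the coefficient $f_\alpha$) give rise to exact symplectic cobordisms on the trivial cylinder $M_S\times[0,1]$, with $E\alpha_0$ at the positive end and $\alpha$ at the negative end in one case, $\alpha$ at the positive end and $e\alpha_0$ at the negative end in the other. Following Pardon's construction \cite{Pardon}, these cobordisms induce action-decreasing chain maps on filtered contact homology
\[
\Psi^+_L\colon CH^\rho_{\leq L}(E\alpha_0)\to CH^\rho_{\leq L}(\alpha),\qquad \Psi^-_L\colon CH^\rho_{\leq L}(\alpha)\to CH^\rho_{\leq L}(e\alpha_0),
\]
and the composition $\Psi^-_L\circ\Psi^+_L$ is chain-homotopic to the chain map associated to the scaling cobordism from $E\alpha_0$ to $e\alpha_0$. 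Because both $E\alpha_0$ and $e\alpha_0$ have vanishing differential, this latter homology-level map sends $[\gamma_0]$ to $[\gamma_0]$ (compared via the scaling identification), and is nonzero.

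Now assume by contradiction that $R_\alpha$ has no periodic orbit in $\rho$ of period in $[eT,ET]$. Then $CH^\rho_{\leq ET}(\alpha)$ coincides with its subcomplex $CH^\rho_{<eT}(\alpha)$, so by the action-decreasing property the image of $\Psi^-_{ET}$ is contained in the image of $CH^\rho_{<eT}(e\alpha_0)\to CH^\rho_{\leq ET}(e\alpha_0)$. But the class $[\gamma_0]$, of action exactly $eT$ in $e\alpha_0$, is a primitive basis element of $CH^\rho_{\leq ET}(e\alpha_0)$ (again since the differential vanishes) and so is not in that image. This contradicts $\Psi^-_{ET}\Psi^+_{ET}[\gamma_0]=[\gamma_0]$, producing an $R_\alpha$-orbit in $\rho$ of period in $[eT,ET]$. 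Finally, if $\alpha$ is degenerate, approximate it by non\-degenerate forms $\alpha_n=f_n\alpha_0$ with $f_n\to f_\alpha$ in $C^\infty$; the nondegenerate case yields $R_{\alpha_n}$-orbits $\gamma_n\in\rho$ of periods $T_n\in[(\min f_n)T,(\max f_n)T]$, and Arzel\`a--Ascoli, applied using the uniform bound on periods and the $C^0$-convergence of the Reeb vector fields, extracts a limit $R_\alpha$-orbit of period in $[eT,ET]$; primitivity of $\rho$ prevents the limit from being a multiple cover of a shorter orbit, so it lies in $\rho$.

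The main technical obstacle is establishing the filtered cobordism chain maps $\Psi^\pm$ and the identity $\Psi^-\circ\Psi^+\simeq$ the scaling cobordism map at the level required; both are standard outputs of Pardon's framework applied to the trivial cylindrical cobordisms between constant rescalings of $\alpha_0$ and $\alpha$, but invoking them cleanly while keeping track of the free homotopy class $\rho$ (and the primitivity assumption needed to rule out multiple covers in the degenerate limit) is the step where care is required.
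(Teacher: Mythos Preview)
Your argument is correct but takes a genuinely different route from the paper. The paper follows Alves' neck-stretching strategy literally: for each $\epsilon>0$ it builds a single symplectic cobordism from $(E+\epsilon)\alpha_0$ to $(e-\epsilon)\alpha_0$ that contains a long symplectization piece $[-R,R]\times M$ of $\alpha$, shows that the induced endomorphism of $C\mathbb H^\rho_{\mathrm{cyl}}(\alpha_0)$ is the identity, and then lets $R\to\infty$ so that SFT compactness forces the holomorphic cylinder from $\gamma_{E+\epsilon}$ to $\gamma_{e-\epsilon}$ to break along an $R_\alpha$-orbit of period in $[(e-\epsilon)T,(E+\epsilon)T]$; a further Arzel\`a--Ascoli limit as $\epsilon\to0$ then yields the closed interval $[eT,ET]$. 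Your approach instead works entirely at the level of action-filtered contact homology: you split the cobordism into two pieces $E\alpha_0\to\alpha$ and $\alpha\to e\alpha_0$, use that the associated maps $\Psi^\pm$ are action-nonincreasing, and argue by contradiction that a gap in the $R_\alpha$-spectrum over $[eT,ET]$ would force the composite to miss the basis element $[\gamma_0]$ of action exactly $eT$ in $e\alpha_0$. This bypasses both neck-stretching and the auxiliary $\epsilon$-limit, landing directly on the closed interval. The trade-off is that you lean more heavily on Pardon's machinery (sharp action-filtered cobordism maps with $C=1$ rather than the looser $C(B)$ stated in \cref{THMFPCH}, plus functoriality of their composition), whereas the paper stays within the geometric SFT-compactness toolbox that Alves had already set up. Both are legitimate; yours is algebraically cleaner, the paper's is closer to being self-contained modulo \cite{Alves3}.
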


\begin{proof}[Proof of \cref{CORExistencePO}]
Fix $0<\epsilon<e$. Without loss of generality, we may assume $f_\alpha>0$.
We follow Alves' proof of Theorem 1 in \cite{Alves3} and consider $\alpha=f_\alpha\alpha_0$ on $(M,\xi)$ non\-degenerate. For any $R>0$, Alves constructs (Step 1) a symplectic cobordism $\mathbb R\times M_S$ between $(E+\epsilon)\alpha_0$ and $(e-\epsilon)\alpha_0$ which corresponds to the symplectization of\/ $\alpha$ on $[-R,R]\times M_S$, and a map
\[\Psi_R\colon C\mathbb H_\mathrm{cyl}^\rho((E+\epsilon)\alpha_0)\longrightarrow C\mathbb H_\mathrm{cyl}^\rho((e-\epsilon)\alpha_0)\] by counting holomorphic cylinders in the symplectic cobordism. As $C\mathbb H_\mathrm{cyl}^\rho(C\alpha_0)$ is canonically isomorphic to $C\mathbb H_\mathrm{cyl}^\rho(\alpha_0)$ for any $C>0$, $\Psi_R$ induces an endomorphism of\/ $C\mathbb H_\mathrm{cyl}^\rho(\alpha_0)$ and Alves proves this endomorphism is, in fact, the identity.

Let $\gamma$ be a $R_{\alpha_0}$-periodic orbit of period $T$. For any $C>0$, it induces a $R_{C\alpha_0}$ periodic orbit $\gamma_C$ of period $CT$. As \[C\mathbb H_\mathrm{cyl}^\rho(\alpha_0)=\bigoplus_{R_{\alpha_0}\text{-Reeb periodic orbit $\gamma$ in }\rho}\mathbb Q\gamma,\] $\Psi_R(\gamma_{E+\epsilon})=\gamma_{e-\epsilon}$ and therefore, there exists a holomorphic cylinder between $\gamma_{E+\epsilon}$ and $\gamma_{e-\epsilon}$. Now as $R$ tends to infinity (Step 2), SFT compactness (see \cite{Alves3}) shows that our family of cylinders breaks and a $R_\alpha$-periodic orbit $\gamma_\epsilon$ of period $T_\epsilon$ appears in a intermediate level. By construction $(e-\epsilon)T\leq T_\epsilon\leq (E+\epsilon)T$. Now, let $\epsilon$ tend to $0$ and use the Arzel\`a-Ascoli Theorem to obtain a $R_\alpha$ periodic orbit $\gamma'$ with period $T'$ such that $eT\leq T'\leq ET$.

If\/ $\alpha$ is degenerate (Step 4), there exists a sequence $(\alpha_n)_{n\in\mathbb N}$ of non\-degenerate contact forms converging to $\alpha$ and the Arzel\`a-Ascoli Theorem can again be applied to obtain the desired periodic orbit.
\end{proof}

\begin{proof}[Proof of \cref{THMFHC} for degenerate contact forms]
As $M_S$ is hyperbolic, there are $a_1, b_1,a_2,b_2>0$ such that
\[\frac{1}{a_2}\ln(T)-c_2\leq N_T^\rho(\alpha_A)\leq
a_1\ln(T)+c_1\]
for all\/ $T>0$ \cite[Theorem~F]{BarthelmeFenley2}.
Let $(\gamma_n)_{n\in\mathbb N}$ be a sequence of\/ $R_{\alpha_A}$-periodic orbits in $\rho$ of periods $(T_n)_{n\in\mathbb N}$ such that
\begin{itemize}
\item $\gamma_0$ is a $R_{\alpha_A}$-periodic orbit in $\rho$ with minimal period;
\item for all\/ $n\geq 0$, $\gamma_{n+1}$ is a $R_{\alpha_A}$-periodic orbit in $\rho$ with period $T_{n+1}>\frac{E}{e}T_n$ and such that there exists no periodic orbit of smaller period satisfying the same conditions.
\end{itemize}
By \cref{CORExistencePO}, for any $n\geq 0$, there exists a $R_\alpha$-periodic orbit $\gamma'_n$ of period $T'_n$ such that $eT_n\leq T'_n\leq ET_n$. Therefore,
$T'_n\leq ET_n <eT_{n+1}\leq T'_{n+1}$
for all\/ $n\geq 0$ and all the orbits $\gamma'_n$ are distinct. Thus, $N_{T'_n}^\rho(\alpha)\geq n+1$ for all\/ $n\geq 0$.

To control\/ $N_T^\rho(\alpha)$, we now estimate the growth of\/ $(T_n)_{n\in\mathbb N}$. By definition, for all\/ $n\geq 0$,
\[T_{n+1} = \min\left\{T\mid N_T^\rho(\alpha_0) \geq N_{E/eT_n}^\rho(\alpha_0)+1\right\}.\]
Therefore, if $T$ is such that
\[\frac{1}{a_2}\ln(T)-c_2=a_1\ln(E/eT_n)+c_1+1 \]
then $T_{n+1}\leq T$ and
\[T_{n+1}\leq \left(\frac{E}{e}T_n\right)^{a_1a_2}e^{(1+c_1+c_2)a_2}.\]
Therefore, there exist $a_3, c_3>1$ such that $T_{n+1}\leq (c_3 T_n)^{a_3}$ for all\/ $n\geq 0$.
Thus, there exists $c_4> 0$ such that
\[\ln(T_{n+1})\leq c_4 a_3^{n+1} \]
for all\/ $n\geq 0$ and there exists $c_5\in\mathbb R$ such that
\[\ln(\ln(T_{n+1}))\leq \ln(a_3)(n+1)+c_5\]
for all\/ $n\geq 0$. Now, if\/ $eT_{n-1}\leq T'_{n-1}\leq T\leq T'_n\leq ET_n$, then
\[N_T^\rho(\alpha)\geq n \geq \frac{1}{\ln(a_3)}\ln(\ln(T_{n}))-c_5\geq \frac{1}{\ln(a_3)}\ln(\ln(T))-c_6 \]
for some $c_6\in\mathbb R$. This proves \cref{THMFHC}.
\end{proof}

\begin{remark}
If\/ $a_1a_2=1$, one can get better estimates and obtain the same growth as in the non\-degenerate case.
\end{remark}

\section{Exponential growth of periodic orbits after surgery on a simple geodesic}\label{SECOrbitGrowth_simple_geodesic}

We now prove \cref{th_courbe_simple_entropie} using the following result by Alves. To state it, we first define the \emph{exponential homotopical growth} of cylindrical contact homology. Let $(M,\xi)$ be a closed contact manifold and $\alpha_0$ a hypertight contact form on $(M,\xi)$. For $T>0$, let $N_T^{\mathrm{cyl}}(\alpha_0)$ be the number of free homotopy classes $\rho$ of\/ $M$ such that
\begin{itemize}
\item all the $R_{\alpha_0}$-periodic orbits in $\rho$ are simply-covered, non\-degenerate and have period smaller than $T$;
\item $C\mathbb H_\mathrm{cyl}^\rho(\alpha_0)\neq 0$.
\end{itemize}

\begin{definition}[Alves~\cite{Alves1}]
The cylindrical contact homology of\/ $(M,\alpha_0)$ has \emph{exponential homotopical growth} if there exist $T_0\geq 0$, $a>0$ and $b\in\mathbb R$ such that, for all\/ $T\geq T_0$,
\[N_T^{\mathrm{cyl}}(\alpha_0)\geq e^{aT+b}.\]
\end{definition}

\begin{theorem}[Alves~\cite{Alves1}, Theorem 2]\label{th_alves}
Let $\alpha_0$ be a hypertight contact form on a closed contact manifold $(M,\xi)$ and assume that the cylindrical contact homology has exponential homotopical growth. Then every Reeb flow on $(M,\xi)$ has positive topological entropy.
\end{theorem}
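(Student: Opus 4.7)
The plan is to transfer exponential growth of free homotopy classes of periodic orbits from the distinguished Reeb flow of $\alpha_0$ to an arbitrary Reeb flow on $(M,\xi)$ via the invariance and filtration properties of cylindrical contact homology, and then convert this count into a lower bound on topological entropy through a Bowen-type separation argument in the universal cover.

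\textbf{Step 1 (transfer of the count).} Let $\alpha$ be any contact form with $\ker \alpha = \xi$; first assume $\alpha$ is nondegenerate. Write $\alpha = f\alpha_0$ with $f > 0$ and pick $B > 0$ with $1/B \le f \le B$. For each of the $N_T^{\mathrm{cyl}}(\alpha_0) \ge e^{aT+b}$ free homotopy classes $\rho$ counted in the hypothesis, $CH^{\rho}_{\mathrm{cyl}}(\alpha_0) \neq 0$ and every $R_{\alpha_0}$-orbit in $\rho$ has period $\le T$, so $\phi_T(\alpha_0)$ is already surjective onto this summand. Commutativity of the diagram in Theorem~\ref{THMFPCH}(6) together with invariance of $CH_{\mathrm{cyl}}^{\rho}$ forces $\phi_{C(B)T}(\alpha)$ to be nonzero, hence $CH_{\leq C(B)T}^{\rho}(\alpha) \neq 0$, so $\rho$ contains at least one $R_{\alpha}$-periodic orbit of period $\leq C(B)T$. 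Orbits in distinct free homotopy classes are distinct, so the number of primitive free homotopy classes containing an $R_{\alpha}$-periodic orbit of period $\leq S$ is at least $e^{(a/C(B))S + b'}$.

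\textbf{Step 2 (from exponentially many homotopy classes to positive entropy).} This is the main obstacle. The idea is to lift orbits to the universal cover $\widetilde M$ and exploit the fact that distinct free homotopy classes lift to orbit segments whose endpoints sit in distinct $\pi_1(M)$-translates of a fixed fundamental domain. Fix a basepoint and a finite generating set of $\pi_1(M)$ with word metric $|\cdot|$. A periodic orbit of period $\le S$ in the class $\rho$ corresponds to a conjugacy class $[g_\rho]$ in $\pi_1(M)$ whose minimal word length is bounded by $KS$ for some constant $K$ depending only on $\sup \|R_{\alpha}\|$ and the geometry of $M$. Combined with Step~1, this yields exponentially many conjugacy classes $[g_\rho]$ with $|g_\rho| \le KS$. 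Picking a point $x_\rho$ on each such orbit and lifting orbit segments of length $S$ starting at $x_\rho$, one produces an $(S,\varepsilon)$-separated set (in the Bowen sense for the flow of $R_{\alpha}$) whose cardinality grows exponentially in $S$, giving
\[
h_{\mathrm{top}}(R_{\alpha}) \;\ge\; \frac{a}{K\cdot C(B)} \;>\; 0.
\]
The technical heart is the separation lemma: two lifts of orbit segments whose endpoints lie in fundamental-domain translates corresponding to distinct conjugacy classes of bounded length must be uniformly separated in the Bowen metric for some $\varepsilon$ chosen smaller than the injectivity radius; this is where compactness of $M$ and the smoothness of the Reeb flow are used.

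\textbf{Step 3 (degenerate case).} If $\alpha$ is degenerate, approximate it by nondegenerate contact forms $\alpha_n = f_n\alpha_0$ with $f_n \to f$ in $C^\infty$, so the constants $B_n$ and $C(B_n)$ can be taken uniformly bounded. Step~2 gives a uniform exponential lower bound on the number of $R_{\alpha_n}$-periodic orbits of period $\le S$ lying in distinct free homotopy classes. By Arzel\`a--Ascoli, these orbits converge (along a subsequence) to $R_\alpha$-periodic orbits in the same classes, producing the same exponential count for $\alpha$; the Bowen separation argument of Step~2 then gives the same positive lower bound on $h_{\mathrm{top}}(R_\alpha)$.

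The hard part is the Bowen separation argument in Step~2: one must argue, using only smoothness of $R_{\alpha}$ and compactness of $M$, that conjugacy classes of bounded word length in $\pi_1(M)$ translate to quantitatively separated orbit segments in $\widetilde M$. Once this geometric lemma is in place, the conversion from exponential orbit count to positive topological entropy is a direct application of the variational/Bowen definition of $h_{\mathrm{top}}$.
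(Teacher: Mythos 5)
The paper does not actually prove this theorem: it quotes it from Alves~\cite{Alves1} and uses it as a black box to prove \cref{th_courbe_simple_entropie}. So there is no proof in the paper to compare against; what can be assessed is whether your argument is internally complete.

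Your Step~1 is correct and standard: the commutative diagrams of \cref{THMFPCH} plus invariance do transfer, for each class $\rho$ with $CH^{\rho}_{\mathrm{cyl}}(\alpha_0)\neq 0$ and all $\alpha_0$-orbits of period $<T$, to the existence of an $R_{\alpha}$-orbit in $\rho$ of period $\leq C(B)T$. Step~3 is also essentially fine (periods stay in a compact window bounded away from $0$, and free homotopy class is stable under $C^0$-small perturbation, so Arzel\`a--Ascoli preserves both the orbit and its class).

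The gap is Step~2, and it is genuine rather than merely ``technical.'' You assert that one point on each of the exponentially many orbits gives an $(S,\varepsilon)$-separated set, but this fails as stated when the orbits have incommensurate periods. Concretely, if $\gamma_1$ has period $T_1$ and $\gamma_2$ has period $T_2$ with $T_1\neq T_2$, the orbit segments $t\mapsto\phi^t(x_i)$ on $[0,S]$ close up at different times, so $\varepsilon$-Bowen-closeness for time $S$ only forces a free homotopy between (roughly) $\gamma_1^{\lfloor S/T_1\rfloor}$ and $\gamma_2^{\lfloor S/T_2\rfloor}$, together with short correction arcs. This does \emph{not} force $\gamma_1\sim\gamma_2$: in the universal cover it yields an equation $g_1^{n_1}=g_2^{n_2}$ (up to bounded error) with $n_1\neq n_2$, and in a general $3$-manifold group this does not imply $g_1$ and $g_2$ are conjugate. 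Moreover, waiting until a common multiple of the periods would require $S\sim T^2$, which destroys the exponential rate. The fix is not hard but must be stated: among the exponentially many classes of period $\leq T$, pigeonhole to a sub-window $[\tau,\tau+\delta]$ with $\delta$ so small that $\delta\cdot\sup\|R_\alpha\|$ is less than the injectivity radius (still exponentially many classes, up to a factor $T/\delta$); then the residual arcs and geodesic caps form a small contractible loop, and the short-geodesic homotopy does give $\gamma_1\sim\gamma_2$, hence honest $(T,\varepsilon)$-separation. As written, your ``separation lemma'' is asserted, not proved, and you yourself flag it as ``the technical heart''; but it \emph{is} the theorem---the transfer in Step~1 is routine once this lemma is available. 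Until the period-matching issue is addressed, the argument has a hole exactly where the mathematical content lives, so I cannot accept Step~2 as a proof.
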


If\/ $\rho$ is a free homotopy class containing only one $R_{\alpha_0}$-periodic orbit and if this orbit is simply-covered and non\-degenerate, it is a direct consequence of the definition of contact homology that $C\mathbb H_\mathrm{cyl}^\rho(\alpha_0)=\mathbb Q$. Therefore, to prove \cref{th_courbe_simple_entropie}, it suffices to prove the following propositions.

\begin{proposition}\label{prop_ht}
The contact form $\alpha_A$ is hypertight in $M_S$.
\end{proposition}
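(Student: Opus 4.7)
The plan is to show that no closed orbit of the Reeb vector field $R_{\alpha_A}$ is contractible in $M_S$. Since $R_{\alpha_A}$ is a positive reparametrization of the Handel--Thurston vector field $X_{\mathit{HT}}$ from \eqref{eqXh}, the two flows share their orbits, so it suffices to rule out contractible closed orbits for $X_{\mathit{HT}}$. Off the surgery region $X_{\mathit{HT}}$ coincides with $\partial/\partial t$ in the flow-box coordinates of \cref{SScontact}, hence with the geodesic vector field on $S\Sigma$; a closed $X_{\mathit{HT}}$-orbit on $M_S$ therefore corresponds to a finite cyclic concatenation of geodesic arcs on $S\Sigma$ whose endpoints on the surgery annulus are matched by the twist $F$ of \eqref{eqdefF}.

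The main idea is to work in the universal cover and exploit the simplicity of $\geodesic=\pi\circ\gamma$. In $\tilde\Sigma=\H^2$ the preimage of $\geodesic$ is a disjoint union of pairwise disjoint complete geodesics, so the preimage of the torus $\mathbb T$ above $\geodesic$ in $\tilde{S\Sigma}$ is a disjoint family of \emph{surgery strips} $\tilde{\mathbb A}_i$. Accordingly $\tilde M_S$ is obtained from $\tilde{S\Sigma}$ by cutting along each $\tilde{\mathbb A}_i$ and regluing via a lift of $F$, and the lifted field $\tilde X_{\mathit{HT}}$ on $\tilde M_S$ coincides with the lifted geodesic field on $\tilde{S\Sigma}$ away from these strips. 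A closed $X_{\mathit{HT}}$-orbit in $M_S$ is contractible if and only if it lifts to a closed trajectory of $\tilde X_{\mathit{HT}}$ in $\tilde M_S$.

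To rule out such a closed lift I would split into three cases. First, if the orbit avoids the surgery region entirely, it is a closed geodesic on $\Sigma$ whose lift to $\tilde{S\Sigma}$ is a bi-infinite geodesic and never closes. Second, if the orbit crosses strips with at least two distinct indices, then the pairwise disjointness of the $\tilde{\mathbb A}_i$ forces the lifted trajectory to trace a concatenation of geodesic arcs in $\tilde{S\Sigma}$ joining disjoint lifts of $\geodesic$; by standard hyperbolic geometry this is a quasi-geodesic in $\H^2$ that escapes to infinity and cannot close. Third, if the orbit repeatedly returns to a single strip $\tilde{\mathbb A}_i$, I would show that at each crossing the twist $F$ shifts the $s$-coordinate in a definite direction, so the cumulative fiber shifts are monotone and no closure is possible.

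The main obstacle is the third case: quantifying the accumulation of shifts across successive crossings of a single strip $\tilde{\mathbb A}_i$. This reduces to studying the first-return map of the geodesic flow to $\mathbb T$ composed with $F$, and the monotonicity should follow from the explicit form of $F$ in \eqref{eqdefF} together with the fact that $X_{\mathit{HT}}$ crosses the surgery annulus with a definite sign (it equals $\partial/\partial t$), so successive twists add rather than cancel. Once this monotonicity is established, the three cases jointly exclude contractible closed orbits and yield the hypertightness of $\alpha_A$.
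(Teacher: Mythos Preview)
Your approach differs from the paper's, which works directly in $M_S$ using the incompressible torus $\mathbb T_S$ over $\geodesic$. The paper classifies periodic Reeb orbits as contained in, disjoint from, or transverse to $\mathbb T_S$ (\cref{po_type}), puts a putative null-homotopy $H$ in general position with respect to $\mathbb T_S$, and simplifies the $1$-manifold $H^{-1}(\mathbb T_S)$ using $\pi_1$-injectivity (\cref{prop_H}): contractible circles are removed, and arcs with both endpoints on the orbit side are excluded by an elementary hyperbolic-surface argument. After this cleanup the null-homotopy either lives in $M_S\setminus\mathbb T_S\simeq S\Sigma\setminus\mathbb T$ (contradicting hypertightness of the geodesic flow) or has boundary behaviour incompatible with \cref{prop_H}. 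No trajectory analysis in the universal cover is needed, and the argument is indifferent to the sign of $q$.

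Your outline has genuine gaps. In Case~2 the quasi-geodesic claim is not justified: there is no a~priori lower bound on the lengths of the geodesic arcs between successive strips, and bounded jumps do not by themselves yield a quasi-geodesic. What actually makes your picture work is a combinatorial observation you do not state: since $F$ changes only $s$ and preserves $w$, each crossing carries the trajectory to the far side of the corresponding lift, and since the disjoint lifts of $\geodesic$ cut $\H^2$ into regions whose adjacency graph is a tree, the sequence of visited regions is a non-backtracking walk and cannot close. This same observation shows that your Case~3 is vacuous: once a geodesic arc leaves a lift transversally it cannot return to it, so any lifted orbit meeting one strip meets infinitely many distinct strips; the ``monotone fiber shift'' heuristic conflates the $s$-coordinate (arclength along $\geodesic$) with a fiber direction and does not itself obstruct closure. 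Finally, orbits contained in $\mathbb T_S$ (namely $\pm\geodesic$ and their covers) are not covered by your trichotomy; these are handled immediately by incompressibility of $\mathbb T_S$.
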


\begin{proposition}\label{prop_N'}
Let $(M_S,\alpha_A)$ be a contact manifold obtained after a contact surgery along a simple geodesic.
Let $N'_T(\alpha_A)$ be the number of free homotopy classes $\rho$ such that $\rho$ contains only one $R_{\alpha_A}$-periodic orbit and this orbit is simply-covered, non\-degenerate and of period smaller than $T$. Then, there exist $T_0\geq 0$, $a>0$ and $b\in\mathbb R$ such that, for all\/ $T\geq T_0$, $N'_T(\alpha_A)\geq e^{aT+b}$.
\end{proposition}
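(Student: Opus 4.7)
The strategy is to produce exponentially many primitive free homotopy classes of $M_S$, each containing a unique simply-covered nondegenerate $R_{\alpha_A}$-orbit, by exhibiting primitive closed geodesics of $\Sigma$ lying in the complement of a fixed neighborhood of the surgery geodesic $\geodesic$. Since $\alpha_A$ agrees with the canonical contact form of the geodesic flow outside the surgery chart $\Omega$, the Reeb field $R_{\alpha_A}$ coincides with the geodesic vector field $X$ there. Consequently, any primitive closed geodesic $c \subset \Sigma \setminus N_{\eta_0}(\geodesic)$, for a fixed small $\eta_0 > 0$, lifts to a closed $R_{\alpha_A}$-orbit $\hat c$ in $M_S$ whose period equals the length of $c$, is primitive (since $c$ is), and is nondegenerate (its Poincar\'e return map coincides with that of the Anosov geodesic flow on $S\Sigma$, which is hyperbolic).

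For the exponential count, cutting $\Sigma$ along the simple geodesic $\geodesic$ yields a compact hyperbolic surface $\Sigma_0$ with geodesic boundary (connected if $\geodesic$ is non\-separating, two components otherwise). Since $\Sigma$ has genus at least two, each component has strictly negative Euler characteristic. The interior geodesic flow admits a Markov coding of positive topological entropy, and standard symbolic-dynamics arguments (or the Prime Geodesic Theorem for hyperbolic surfaces with geodesic boundary) provide constants $a>0$, $b\in\R$, $T_0>0$ such that $\Sigma_0$ contains at least $e^{aT+b}$ primitive closed geodesics of length at most $T$ for $T\geq T_0$. All but a sparse family avoid $N_{\eta_0}(\geodesic)$, so an exponentially growing subfamily remains.

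The remaining task is topological: the classes $[\hat c]$ must be pairwise distinct in $\pi_1(M_S)$, and each must contain $\hat c$ as its only $R_{\alpha_A}$-orbit. The surgery torus $\T\subset M_S$ (image of the torus above $\geodesic$) is incompressible in $M_S$ for any non\-trivial $(1,q)$-twist, so the inclusion into $M_S$ of the complement in $S\Sigma$ of a neighborhood of $\Omega$ is $\pi_1$-injective. Distinct primitive closed geodesics of $\Sigma_0$ give distinct free homotopy classes in $S\Sigma$ (remark following \cref{THMMain}), hence also in $M_S$. For uniqueness within a class, suppose $\gamma'$ is another $R_{\alpha_A}$-orbit freely homotopic to $\hat c$. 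If $\gamma'$ avoids $\Omega$, it is an $X$-orbit and $\pi_1$-injectivity forces $\gamma'=\hat c$. If $\gamma'$ meets $\Omega$, one chooses a free-homotopy annulus from $\hat c$ to $\gamma'$ transverse to $\T$ and uses incompressibility of $\T$ to remove inessential intersection circles, confining $\gamma'$ to the part of $M_S$ cut off by $\T$ that contains $\hat c$, thereby reducing to the previous case.

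The principal obstacle is this last uniqueness step: ruling out Reeb orbits that traverse the surgery region yet are freely homotopic to an orbit disjoint from it. The cut-and-paste manipulation of free-homotopy annuli at $\T$ requires careful treatment, especially to accommodate both separating and non\-separating $\geodesic$. The method is modeled on Alves's argument in \cite{Alves1} for the separating case, which we adapt to cover both settings.
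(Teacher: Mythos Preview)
Your overall approach matches the paper's: produce exponentially many primitive closed geodesics of $\Sigma$ disjoint from $\geodesic$, observe that their lifts are $R_{\alpha_A}$-orbits disjoint from the surgery torus $\mathbb T_S$, and then invoke a uniqueness statement saying any such orbit is alone in its free homotopy class. The paper's counting is more elementary than yours: it picks two independent loops $\ell_1,\ell_2$ in $\Sigma\setminus\geodesic$, notes that distinct primitive words in $\ell_1,\ell_2$ give distinct free homotopy classes whose geodesic representatives are disjoint from $\geodesic$ (because geodesics minimize intersection number), and this already gives exponential growth. Your appeal to the Prime Geodesic Theorem for $\Sigma_0$ works too, but the extra requirement of avoiding a tubular neighborhood $N_{\eta_0}(\geodesic)$ is not needed; disjointness from $\geodesic$ itself suffices, and the time-change in $\Omega$ only affects periods by a bounded factor.

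There is, however, a genuine gap in your uniqueness sketch for the case where a competing orbit $\gamma'$ meets the surgery region. You write that removing inessential intersection circles from the homotopy annulus ``confines $\gamma'$ to the part of $M_S$ cut off by $\mathbb T$,'' but manipulating the homotopy cannot move $\gamma'$; if $\gamma'$ genuinely crosses $\mathbb T_S$ it still does so after you simplify $H$. The paper's argument is different and is the real content here: one studies the properly embedded 1-manifold $N=H^{-1}(\mathbb T_S)\subset[0,1]\times S^1$. After removing contractible circles (using incompressibility of $\mathbb T_S$), the key step is to show that $N$ can contain no arc with both endpoints on the side of a Reeb orbit transverse to $\mathbb T_S$. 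This uses that consecutive intersections of such an orbit with $\mathbb T_S$ bound a geodesic arc in $\Sigma$ with endpoints on $\geodesic$ but interior off $\geodesic$, and hyperbolicity of $\Sigma$ forbids homotoping such an arc rel endpoints into $\geodesic$. With this in hand, if $\hat c$ is disjoint from $\mathbb T_S$ and $\gamma'$ is transverse to it, $N$ would need arcs with endpoints on the $\gamma'$ side only, a contradiction; and if $\gamma'$ is also disjoint, an essential circle in $N$ (or its absence) forces a homotopy in $S\Sigma$ between distinct geodesics, again impossible. Your outline does not contain this mechanism, and the ``confining'' step as written does not go through.
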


Indeed, the exponential growth of\/ $N'_T(\alpha_A)$ with respect to $T$ induces the exponential homotopical growth of\/ $(M_S,\alpha_A)$ and we can apply \cref{th_alves}.

We now turn to the proofs of \cref{prop_ht} and \cref{prop_N'}.
In $S\Sigma$, $\mathbb T = \pi^{-1}(c)$ is a torus, and our surgery preserves this torus. Let $\mathbb T_S$ denote the associated torus in $M_S$. Van Kampen's Theorem tells us that $\mathbb T_S$ is $\pi_1$-injective.

To prove \cref{prop_N'}, we want to find free homotopy classes with only one periodic Reeb orbit. We will consider free homotopy classes containing a periodic orbit disjoint from $\mathbb T_S$ and prove there are enough of such classes. First, we describe Reeb periodic orbits and study the properties of free homotopies between them.

\begin{claim}\label{po_type}
There are three types of\/ $R_{\alpha_A}$-periodic orbits:
\begin{enumerate}
\item periodic orbits contained in $\mathbb T_S$, the only periodic orbits of this kind are $\geodesic$, $-\geodesic$ ($\geodesic$ with the reverse orientation) and their covers,
\item periodic orbits disjoint from $\mathbb T_S$, these orbits correspond to closed geodesics in $\Sigma$ disjoint from $\pi(\geodesic)$ (this includes multiply-covered geodesics),
\item periodic orbits intersecting $\mathbb T_S$ transversely.
\end{enumerate}
\end{claim}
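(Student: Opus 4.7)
The plan is to reduce the claim to two complementary local descriptions of $R_{\alpha_A}$, after which the three-case split becomes a short matter of geometry. Inside the surgery chart, the construction recalled in \cref{SScontact} gives $R_{\alpha_A}=X_{\mathit{HT}}/(1\pm \dd h(X_{\mathit{HT}}))$ with $X_{\mathit{HT}}=\partial/\partial t$ and $|\dd h(X_{\mathit{HT}})|<1$, so $R_{\alpha_A}$ is a positive time-change of $\partial/\partial t$ and is therefore everywhere transverse to the surgery annulus $\{t=0\}$, which is exactly $\T_S\cap\text{chart}$ (once we account for the identification $F$). Outside the chart, the surgery changes nothing: $M_S$ agrees with $S\Sigma$, $R_{\alpha_A}$ agrees with the geodesic vector field $X$, and $\T_S$ agrees with the fiber torus $\T=\pi^{-1}(\geodesic)$. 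I would begin by making these two pictures precise.

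Now let $\Gamma$ be any closed $R_{\alpha_A}$-orbit. If $\Gamma\subset\T_S$, transversality of $R_{\alpha_A}$ to $\T_S$ in the chart forbids $\Gamma$ from entering the chart, so $\Gamma$ is a closed $X$-orbit contained in $\T=\pi^{-1}(\geodesic)$; since $X$ is tangent to $\T$ at $v$ exactly when the underlying unit vector is tangent to $\geodesic$, i.e.\ only along the two embedded circles $\geodesic$ and $-\geodesic$, we get $\Gamma\in\{\geodesic,-\geodesic\}$ up to iteration (Type~1). If instead $\Gamma\cap\T_S=\emptyset$, the same transversality forbids any entry into the chart (an entry would necessarily cross $\{t=0\}=\T_S$), so $\Gamma$ is a closed $X$-orbit in $S\Sigma$ disjoint from $\T$, and $\pi(\Gamma)$ is a closed geodesic in $\Sigma$ disjoint from $\pi(\geodesic)$ (Type~2). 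Otherwise $\Gamma$ meets $\T_S$ without being contained in it, and I must show every intersection is transverse: inside the chart this is automatic since $R_{\alpha_A}$ is parallel to $\partial/\partial t$ while $\T_S=\{t=0\}$; outside the chart, $X$ is tangent to $\T$ only at vectors tangent to $\geodesic$, but the only $X$-orbits through such vectors are $\geodesic$ and $-\geodesic$, which are already accounted for in Type~1 and hence excluded. This furnishes Type~3 and closes the trichotomy.

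The main obstacle is the bookkeeping around the cut-and-paste: one must justify that $\T_S$ is in fact a smooth embedded torus in $M_S$ and that $\T_S\cap\text{chart}=\{t=0\}$. This reduces to the observation that the gluing map $F(s,w)=(s+f(w),w)$ from \eqref{eqdefF} fixes the $w$-coordinate, so the annulus $\{t=0\}$ on the $t<0$ side is identified by the diffeomorphism $F$ with the annulus $\{t=0\}$ on the $t>0$ side, producing a single embedded annulus in $M_S$ that joins smoothly with the remainder of $\T$ outside the chart to form $\T_S$. Once this is in place, the case analysis above proceeds without further calculation.
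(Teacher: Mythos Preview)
Your proof is correct and is precisely the argument the paper leaves implicit: the claim is stated without proof, the trichotomy being evident from the local descriptions of $R_{\alpha_A}$ (a positive time-change of $\partial/\partial t$ in the chart, equal to the geodesic field $X$ outside) and of $\mathbb T_S$ (the surgery annulus $\{t=0\}$ in the chart, equal to $\pi^{-1}(\geodesic)$ outside). Your bookkeeping on the gluing $F$ preserving $\{t=0\}$, on orbits in the flow-box chart having constant $(s,w)$ and hence necessarily crossing $\{t=0\}$, and on $X$ being tangent to $\mathbb T$ only along the two lifts of $\geodesic$, is exactly the natural way to make this rigorous.
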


Therefore, a free homotopy between two $R_{\alpha_A}$-periodic orbits can always be perturbed to be transverse to $\mathbb T_S$.

\begin{proposition}\label{prop_H}
Let $\delta_0,\delta_1$ be two smooth loops in $M_S$ and $H\colon  [0,1]\times S^1\to M_S$ be a free homotopy between $\delta_0$ and $\delta_1$ transverse to $\mathbb T_S$. $N\dfn H^{-1}(\mathbb T_S)$ is a smooth manifold of dimension $1$ properly embedded in $[0,1]\times S^1$. Therefore,
\begin{enumerate}
\item\label{p1} one can modify $H$ so that $N$ does not contain contractible circles,
\item if\/ $\delta_0$ is a $R_{\alpha_A}$-periodic orbit transverse to $\mathbb T_S$, $N$ does not contain a segment with end-points on $\{0\}\times S^1$.
\end{enumerate}
\end{proposition}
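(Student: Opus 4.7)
My plan is to deduce the smoothness statement from transversality and then handle (1) and (2) by successive innermost-disk-type modifications of $H$. For the preliminary assertion, since $H$ is transverse to the codimension-one submanifold $\mathbb T_S$, the preimage $N=H^{-1}(\mathbb T_S)$ is automatically a smooth $1$-submanifold; it is properly embedded by compactness of $[0,1]\times S^1$, so $\partial N\subset \{0,1\}\times S^1$, and the components of $N$ are embedded circles in the interior or embedded arcs whose endpoints are distributed between $\{0\}\times S^1$ and $\{1\}\times S^1$.

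For (1) I would run the classical innermost-disk argument. If $N$ contains a contractible circle, pick an innermost one $c$; it bounds a disk $\Delta\subset[0,1]\times S^1$ with $\operatorname{int}(\Delta)\cap N=\emptyset$. Then $H(c)\subset\mathbb T_S$ is null-homotopic in $M_S$ via $H|_\Delta$, so by the $\pi_1$-injectivity of $\mathbb T_S$ (noted in the paragraph just above the proposition via Van Kampen) it bounds a disk $\Delta'\subset\mathbb T_S$. Because $\mathbb T_S$ is two-sided in the orientable $3$-manifold $M_S$, one can modify $H$ on a tubular neighborhood of $\Delta$, keeping it fixed outside, so as to replace $H(\Delta)$ by a slight push-off of $\Delta'$ into the side of $\mathbb T_S$ that already contained $H(\operatorname{int}(\Delta))$. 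The resulting homotopy is still transverse to $\mathbb T_S$ and its preimage has one fewer contractible circle; finitely many iterations prove (1).

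For (2) I adapt the same strategy, now exploiting the Reeb structure of $\delta_0$. Assume $\alpha\subset N$ is an arc with both endpoints $e_1,e_2\in\{0\}\times S^1$, and choose $\alpha$ innermost in the sense that $\alpha$ cobounds a half-disk $D\subset[0,1]\times S^1$ with a subarc $\beta$ of $\{0\}\times S^1$ such that $\operatorname{int}(D)$ and $\operatorname{int}(\beta)$ are both disjoint from $N$ (this can be arranged by an induction on the number of endpoints of $N$ lying in $\operatorname{int}(\beta)$). Then $H(\beta)$ is a single Reeb trajectory arc of $\delta_0$ between two consecutive transverse intersections $p_1=H(e_1)$ and $p_2=H(e_2)$ of $\delta_0$ with $\mathbb T_S$, and $H(D)$ is a singular disk bounded by $H(\alpha)\cup H(\beta)$ whose interior lies on a single side of $\mathbb T_S$; in particular, since the arc $H(\beta)$ enters and then exits that side, $R_{\alpha_A}$ points into it at $p_1$ and out of it at $p_2$, so the two Reeb crossings have opposite signs. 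I then homotope $H$ in a tubular neighborhood $U$ of $D$, keeping it fixed on $\beta$ and outside $U$, so as to push $H(\alpha)$ off $\mathbb T_S$ into the side that contained $H(\operatorname{int}(D))$, and iterate.

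The main technical obstacle lies in this last step: one must arrange the push-off so that no new arc of $N$ with both endpoints on $\{0\}\times S^1$ appears near the fixed points $e_1,e_2$, at which $H$ is not allowed to change. The opposite-sign Reeb-crossing information at $p_1,p_2$ recorded above is exactly what makes this possible: it lets me choose the normal component of the perturbation along $\alpha$ compatibly with the direction of $R_{\alpha_A}$ at each $e_i$, so that locally near each endpoint the tangent direction of the new preimage points away from $D$ rather than back into it. With this control, each round of modification strictly decreases the number of half-moon arcs, and the iteration terminates, completing (2).
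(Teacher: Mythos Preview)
Your argument for (1) is correct and essentially the paper's own innermost-circle argument.

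For (2), however, you have misread the assertion. Part (2) is \emph{not} a modification statement: it claims that for \emph{every} homotopy $H$ transverse to $\mathbb T_S$ (with $\delta_0$ the given Reeb orbit), the preimage $N$ contains no arc with both endpoints on $\{0\}\times S^1$. The paper uses this in exactly that form---for instance, in the hypertightness proof, $\delta_1$ is a point, so $N\cap(\{1\}\times S^1)=\emptyset$, and the existence of any endpoint on $\{0\}\times S^1$ forces a half-moon arc; (2) then yields an immediate contradiction. Your conclusion ``after finitely many modifications there are no half-moons'' would not give this.

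Worse, your iterative modification cannot work even on its own terms. Since $\delta_0$ is a fixed periodic orbit, the set $\delta_0\cap\mathbb T_S$---hence the set of endpoints of $N$ on $\{0\}\times S^1$---is unchanged by any homotopy of $H$ rel $\{0\}\times S^1$. When you ``push $H(\alpha)$ off $\mathbb T_S$'' keeping $H$ fixed on $\beta$, the points $e_1,e_2$ remain in $N$, and new arcs of $N$ must emanate from them. In the key case $\delta_1\cap\mathbb T_S=\emptyset$ these arcs have nowhere to go except back to $\{0\}\times S^1$, so the number of half-moons cannot be driven to zero. Your opposite-sign observation at $p_1,p_2$ is correct but does not rescue the count.

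The paper's proof of (2) is a genuine contradiction argument that uses a geometric input you have not invoked: the manifold obtained by cutting $M_S$ along $\mathbb T_S$ is the same as $S\Sigma$ cut along $\mathbb T$. Given an innermost half-moon $D$ bounded by $c_0\subset N$ and $c_1\subset\{0\}\times S^1$, the half-disk $H|_D$ lifts to this cut manifold and hence to $S\Sigma$, giving a homotopy rel endpoints between the Reeb arc $\delta_0|_{c_1}$ and an arc in $\mathbb T$. Projecting to $\Sigma$, one obtains a homotopy rel endpoints between a geodesic arc (a piece of the closed geodesic underlying $\delta_0$) and an arc contained in the geodesic $\pi(\mathfrak c)$. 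Uniqueness of geodesic arcs in a hyperbolic surface then forces $\delta_0|_{c_1}$ to lie in $\mathbb T_S$, contradicting the assumption that $\delta_0$ meets $\mathbb T_S$ transversally. This is the missing idea.
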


\begin{proof}
Consider an innermost contractible circle $c_0$ in $N\subset [0,1]\times S^1$, $c_0$ bounds a disk $D_0$ in  $[0,1]\times S^1$. The image of\/ $c_0$ is contractible in $\mathbb T_S$ as $\mathbb T_S$ is $\pi_1$-injective. Therefore, there exists a continuous $G\colon  D_0\to \mathbb T_S$ such that $H_{|c_0}=G_{|c_0}$ and one can replace $H_{|D_0}$ by $G$ to obtain a new homotopy (still denoted by $H$ ) between $\delta_0$ and $\delta_1$. Now, consider a neighborhood $[-\nu,\nu]\times \mathbb T_S$ of\/ $\mathbb T_S$ in $M_S$ (with $\mathbb T_S\simeq \{0\}\times\mathbb T_S$) and a disk $D_1$ containing $D_0$ such that $H(D_1)\subset [0,\nu]\times \mathbb T_S$ and  $H(D_1\setminus D_0)\subset (0,\nu]\times \mathbb T_S$. One can perturb $H$ in $\mathrm{int}(D_1)$ so that $H(D_1)\subset (0,\nu]\times \mathbb T_S$. Performing this inductively on the contractible circles proves~\ref{p1}.

We now assume $\delta_0$ is an $R_{\alpha_A}$-periodic orbit transverse to $\mathbb T_S$. By contradiction, consider an innermost segment $c_0$ in $N$ with end-points on $\{0\}\times S^1$. The end-points of\/ $c_0$ correspond to consecutive intersection points of\/ $\delta_0$ with $\mathbb T_S$. Let $c_1$ be the segment in $\{0\}\times S^1$ joining these two end-points points and homotopic (relative to end-points) to $c_0$. By construction, there exists a homotopy $(\eta_t)_{t\in[0,1]}\colon  [0,1]\to M_S$ (relative to end-points) between $\eta_0=H(c_0)$ et $\eta_1=H(c_1)$ such that $\eta_t(s)\in \mathbb T_S$ if and only if\/ $t=1$ or $s=0,1$. Let $M'$ be the manifold with boundary obtained by cutting $M_S$ along $\mathbb T_S$. Note that $M'$ can also be obtained by cutting $S\Sigma$ along $\mathbb T$. The projection $M'\to M_S$ is injective in the interior of\/ $M'$, therefore $\eta_t(s)$ is well-defined in $M'$ if\/ $t\neq 0$ and $s\neq 0,1$. Thus, there exists a homotopy $\eta'_t$ in $M'$ lifting $\eta_t$. This homotopy induces a homotopy in $S\Sigma$ and, as a result, a homotopy in $\Sigma$ between a geodesic arc contained in $\pi(\geodesic)$ and a geodesic arc with end-points on $\pi(\geodesic)$. As $\Sigma$ is hyperbolic, this can only happen if our second geodesic arc is also contained in $\pi(\geodesic)$, a contradiction.
\end{proof}

\begin{proof}[Proof of \cref{prop_ht}]
By contradiction, assume there exists a free homotopy $H$ between $\delta$, a $R_{\alpha_A}$ periodic orbit, and a point $p\notin \mathbb T_S$. As $\mathbb T_S$ is $\pi_1$-injective, $\delta$ cannot be contained in $\mathbb T_S$. Without loss of generality we may assume that $H$ is transverse to $\mathbb T_S$ and apply \cref{prop_H}.

If\/ $\delta$ is disjoint from $\mathbb T_S$, then $N\subset [0,1]\times S^1$ (see \cref{prop_H}) can only contain circles parallel to the boundary. We will now prove that we can modify $H$ so that $N$ is empty. Let $c_0$ be the circle in $N$ closest to $\{0\}\times S^1$ and let $C$ be the closure of the connected component of\/ $([0,1]\times S^1)\setminus c_0$ containing $\{1\}\times S^1$. Then $H(c_0)$ is an immersed circle contractible in $\mathbb T_S$ and there exists a continuous map $G\colon C\to \mathbb T_S$ such that $G_{|c_0}=H$ and $G_{\{1\}\times S^1}$ is constant. We replace $H_{|C}$ with $G$ to obtain a new homotopy $H$. Now, consider a neighborhood $[-\nu,\nu]\times \mathbb T_S$ of\/ $\mathbb T_S$ in $M_S$ and a neighborhood $C_1$ of\/ $C$ such that $H(C_1)$ is contained in $[0,\nu]\times \mathbb T_S$. We can perturb $H$ so that $H(C_1)$ is contained in $(0,\nu]\times \mathbb T_S$. Therefore we may assume that $N$ is empty and $H$ is an homotopy in $M_S\setminus \mathbb T_S$. It induces an homotopy in $S\Sigma$, a contradiction as the periodic orbits are not contractible in $S\Sigma$.

Finally, we consider the case $\delta$ transverse to $\mathbb T_S$. In this case, $N$ has boundary points on $\{0\}\times \mathbb T_S$ but not on  $\{1\}\times \mathbb T_S$. This contradicts \cref{prop_H}.
\end{proof}

\begin{proposition}
If\/ $\delta$ is a $R_{\alpha_A}$-periodic orbit disjoint from $\mathbb T_S$, then the free homotopy class of\/ $\delta$ contains exactly one $R_{\alpha_A}$-periodic orbit.
\end{proposition}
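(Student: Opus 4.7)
The plan is to show that any $R_{\alpha_A}$-periodic orbit $\delta'$ freely homotopic to $\delta$ in $M_S$ must coincide with $\delta$. By \cref{po_type}, $\delta'$ is either disjoint from $\mathbb T_S$, contained in $\mathbb T_S$, or transverse to $\mathbb T_S$. I will rule out the two cases in which $\delta'$ meets $\mathbb T_S$ and then, in the remaining case, reduce the free homotopy to one entirely in $M_S\setminus\mathbb T_S$ that projects via $\pi$ to a free homotopy of oriented closed geodesics in $\Sigma$; hyperbolicity of $\Sigma$ will then force $\delta=\delta'$. The essential inputs are the two parts of \cref{prop_H}, together with the fact that $\bar\delta\dfn\pi(\delta)$ is a closed geodesic of $\Sigma$ disjoint from the simple closed geodesic $\pi(\geodesic)$, and therefore not freely homotopic in $\Sigma$ to any iterate of $\pi(\geodesic)$.

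If $\delta'$ is transverse to $\mathbb T_S$, take a free homotopy $H\colon[0,1]\times S^1\to M_S$ with $H|_{\{0\}\times S^1}=\delta'$ and $H|_{\{1\}\times S^1}=\delta$, transverse to $\mathbb T_S$, and with no contractible circles in $N\dfn H^{-1}(\mathbb T_S)$ (by \cref{prop_H}(1)). Since $\delta$ misses $\mathbb T_S$, every boundary point of $N$ lies on $\{0\}\times S^1$; since $\delta'$ transversely meets $\mathbb T_S$, $\partial N$ is non-empty; hence the components of $N$ carrying those boundary points are arcs with both endpoints on $\{0\}\times S^1$, contradicting \cref{prop_H}(2) applied with $\delta_0=\delta'$. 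If $\delta'\subset\mathbb T_S$, isotope $\delta'$ slightly off $\mathbb T_S$ inside a collar neighborhood to a loop $\delta''$ freely homotopic to $\delta'$ and disjoint from $\mathbb T_S$; then $\delta$ and $\delta''$ are freely homotopic with both disjoint from $\mathbb T_S$, and the argument of the next paragraph applied to the pair $(\delta,\delta'')$ yields a contradiction because $\pi(\delta'')$ is freely homotopic in $\Sigma$ to an iterate of $\pi(\geodesic)$ while $\bar\delta$ is not.

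So $\delta'$ is disjoint from $\mathbb T_S$. Take $H$ transverse to $\mathbb T_S$ and without contractible circles in $N$; since $\partial N=\emptyset$, the set $N$ is a disjoint union of non-contractible circles in the annulus $[0,1]\times S^1$, all parallel to $\{0\}\times S^1$. Assume for contradiction that $N\ne\emptyset$ and let $c_0$ be the circle closest to $\{0\}\times S^1$, bounding with it an annulus $A_0$. The interior of $A_0$ maps to a single component $P^{(i)}$ of $M_S\setminus\mathbb T_S$, and $\overline{P^{(i)}}$ is canonically identified with the corresponding piece of $S\Sigma$ cut along $\mathbb T$, because the surgery only alters the re-gluing along $\mathbb T_S$. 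Then $\pi\circ H|_{A_0}$ is a free homotopy in $\Sigma$ between $\bar\delta$ and a loop on $\pi(\geodesic)$, which is freely homotopic to $\pi(\geodesic)^k$ for some $k\in\Z$; the case $k=0$ forces $\bar\delta$ to be contractible (impossible) and the case $k\ne 0$ forces $\bar\delta=\pi(\geodesic)^{|k|}$ as unoriented geodesics (contradicting $\bar\delta\cap\pi(\geodesic)=\emptyset$). Hence $N=\emptyset$, so $H$ is a free homotopy in $M_S\setminus\mathbb T_S\subset S\Sigma$; projecting to $\Sigma$ identifies $\bar\delta$ with $\bar{\delta'}$ as oriented closed geodesics with the same multiplicity, so $\delta=\delta'$ as periodic orbits of the geodesic flow. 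The main obstacle is the innermost-annulus step: one must identify each piece of $M_S\setminus\mathbb T_S$ with the corresponding piece of $S\Sigma\setminus\mathbb T$ so that $\pi$ is available, after which hyperbolic uniqueness of geodesic representatives in a free homotopy class handles both the contradiction with $N\ne\emptyset$ and the final identification $\delta=\delta'$.
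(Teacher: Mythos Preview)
Your proof is correct and follows essentially the same strategy as the paper's: case-split on the type of $\delta'$ via \cref{po_type}, use \cref{prop_H} to rule out arcs and contractible circles in $N=H^{-1}(\mathbb T_S)$, and then reduce the remaining situation to uniqueness of geodesic representatives on the hyperbolic surface $\Sigma$ by identifying $M_S\setminus\mathbb T_S$ with $S\Sigma\setminus\mathbb T$. The only cosmetic difference is in the case $\delta'\subset\mathbb T_S$: the paper treats it directly (observing that $\{1\}\times S^1\subset N$ and then arguing as in the disjoint case), whereas you first push $\delta'$ off $\mathbb T_S$ to a nearby loop $\delta''$ and feed $(\delta,\delta'')$ into the disjoint-case argument; both routes lead to the same contradiction that $\bar\delta$ would be freely homotopic in $\Sigma$ to an iterate of $\pi(\geodesic)$.
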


\begin{proof}
By contradiction, consider a free homotopy $H$ from $\delta$ to $\delta_1$, a distinct $R_{\alpha_A}$-periodic orbit. Without loss of generality, we may assume that $H$ is transverse to $\mathbb T_S$ apply \cref{prop_H}

If\/ $\delta_1$ is disjoint from $\mathbb T_S$, then $N$ can only contain circles parallel to the boundary. If\/ $N$ is empty, $H$ induces a homotopy in $S\Sigma$ and therefore in $\Sigma$. Yet, two closed geodesics on a hyperbolic surface are not homotopic. This proves $N$ is not empty. Let $c_0$ be the circle in $N$ closest to $\{0\}\times S^1$ and $M'$ be the manifold with boundary obtained by cutting $M_S$ along $\mathbb T_S$. The homotopy $H$ induces a homotopy $G$ between $\delta$ and $H(c_0)\subset \mathbb T_S$. The homotopy $G$ lifts to $M'$ and therefore induces a free homotopy in $S\Sigma$ and, as a result, a free homotopy in $\Sigma$ between a closed geodesics and a loop contained in the geodesic $\pi(\geodesic)$. This can happen only if our first geodesic is a cover of\/ $\pi(\geodesic)$. Yet this implies $\delta\subset\mathbb T_S$, a contradiction.

If\/ $\delta_1$ is transverse to $\mathbb T_S$, the manifold $N$ is not empty and has end-points on $\{1\}\times S^1$ but cannot have end-points on $\{0\}\times S^1$. This contradicts \cref{prop_H}.

Finally, the case $\delta_1$ contained in $\mathbb T_S$ is similar to the case $\delta_1$ disjoint from $\mathbb T_S$. In this case, $N$ contains only circles parallel to the boundary and $\{1\}\times S^1$ is in $N$.
\end{proof}

\begin{proof}[Proof of \cref{prop_N'}]
If\/ $\pi(\geodesic)$ is nonseparating, by cutting $\Sigma$ along $\pi(\geodesic)$ we obtain a surface of genus  at least $1$ with two boundary components. Let $\ell_1$ and $\ell_2$ be two loops in $\Sigma\setminus \geodesic$ homotopically independent and with the same base-point. Then, any nontrivial word in $\ell_1$ and $\ell_2$ defines a nontrivial free homotopy class for $\Sigma$ and there exists a closed geodesic on $\Sigma$ representing this class. This $R_{\alpha_A}$-periodic orbit is always nondegenerate. Additionally, we may assume that the orbits associated to $\ell_1$ and $\ell_2$ are simply-covered.
If a word is not the repetition a smaller word, the associated orbit is therefore simply covered.
As $\ell_1$ and $\ell_2$ are independent all these geodesics are disjoint and their number grows exponentially with the period. Finally, these geodesics do not intersect $\geodesic$ as geodesics always minimize the intersection number.

If\/ $\pi(\geodesic)$ is separating, by cutting $\Sigma$ along $\pi(\geodesic)$ we obtain two surfaces of genus  at least $1$ with one boundary components. The proof is similar.
\end{proof}
\section{Coexistence of diverse contact flows---proof of \cref{th_courbe_simple_polynom}}\label{SMoreContactFlows}
\subsection{Dynamical properties of the periodic Reeb flow after surgery}
We now apply the general construction of contact surgery along a Legendrian curve described in \cref{SScontact} to the contact structure with contact form $\beta$ and periodic Reeb flow described in \cref{SIMoreContactFlows}. On the unit tangent bundle of a hyperbolic surface $\Sigma$, select a closed geodesic $\geodesic\colon S^1=\mathbb R/\mathbb Z\to\Sigma$, $s\mapsto\geodesic(s)$ and consider the Legendrian knot $\gamma$ obtained by rotating the unit vector field along $\geodesic$ by the angle $\theta=\pi/2$. Note that the Legendrian knot $\gamma$ is the same as in \cref{SSAnosov} (and is tangent to $H$). To obtain standard coordinates in a neighborhood of\/ $\gamma$ we first consider an annulus $A$ in $S\Sigma$ transverse to the fibers with coordinates $(s,w)\in S^1\times (-2\epsilon,2\epsilon)$ such that $\beta_{|A}=w\dd s$ and then flow along the Reeb vector field $R_\beta$ to obtain coordinates $(t,s,w)\in S^1\times A=N$ such that $\beta=\dd t + w\dd s$\footnote{These coordinates along $\gamma$ are different from the coordinates defined for the surgery associated to the contact form $\alpha$ as, for instance, the surgery annulus is different. It is possible to derive a contact form from $\beta$ on the surgered manifold using the coordinates and surgery associated to $\alpha$: write $\beta$ in local coordinates, compute $F^*\beta$ and interpolate using bump and cut-off functions. Unfortunately, this construction yields a complicated Reeb vector field. Note that the contact structure obtained this way is isotopic to $\ker(\beta_S)$. This can be proved as follows. First the two surgeries result in the same manifold. Moreover, a surgery can be described as the gluing of a solid torus on an excavated manifold. Therefore we just need to prove that the contact structures on the glued tori are the same. This derives from the classification of contact structures on $\mathbb D^3$ by Eliashberg. See \cite{MakarLimanov} for an application to the torus.} (to remain coherent with previous conventions our circles have different lengths, more precisely $t\in \mathbb R/2\pi \mathbb Z$ and $s\in \mathbb R/\mathbb Z$). Note that $N$ can be interpreted as the suspension of the annulus $A$ by the identity map.

Our non-trivial surgery is defined by a twist (shear) $F$ along $A$. We denote by $M_S$ the manifold $S\Sigma$ after surgery and by $N_S\subset M_S$ the manifold (with boundary) $N$ after surgery. Let $\beta_S$ be the contact form on $M_S$ as described in \cref{SScontact}. Note that $\beta$ and $\beta_S$ coincide outside $N$ and $N_S$ respectively. The manifold $N_S$ is the suspension of the annulus $A$ by the shear map $F$. Moreover, the map $p_S\colon N_S\to(-2\epsilon,2\epsilon)$ given by the $w$-coordinate is well-defined and is a trivial torus-bundle. For $w\in(-2\epsilon,2\epsilon)$, the torus $p_S^{-1}(w)$ is foliated by closed Reeb orbits if and only if
\[f\left(w\right)=2\pi \frac{p_w}{q_w}\in2\pi\mathbb Q\]
where $p_w$ and $q_w$ are coprime. In this situation the orbits of\/ $\frac{\partial}{\partial t}$ on $p_S^{-1}(w)$ are periodic of period $q_w$. The Reeb vector field is a renormalization of\/ $\frac{\partial}{\partial t}$ (see (\ref{eqXh})). Finally, let $\mathbb T = S^1\times S^1\times\{0\}$ in $N$ and $\mathbb T_S$ be its image in $M_S$. By van Kampen's theorem, this torus is incompressible. Therefore the contact form $\beta_S$ is hypertight. Note that if $f(w)\in2\pi\mathbb Q$ and  $f(w')\in2\pi\mathbb Q$ but $f(w)\neq f(w')$, the associated periodic orbits are not freely homotopic.

\subsection{Proof of \cref{th_courbe_simple_polynom}}

The contact form $\beta_S$ is degenerate and the renormalization from the surgery makes the direct study a bit harder. So, to estimate the growth rate of its contact homology, we will standardize and perturb our contact form.

For any $w\in(-2\epsilon,2\epsilon)$, the vector fields $\frac{\partial}{\partial t}+\frac{f(w)}{2\pi}\frac{\partial}{\partial s}$ and $\frac{\partial}{\partial s}$ generate circles in the torus $p_S^{-1}(w)$. These circles induce a trivialisation of\/ $N_S$.
Let $(\tau,\sigma, w)$ be the coordinates on $N_S$ associated to this trivialisation.
Without loss of generality, we may assume that the map $f$ defining the twist (shear) $F$ is constant on $(-2\epsilon,-\epsilon)\cup(\epsilon,2\epsilon)$, that $f'(w)\neq 0$ for any $w\in(-\epsilon, \epsilon)$ and that $f$ is invariant under reflection with respect to the point $(0,q/2)$. Therefore, for $w$ in $[-2\epsilon,-\epsilon]$, \[\beta_S=\dd\tau+w\dd\sigma\] and for $w$ in $[\epsilon,2\epsilon]$, \[\beta_S=\left(1+\frac{qw}{2\pi}\right)\dd\tau+w\dd\sigma.\]

\begin{lemma}
There exist smooth maps $h_0, k_0\colon  (-2\epsilon, 2\epsilon)\to\mathbb R$ such that \[\beta_0=h_0(w)\dd\tau+k_0(w)\dd\sigma\] is a contact form such that $\beta_0=\beta_S$ for $w$ close to $\pm2\epsilon$ and $R_{\beta_0}$ and $R_{\beta_S}$ are positively collinear on $N_S$. Therefore, $\beta_0$ and $\beta_S$ are isotopic (through contact forms).
\end{lemma}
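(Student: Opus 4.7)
The plan is to reduce the construction to a one-dimensional ODE. The key observation is that in the $(\tau,\sigma,w)$ trivialization the Reeb field of $\beta_S$ should be tangent to every torus $\{w=\text{const}\}$ and depend only on $w$. Once this is established, an ansatz $\beta_0=h_0(w)\dd\tau+k_0(w)\dd\sigma$ can be tailored so that its Reeb direction matches.

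First I would compute $\beta_S$ in these coordinates using $\partial_\tau=\partial_t+(f(w)/(2\pi))\partial_s$ and $\partial_\sigma=\partial_s$. On the interior region $\vert w\vert<\epsilon$ this locally gives $\beta_S=(1+wf(w)/(2\pi))\dd\tau+w\dd\sigma+(w\tau f'(w)/(2\pi))\dd w$, and the seemingly awkward $\tau$-dependence in the $\dd w$ coefficient drops out of the exterior derivative because of a cancellation between the $\dd h_S\wedge\dd\tau$ and $\dd l_S\wedge\dd w$ contributions. The result is $\dd\beta_S=(f(w)/(2\pi))\dd w\wedge\dd\tau+\dd w\wedge\dd\sigma$, from which one reads off $R_{\beta_S}=\partial_\tau-(f(w)/(2\pi))\partial_\sigma$, confirming the expected structure. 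For the ansatz $\beta_0=h_0(w)\dd\tau+k_0(w)\dd\sigma$, a direct calculation gives contact condition $h_0 k_0'-h_0' k_0\neq 0$ and Reeb field $R_{\beta_0}=(k_0'\partial_\tau-h_0'\partial_\sigma)/(h_0 k_0'-h_0' k_0)$, so positive collinearity with $R_{\beta_S}$ is equivalent to the single ODE $h_0'(w)=(f(w)/(2\pi))k_0'(w)$ together with the sign condition $k_0'/(h_0k_0'-h_0'k_0)>0$. Since $f\equiv 0$ on $(-2\epsilon,-\epsilon)$ and $f\equiv q$ on $(\epsilon,2\epsilon)$, the boundary data $(h_0,k_0)=(1,w)$ and $(1+qw/(2\pi),w)$ already solve this ODE there, so the problem reduces to an interpolation on $(-\epsilon,\epsilon)$.

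I would then simply take $k_0(w)=w$ on all of $(-2\epsilon,2\epsilon)$ and set $h_0(w)=1+\tfrac{1}{2\pi}\int_{-2\epsilon}^w f(u)\dd u$. The boundary match at $-2\epsilon$ is immediate; the match at $2\epsilon$ reduces, via the assumed invariance of $f$ under reflection about $(0,q/2)$, to the identity $\int_{-\epsilon}^\epsilon f=q\epsilon$, which holds automatically. The remaining task is the contact condition $h_0(w)-wf(w)/(2\pi)\neq 0$; integration by parts rewrites it as $1-\tfrac{1}{2\pi}\int_{-\epsilon}^w uf'(u)\dd u$, and the bound $\vert f'\vert\leq 4q/\epsilon$ makes this integral $O(\epsilon)$, so the expression stays positive provided $\epsilon$ is small. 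I expect this to be the main --- though mild --- obstacle, and it is consistent with the smallness of $\epsilon$ already imposed in \cref{SScontact}.

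Finally, for the isotopy through contact forms I would take the straight-line path $\beta_u\dfn u\beta_0+(1-u)\beta_S$ for $u\in[0,1]$. The same cancellation as before yields $\dd\beta_u=\dd\beta_S$ for every $u$, and a direct expansion gives
\[\beta_u\wedge\dd\beta_u=\bigl(u(wf(w)/(2\pi)-h_0(w))-(1-u)\bigr)\dd\tau\wedge\dd\sigma\wedge\dd w,\]
which by the integration-by-parts identity above equals $(-1+u\cdot O(\epsilon))\dd\tau\wedge\dd\sigma\wedge\dd w$. This is nonvanishing uniformly in $u\in[0,1]$ once $\epsilon$ is small, so every $\beta_u$ is a contact form and $\beta_0$ and $\beta_S$ are isotopic through contact forms.
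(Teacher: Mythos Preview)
Your proposal is correct and follows the same route as the paper: you take exactly the paper's choice $k_0(w)=w$, $h_0(w)=1+\tfrac1{2\pi}\int_{-2\epsilon}^w f$, verify the boundary match via $\int_{-\epsilon}^{\epsilon}f=q\epsilon$, check the contact condition for small $\epsilon$, and use the straight-line interpolation for the isotopy. One small oversight: your explicit expression for $\beta_S$ on $\lvert w\rvert<\epsilon$ is really the expression for $\beta=\dd t+w\,\dd s$ in the new coordinates and omits the surgery correction $\mp\dd h$; this is harmless since $\dd h$ is exact (so $\dd\beta_S=\dd\beta$ and the Reeb direction is unchanged) and in $\beta_S\wedge\dd\beta_S$ it only contributes the positive factor $1\pm\partial_t h$, so your sign analysis of $\beta_u\wedge\dd\beta_u$ survives. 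The paper, by contrast, bypasses this computation entirely by invoking the general principle that positively collinear Reeb fields force the linear interpolation to remain contact.
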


\begin{proof}
Let $h_0$ and $k_0$ be the maps defined by $k_0(w)=w$ and \[h_0(w)=1+\int_{-2\epsilon}^w f(u)/2\pi\dd u\] for $w\in (-2\epsilon,2\epsilon)$.
As $\int_{-\epsilon}^{\epsilon} f(u)d u = q\epsilon$,  $\beta_0=\beta_S$ for $w\in(\epsilon,2\epsilon)$. Moreover, the contact condition is
\[1+\int_{-2\epsilon}^w f(u)/2\pi\dd u -w f(w)/2\pi>0\]
and this condition is always satisfied for $\epsilon$ small enough.
Additionally, the Reeb vector field is positively collinear to $(k'_0(w),-h'_0(w))=(1,-f(w)/2\pi)$. Finally, as $R_{\beta_0}$ and $R_{\beta_S}$ are positively collinear, $(u\beta_S+(1-u)\beta_0)_{u\in[0,1]}$ is a contact isotopy.
\end{proof}

The contact form $\beta_0$ is degenerate. To estimate the growth rate of its contact homology, we have to perturb it. Our perturbation draws its inspiration from Morse--Bott techniques. To describe our perturbation, we need to fix some notations. The manifold $S\Sigma\setminus p^{-1}((-\epsilon,\epsilon))$ is a trivial circle bundle. Let $S'$ be a surface (with boundary) transverse to the fibers and intersecting each fiber once: $S'$ provides us with a trivialisation $S'\times S^1$ of\/ $S\Sigma\setminus p^{-1}((-\epsilon,\epsilon))$. The surface $S'$ has two boundary components. Let $\phi\colon  S'\to \mathbb R$ be a Morse function such that $\phi=0$ on the boundary of\/ $S'$ and, if\/ $q>0$ (resp. $q<0$), the connected component of\/ $\partial S'$ corresponding to $w=-\epsilon$ is a maximum (resp. a minimum) and the connected component corresponding to $w=\epsilon$ a minimum (resp. a maximum). For any $w$ such that $f(w)=2\pi p(w)/q(w)\in 2\pi\mathbb Q$, we denote by $P(w)$ the period of the $R_{\beta_0}$-periodic orbits foliating $p_S^{-1}(w)$. Note that there exists $C_P>0$ such that $q(w)/C_P\leq P(w)\leq C_P q(w)$, this implies that the number of torus with $w\in(-\epsilon,\epsilon)$ foliated by Reeb periodic orbits with period smaller than $L$ grows quadratically in $L$.

For a contact form $\alpha$, let $\sigma(\alpha)$ denote the action spectrum: the set of periods of the periodic orbits of\/ $R_\alpha$.

\begin{proposition}\label{prop_perturbation}
Let $T>0$, $T\notin \sigma(\beta_0)$. There exists $\beta'=l\beta_0$ with $l\colon  M_S\to\mathbb R_+$ arbitrarily close to 1 such that
\begin{itemize}
\item $\beta'$ is hypertight and nondegenerate
\item the periodic orbits of\/ $R_{\beta'}$ with period $\leq T$ are exactly:
\begin{enumerate}
\item the fibers associated to the critical points of\/ $\phi$ and their multiple of multiplicity $\leq \left\lfloor \frac{T}{2\pi} \right\rfloor$
\item for all\/ $w\in(-\epsilon,\epsilon)$ such that $P(w)<T$, two orbits in $p_S^{-1}(w)$ and their multiple with multiplicity $\leq \big\lfloor \frac{T}{P(w)} \big\rfloor$
\end{enumerate}
\item if\/ $\delta$ is a $R_{\beta'}$-periodic orbit of period $\leq T$ then all the $R_{\beta'}$-periodic orbit in the free homotopy class of\/ $\delta$ are periodic orbits of period $\leq T$.
\end{itemize}
\end{proposition}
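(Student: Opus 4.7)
The plan is a Morse--Bott style perturbation of $\beta_0$, handled separately on the ``outside region'' $M_S \setminus p_S^{-1}((-\epsilon,\epsilon))$ and on finitely many rational tori inside $N_S$. Under the hypotheses $f' \neq 0$ on $(-\epsilon, \epsilon)$ and $T \notin \sigma(\beta_0)$, only finitely many $w_1, \ldots, w_N \in (-\epsilon, \epsilon)$ satisfy $f(w_i) \in 2\pi\mathbb{Q}$ and $P(w_i) \leq T$, and each torus $p_S^{-1}(w_i)$ forms a $1$-dimensional Morse--Bott family of closed $R_{\beta_0}$-orbits of period $P(w_i)$. The outside region is a $2$-dimensional Morse--Bott family, since $R_{\beta_0}$ restricts there to the free $S^1$-action of period $2\pi$ on the trivial bundle $S' \times S^1$.

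I would take $\beta' = l\beta_0$ with $l = 1 + \eta\tilde\phi + \sum_{i=1}^N \eta_i \tilde h_i$ for small positive generic $\eta, \eta_i$. On a saturated open neighborhood $V$ of the outside region, let $\tilde\phi$ be the $R_{\beta_0}$-invariant extension of $\phi$ (constant along the fibers), tapered smoothly to zero as $|w|$ decreases past $\epsilon$. For each $i$, pick a Morse function $h_i\colon p_S^{-1}(w_i)/R_{\beta_0} \cong S^1 \to \mathbb{R}$ with exactly one maximum and one minimum, lift to an $R_{\beta_0}$-invariant function on $p_S^{-1}(w_i)$, and extend to a small saturated neighborhood $U_i$ by making it independent of $w$; arrange the $U_i$ to be pairwise disjoint and disjoint from $V$ (possible since the $w_i$ are isolated in $(-\epsilon,\epsilon)$). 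By making $\eta, \eta_i$ as small as desired one ensures $l > 0$ is arbitrarily close to $1$, $\beta'$ is a positive contact form smoothly isotopic to $\beta_0$ (hence to $\beta_S$), and $R_{\beta'}$ is $C^\infty$-close to $R_{\beta_0}$.

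Verification combines the standard local Morse--Bott analysis (as in \cite{BourgeoisColin05}) with the incompressibility of the tori $p_S^{-1}(w)$. Locally near each perturbed Morse--Bott family, the closed orbits of $R_{\beta'}$ of period $\leq T$ are precisely the lifts of the critical points of the corresponding Morse function together with their iterates up to the stated multiplicity, all nondegenerate. Outside $V \cup \bigcup_i U_i$ the flow of $R_{\beta_0}$ has no closed orbit of period $\leq T$---irrational tori carry quasi-periodic orbits; rational tori not on our list have $P(w) > T$---so the hypothesis $T \notin \sigma(\beta_0)$ together with a return-map compactness argument rules out such closed orbits for the $C^1$-nearby $R_{\beta'}$ as well. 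Hypertightness of $\beta'$ follows from its isotopy to $\beta_S$ via \cref{prop_ht}. For the free homotopy class clause, the simply covered critical orbits on the various $p_S^{-1}(w_i)$ realize primitive homology classes with pairwise distinct winding numbers on the incompressible tori, all distinct from the fiber class of the orbits in $V$; a long orbit sharing the conjugacy class of a short one would, by $C^0$-closeness to some $R_{\beta_0}$-orbit in that class, have to lie on the same Morse--Bott family, contradicting its length.

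The main obstacle is precisely the last point: ruling out a long orbit of $R_{\beta'}$ that traverses $N_S$ and returns while remaining in the conjugacy class of one of the listed short orbits. Handling this relies on combining the invariance of the free homotopy class of periodic orbits under small $C^1$-perturbations with the incompressibility / van Kampen analysis of \cref{prop_H}, which forces any homotopy through the tori $p_S^{-1}(w)$ to preserve the relevant winding invariants and thereby separates the conjugacy classes of the short orbits from those of any unwanted long orbit.
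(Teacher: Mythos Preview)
Your overall Morse--Bott scheme is the same as the paper's, but two steps contain genuine gaps.

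First, hypertightness does not follow from an isotopy of contact forms to $\beta_S$. Hypertightness (absence of contractible closed Reeb orbits) is a property of the \emph{form}, not of the underlying contact structure, and is not preserved under such isotopies. Your citation of \cref{prop_ht} is also misplaced: that proposition concerns $\alpha_A$ in a different section. What actually gives hypertightness is that the specific perturbation keeps the Reeb flow tangent to the incompressible tori $p_S^{-1}(w)$ inside $N_S$ and to the non-contractible fibers outside, so every closed orbit represents a nontrivial class; but this requires the structural control described next, not an abstract isotopy argument.

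Second, and more seriously, the free-homotopy clause is not justified. The claim that ``a long orbit sharing the conjugacy class of a short one would, by $C^0$-closeness to some $R_{\beta_0}$-orbit in that class, have to lie on the same Morse--Bott family'' is unfounded: $C^1$-closeness of vector fields gives no $C^0$-control over \emph{long} orbits, and there is no general ``invariance of free homotopy class under small $C^1$-perturbations'' for orbits of unbounded period. Neither \cref{prop_H} (which treats a different flow and a different torus) nor a compactness argument rescues this. The paper sidesteps the problem by designing the perturbation so that the flow of $\beta_\lambda=(1+\lambda\phi)\beta_0$ remains tangent to every torus $p_S^{-1}(w)$ in a buffer zone $N_S'=p_S^{-1}\bigl((-\epsilon,-\epsilon+\nu]\cup[\epsilon-\nu,\epsilon)\bigr)$: the extension of $\phi$ is taken to depend on $w$ \emph{alone} there, with $\phi'(w)\neq 0$ of the sign matching $h_0'$. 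One then reads off the slope of the perturbed flow on each torus explicitly; with $\nu$ chosen (depending on $T$) so that every rational torus in $N_S'$ already has denominator $q(w)>C_PT$, any closed orbit in the buffer zone has both period $>T$ and a slope on an incompressible torus distinct from that of every listed short orbit, hence a different free homotopy class. Inside $p_S^{-1}([-\epsilon+\nu,\epsilon-\nu])$ the torus Morse--Bott perturbation creates extra long orbits only in classes of high multiples. Your ``tapering'' does not specify this $w$-only dependence or the choice of $\nu$, and without the resulting slope control there is no mechanism to separate the free homotopy classes of long orbits from those of the listed short ones.
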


\begin{proposition}\label{PROPCH}
If\/ $\delta$ is a simply-covered $R_{\beta'}$-periodic orbit of period $\leq T$ of the second type in \cref{prop_perturbation}, then \[C\mathbb H_{\text{cyl}}^{[\delta]}(M,\ker(\beta_0))=\mathbb Q^2.\]
\end{proposition}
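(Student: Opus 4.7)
The plan is to reduce the computation of $C\mathbb H_\mathrm{cyl}^{[\delta]}$ to a local Morse--Bott model on the torus $T\dfn p_S^{-1}(w_0)$ carrying $\delta$. For $\beta_0$ the torus $T$ is foliated by simply-covered closed Reeb orbits of common period $P(w_0)$, forming a Morse--Bott family of Reeb orbits parametrized by the quotient $B\dfn T/S^1\cong S^1$. The perturbation $\beta'=l\beta_0$ from \cref{prop_perturbation} breaks the degeneracy using an auxiliary Morse function on $B$ with exactly one maximum and one minimum, and the two simply-covered $R_{\beta'}$-orbits on $T$ (the ``second type'' orbits at this particular $w_0$) are exactly the orbits sitting over those two critical points. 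Since $\ker(\beta')=\ker(\beta_0)$ and $\beta'$ is non\-degenerate and hypertight, the cylindrical contact homology in the class $[\delta]$ can be computed from the subcomplex generated by $R_{\beta'}$-orbits in $[\delta]$.

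First I would pin down the generators of that subcomplex by showing that the only $R_{\beta'}$-periodic orbits in $[\delta]$ are the two orbits on $T$. By the third bullet of \cref{prop_perturbation}, any orbit freely homotopic to $\delta$ has period at most $T$, hence appears in the explicit enumeration of the second bullet. Fibers of type~(1) lie in the free homotopy class of the $S^1$-fiber direction, whereas orbits on $T$ have a nontrivial transverse component to the fibers, so no power of a fiber is freely homotopic to $\delta$. Orbits on any other torus $p_S^{-1}(w')$ with $w'\neq w_0$ have a different rational slope and hence a different free homotopy class, by the remark following the definition of $\beta_S$. Finally, since $\delta$ is simply covered the class $[\delta]$ is primitive, so no nontrivial multiple cover of any of the listed orbits can represent it. This isolates precisely the two orbits on $T$ as the generators in class $[\delta]$.

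Next I would invoke the Morse--Bott/cascade formalism for cylindrical contact homology (as set up in \cite{Pardon}, cf.\ also \cite{Vaugon12}) to identify the resulting two-generator complex with the Morse complex of $B=S^1$ for the chosen Morse function. By the isolation argument of the previous paragraph, SFT compactness forces every holomorphic cylinder contributing to $\partial$, in the small-perturbation limit, to degenerate into a cascade of trivial symplectization cylinders over $T$ joined by negative gradient trajectories of the Morse function on $B$: no ``escape'' to another orbit family in $[\delta]$ is possible because there is no other such family. The standard Morse function on $S^1$ has exactly two gradient trajectories from its maximum to its minimum, and a coherent orientation argument shows they cancel, so the differential is identically zero. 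Hence
\[
C\mathbb H_\mathrm{cyl}^{[\delta]}(M_S,\ker(\beta_0))\;\cong\;H_*(S^1;\mathbb Q)\;=\;\mathbb Q^2.
\]

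The main obstacle is the clean invocation of the Morse--Bott/cascade machinery: one must justify, using the transversality and compactness apparatus of \cite{Pardon}, that in the perturbation limit the differential on the two generators is computed by the Morse differential on $B$ and not by some ``non-local'' cylinder. Once that framework is in place the cancellation of the two gradient trajectories on $S^1$ is automatic, so the heart of the argument is really the topological isolation step above together with the identification of the local model as the Morse complex of $S^1$.
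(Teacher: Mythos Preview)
Your proposal is correct and takes essentially the same approach as the paper: both identify the Morse--Bott family on $p_S^{-1}(w_0)$ as the unique family in $[\delta]$ and reduce the cascade differential to the Morse differential on $S^1$, yielding $H_*(S^1;\mathbb Q)=\mathbb Q^2$. The paper phrases the key step a bit more directly---since all $R_{\beta_0}$-orbits in $[\delta]$ lie on $p_S^{-1}(w_0)$ and have \emph{equal} period, no nontrivial holomorphic cylinder can appear in any cascade, so the differential collapses immediately to the Morse--Witten differential for $\phi_{w_0}$---and cites Dragnev~\cite{Dragnev04} and Bourgeois~\cite{Bourgeois02} rather than Pardon for the cascade foundations.
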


\begin{proof}[Proof of \cref{prop_perturbation}]
There exists $\nu>0$ such that for any $w\in(-\epsilon,-\epsilon+\nu]\cup[\epsilon-\nu,\epsilon)$, if\/ $f(w)=2\pi p(w)/q(w)\in 2\pi\mathbb Q$ then $q(w)>C_PT$. Let \[N_S'=p_S^{-1}((-\epsilon,-\epsilon+\nu]\cup[\epsilon-\nu,\epsilon)).\] Let $S''$ be a smooth surface in $M_S$ with boundary obtained by adding to $S'$ two annuli in $N_S$, transverse to $R_{\beta_0}$ and projecting to $[-\epsilon,-\epsilon+\nu]\cup[\epsilon-\nu,\epsilon]$. We can therefore endow $S''\setminus S'$ with coordinates $(s',w')$ such that $w'$ lifts $w$. We now perturb $\phi$ and extend it to $S''$ so that $\phi(s',w')=\phi(w')$ on $S''\setminus S'$, $\phi'(w')\neq 0$ for all\/ $w'\in [-\epsilon,-\epsilon+\nu)\cup(\epsilon-\nu,\epsilon]$, $\phi$ is flat (all its derivative are equal to $0$) for $w=\pm(\epsilon-\nu)$ and the critical points of\/ $\phi$ are unaltered. Finally, we extend $\phi$ to $M_S$ to obtain a smooth function, $R_{\beta_0}$-invariant and such that $\phi\equiv 0$ in $N_S\setminus N_S'$.

Let $\beta_\lambda=(1+\lambda \phi)\beta_0$. This is a standard Morse--Bott perturbation (see \cite[Lemma 2.3]{Bourgeois02}) in $M_S\setminus p_S^{-1}((-\epsilon,\epsilon))$, therefore, for $\lambda\ll1$, the periodic orbits in this area correspond to the critical points of\/ $k$.

In the coordinates $(\tau,\sigma,w)$, we have
\[\beta_\lambda=(1+\lambda \phi(w))(h_0(w)\dd \tau+k_0(w)\dd \sigma). \]
Therefore, in these coordinates, the Reeb vector field is positively collinear to
\[\left((1+\lambda \phi(w))k_0'(w)+\lambda \phi'(w)k_0(w)\right)
\frac{\partial}{\partial \tau}-\left((1+\lambda \phi(w))h_0'(w)+\lambda \phi'(w)h_0(w)\right) \frac{\partial}{\partial \sigma}.\]
The $\sigma$-coordinate is nonzero as $\phi$ and $h$ have the same monotonicity.
For $\lambda\ll1$, the $\sigma$-coordinate is close to $-h'_0(w)$, the $\tau$-coordinate to $k'_0(w)$ and $R_{\beta_\lambda}$ is close to $R_{\beta_0}$.
Therefore, for $\lambda\ll1$, if there is a $R_{\beta_\lambda}$-periodic orbit in $N'_S$, this orbit has slope $2\pi p'(w)/q'(w)\in2\pi\mathbb Q$ with $q'(w)>C_PT$.
Thus there are no periodic orbit with period smaller than $T$ in $N'_S$ and the periodic orbits with period bigger than $T$ are not in the free homotopy classes of orbits with period smaller than $T$ as described in \cref{prop_perturbation}.

In $p_S^{-1}([-\epsilon+\nu,\epsilon-\nu])$, the periodic orbits with period $\leq T$ are contained in tori $p_S^{-1}(w)$ such that $P(w)\leq T$. These tori are foliated by periodic orbits. Morse--Bott techniques apply here and give the second type of periodic Reeb orbits: for any such $w$ we perturb $\beta$ in a neighborhood of\/ $p_S^{-1}(w)$ with a function derived from a Morse function $\phi_w$ defined on $p_S^{-1}(w)/\text{Reeb flow} = S^1$ and the periodic orbits after perturbation correspond to the critical points of\/ $\phi_w$. For a given $w$ we obtain two orbits (one associated to the maximum of\/ $\phi_w$ and one associated to the minimum of\/ $\phi_w$), their covers and some orbits with period bigger than $T$ and in the free homotopy class of arbitrarily large covers of our two simple orbits. This perturbation derives from \cite[Lemma 2.3]{Bourgeois02} and is described for tori in \cite[Section 3.1]{Vaugon12}.

Lastly, standard perturbation techniques prove there exists an arbitrarily small perturbation of\/ $\beta_\lambda$ with the following properties:
\begin{itemize}
\item it gives rise to a nondegenerate contact form,
\item it does not change the periodic orbits with period smaller than $T$,
\item it does not create periodic orbits of period bigger than $T$ in the free homotopy classes of orbits of period smaller than $T$.\qedhere
\end{itemize}
\end{proof}
\begin{proof}[Proof of \cref{PROPCH}]
Let $\delta\in p_S^{-1}(w)$ be a $R_{\beta'}$-periodic orbit of period $\leq T$ of the second type in \cref{prop_perturbation}. Then the $R_{\beta_0}$-periodic orbit in the class $[\delta]$ are exactly the orbits in $p_S^{-1}(w)$ (and all these orbits have the same period). As $\delta$ is simply-covered, Dragnev's~\cite{Dragnev04} results can be applied. Additionally, standard perturbations do not create contractible periodic Reeb orbits. Therefore, the differential for contact homology can be described using ``cascades'' from Bourgeois' work~\cite{Bourgeois02}. The case of a unique torus of orbit is explained in \cite[Section 9.4]{Bourgeois02}. The cascades used to describe the differential in this degenerate setting mix holomorphic cylinders between orbits and gradient lines for $\phi_w$ in $p_S^{-1}(w)/\text{Reeb flow} = S^1$ (for some generic metric). As all periodic orbits in this class have the same period, there is no homolorphic cylinder in the cascade and the differential coincides with the Morse--Witten differential for $\phi_w$ (ie  the differential associated to Morse homology). Therefore, cylindrical contact homology in the free homotopy class $\rho$ is $2$-dimensional. The cascades of Morse--Bott homology are explicitly described in~\cite{BourgeoisOancea} (in a slightly different setting).
\end{proof}

\begin{proof}[Proof of \cref{th_courbe_simple_polynom}]
Let $\beta'=f\beta$ be a nondegenerate hypertight contact form and $B$ be such that $1/B<f<B$. Let $(T_i)_{i\in\mathbb N}$ be an increasing sequence such that $\lim_{i\to+\infty}T_i=+\infty$ and $T_i\notin\sigma(\beta_0)$. For all\/ $i\in\mathbb N$, let $\beta_i=f_i\beta$ be the contact form given by \cref{prop_perturbation} for $T=T_i$.
We may assume, \[\frac{1}{B}<\frac{f_i}{f}<B\] as $f_i$ is arbitrarily close to 1. By \cref{prop_perturbation},
\[\dim\left(C\mathbb H_{T_i}(\alpha_i)\right)\leq \left\lfloor \frac{T_i}{2\pi} \right\rfloor C + 2\sum_{w, P(w)\leq T_i} \left\lfloor \frac{T_i}{P(w)} \right\rfloor\]
where $C$ is the number of critical points of\/ $k$ and
\[\sum_{w,  P(w)\leq T} \left\lfloor \frac{T}{ P(w)} \right\rfloor= O(T_i^2). \]

In addition, we have the following commutative diagram (see \cref{THMFPCH}),
\begin{center}
\begin{tikzpicture}[shorten >=1pt,node distance=3cm, auto]
\node      (A)                       {$C\mathbb H_{\leq T_i/C(B)}(\beta')$};
\node      (B) [right =1.1cm of A]         {$C\mathbb H_{\leq T_i}(\beta_i)$};
\node      (C) [below =1.2cm of A]         {$C\mathbb H(\beta')$};
\node      (D) [below =1.2cm of B]         {$C\mathbb H(\beta_i)$};

\path[->] (A) edge              node [swap] {$\phi_{T_i/C(B)}(\beta')$} (C)
edge              node        {} (B)
(C)   edge              node        {} (D)
(B) edge              node        {$\phi_{T_i}(\beta_i)$} (D);
\end{tikzpicture}
\end{center}
thus
\[rk(\phi_{T_i/C(B)}(\beta'))\leq rk(\phi_{T_i}(\beta_i))\leq \dim\left(C\mathbb H_{T_i}(\beta_i)\right) \leq a_1 (T_i^2).\]
A symmetric commutative diagram implies
\[rk(\phi_{T_i/C(B)}(\beta_i))\leq rk(\phi_{T_i}(\beta'))\]
Propositions \ref{prop_perturbation} and \ref{PROPCH} prove that $\phi_{T_i/C(B)}$ is injective on the class of simply-covered periodic orbits of the second type (as defined in~\cref{prop_perturbation}). Therefore $ rk(\phi_{T_i/C(B)}(\beta_i))\geq a_0 T_i^2$ and the growth rate of contact homology is quadratic.
\end{proof}

\end{document}